\documentclass{article}
\usepackage[T1]{fontenc}
\usepackage[utf8]{inputenc}
\usepackage[english]{babel}
\usepackage[margin=3cm]{geometry}
\usepackage{amsmath, amssymb}
\usepackage{amsfonts}
\usepackage{dsfont}
\usepackage{float}
\usepackage{graphicx}
\usepackage{wrapfig}
\usepackage{mathtools}
\usepackage{bbm}
\usepackage{amsthm}
\usepackage{ifthen}
\usepackage{graphicx}
\usepackage{hyperref}
\usepackage[ruled,vlined]{algorithm2e}
\usepackage{xcolor}
\usepackage{mathtools}
\usepackage{empheq}

\newtheorem{theorem}{Theorem}[section]
\newtheorem{prop}[theorem]{Proposition}
\newtheorem{definition}[theorem]{Definition}
\newtheorem{remark}{Remark}[section]
\newtheorem{lemma}[theorem]{Lemma}
\newtheorem{Cl}{Claim}[section]

\newcommand{\ca}[1]{\mathcal{#1}}
\newcommand{\bb}[1]{\mathbb{#1}}
\newcommand{\p}{\mathbb{P}}

\newcommand{\comillas}[1]{``#1''}
\newcommand{\set}[1]{\left\{#1\right\}}
\newcommand{\parent}[1]{\left(#1\right)}

\newcommand{\Rd}{\bb{R}^d}
\newcommand{\R}{\bb{R}}

\newcommand{\ind}[1]{\mathbbm{1}_{#1}}
\newcommand{\esp}[1]{\bb{E}\barras{#1}}

\newcommand{\gb}[1]{\overline{\widehat{#1}}}
\newcommand{\barras}[1]{\left| #1 \right|}
\newcommand{\integral}{\int_{t_i}^{t_{i+1}}}
\newcommand{\ug}[1]{\widehat{\ca{U}}_{#1}}
\newcommand{\vg}[1]{\widehat{\ca{V}}_{#1}}
\newcommand{\norm}[1]{\left\lVert#1\right\rVert}
\newcommand{\xscheme}[1]{X_{t_{#1}}^{\pi}}
\newcommand{\prom}[1]{\langle #1 \rangle}

\begin{document}

\title{Deep Learning Schemes For Parabolic Nonlocal Integro-Differential Equations}
\author{Javier Castro\footnote{address: Departamento de Ingenier\'{\i}a Matem\'atica, Universidad de Chile, Casilla
170 Correo 3, Santiago, Chile. email: jcastro@dim.uchile.cl}
\footnote{J.C.'s work was funded in part by Chilean research grants FONDECYT 1191412, and CMM Conicyt PIA AFB170001.}}

\numberwithin{equation}{section}

\maketitle

\begin{abstract}
In this paper we consider the numerical approximation of nonlocal integro differential parabolic equations via neural networks. These equations appear in many recent applications, including finance, biology and others, and have been recently studied in great generality starting from the work of Caffarelli and Silvestre \cite{CS}. Based in the work by Hure, Pham and Warin \cite{DBS}, we generalize their Euler scheme and consistency result for Backward Forward Stochastic Differential Equations to the nonlocal case. We rely on L\`evy processes and a new neural network approximation of the nonlocal part to overcome the lack of a suitable good approximation of the nonlocal part of the solution.
\end{abstract}

\tableofcontents
\section{Introduction}

A difficult problem in Applied Mathematics is to approximate solutions of Partial Differential Equations (PDEs) in large dimensions. In low dimensions such as $1,2$ or $3$, classical methods such as finite differences or finite elements are commonly applied, with satisfactory convergence orders (see e.g. Allaire \cite[Chapters 2 and 6]{allaire}). An important problem appears when dealing with high dimensional problems such as \emph{portfolio management}, where each dimension represents the size of some financial derivative in the portfolio. More complications appear when the PDE is nonlocal, as present in many applications. For finite difference methods, one needs to construct a mesh that, computationally speaking, has exponential cost on the dimension $d\in \bb{N}$ of the considered PDE. This problem is known in the literature as \emph{the curse of dimensionality}, and the most common attempt to solve this issue is via stochastic methods. Deep Learning (DL) methods have proven to be an efficient tool to handle this problem and to approximate solutions of high dimensional second order fully nonlinear PDEs. This is achieved by finding that the solution of the PDE, evaluated at some diffusion process, solves an Stochastic Differential Equation (SDE); then an Euler scheme together with DL is applied to solve the SDE, see \cite{intro5,DBS} for key developments.\\

Without being exhaustive, we present some of the current developments in this direction. First of all, Monte Carlo algorithms are an important approach to the resolution of this dimensional problem. This can be done by means of the classical Feynman-Kac representation, that allows us to write the solution of a linear PDE as an expected value, and approximate high dimensional integrals with an average over simulations of some random variables. The key developments in this area can be found in Han-Jentzen-E \cite{HJE} and Beck-E-Jentzen \cite{intro5}. On the other hand, Multilevel Picard method (MLP) is another approach and consist on interpreting the stochastic representation of the solution to a semilinear
parabolic (or elliptic) PDE as a fixed point equation. Then, by using Picard iterations together with Monte Carlo methods for calculating some important integrals, one is able to approximate the solution to the PDE, see \cite{intro1, intro2} for fundamental advances in this direction. On the other hand, the so-called Deep Galerkin method (DGM) is a DL approach used to solve quasilinear parabolic PDEs plus boundary and initial conditions. The cost function in this framework is defined in an intuitive way, and consists of the differences between the approximation solution $\hat u$ evaluated at the initial time and spatial boundary, with the true initial and boundary conditions, plus the value of the equation evaluated at $ \hat u$. These quantities are captured by an $L^2$ norm, which in high dimensions is minimized using the stochastic gradient descent method. See \cite{intro3} for the development of the DGM and \cite{intro4} for an application.  \\

In \cite{DBS}, the principal source of inspiration of this article, Hure, Pham, and Warin consider the framework introduced previously in \cite{intro5} and present new approximation schemes for the solution of a parabolic nonlinear PDE and its gradient via  Neural Networks. Via an intricate use of intermediate numerical approximations for each term in their scheme, they prove the numerical consistency and high accuracy of the method, at least in the case of low dimensions. \\

The goal of this article is to deal with the \emph{curse of dimensionality} problem in PDEs of \emph{integral, nonlocal type}. We call them PIDE models. In general, standard PDEs model situations where, in order to know the state of a system at a particular point, one needs information of the state in a arbitrarily small neighborhood of the point. On the contrary, PIDEs can model more general phenomena where \emph{long distance} interactions and effects are important and must be considered. An important example of PIDEs are those which involve fractional derivatives, such as the \emph{Fractional Laplacian}. This operator has been extensively studied, from the PDE point of view, during the past ten years, starting from the fundamental work by Caffarelli and Silvestre \cite{CS}. See \cite{DPV,fractional-operator} and references therein for nice introductions to this operator, one of the most relevant examples of integro-differential operators. More generally speaking, nonlocal equations are used in a wide range of scientific areas, see \cite{advection-dispersion} for applications in advection dispersion equations,  \cite{image-processing} for image processing, \cite{perodynamic} for perodyinamic, \cite{hydrodynamics} for hydrodynamics, and see \cite{asset-returns, financial-app} for finances. For more theoretical results on nonlocal equations, see e.g. \cite{erwin1, erwin2, erwin3} and references therein. In \cite{non-local-guide}, the authors give a complete introduction to nonlocal equations and then they develop nonlocal version of three numerical methods: finite difference, finite element and Spectral-Galerkin.\\

We present here an extension and generalization of \cite{DBS} to PIDEs, by adding nonlocal contribution to the PDE. Some important changes are needed in the algorithm, including the use of a third Neural Network to approximate nonlocal parts of the solution. Of particular utility will be the result shown in \cite{bruno} to prove convergence of numerical schemes. As far as we know, this is the first result of neural networks applied to PIDEs, but still incomplete, as we will see below. 
\\

The basic idea of the Euler scheme presented in this article is based on that presented by Zhang in \cite{zha04}. In that paper, the author gives a discrete time approximation of a BSDE (backward SDE) with no jump terms. The scheme involves the computation of conditional expectations and gives important bounds and results that were used in \cite{DBS} to prove the convergence of a DL algorithm to solve a second order fully nonlinear PDE. In our case, nonlocal integral models require additional treatments. The work by Bouchard and Elie \cite{bruno}, very important for the work presented here, generalizes the properties given in \cite{zha04} to the nonlocal setting by considering Lévy process. We will closely follow their approach to construct our numerical scheme. In \cite{torres}, the authors present a discrete-time approximation of a BSDEJ (BSDE with jump terms) such that its solution converges weakly to a solution of the continuous in time equation. They also use this method to approximate the solution to the correspondent PIDE.\\

% As we have seen, it seems almost impossible to escape from Monte Carlo methods. In one hand, the probabilistic approach involve, at some point, the computation of an expected value (a cost function for example) of some complicated, and maybe unknown random variable. On the other, we observe methods with integrals over a subset of $\Rd$ that can be seen as a expected value and, again, we return to the former situation.

\subsection{Setting} Let $d\geq 1$ and $T>0$. Consider the following integro-differential PDE
\begin{equation}%\label{PIDE}
    \left \{
    \begin{aligned}
        \ca{L}u(t,x) + f(t,x,u(t,x),\sigma (x) \nabla u(t,x),\ca{I}[u](t,x))&=0, && (t,x)\in[0,T]\times\Rd ,\\
        u(T,x) &= g(x), && x\in\Rd.
    \end{aligned}
    \right.
    \label{eq: pide}
\end{equation}
Here, $u=u(t,x)$ is the unknown of the problem. For a positive number $t$, let $I_t=[0,t]$. The operator $\ca{L}$ above is of parabolic nonlocal type, and is defined, for $u\in \ca{C}^{1,2}(I_T\times\Rd)$, as follows:
\begin{equation}\label{mathcalL}
\begin{aligned}
    \ca{L}u(t,x) =  \partial_t u(t,x)+ \nabla u(t,x)\cdot b(x) &+ \frac{1}{2}\nabla\cdot(\sigma(x)\sigma (x)^T\nabla u(t,x))\\
    &+ \int_{\Rd} [u(t,x+\beta(x,y))-u(t,x)-\nabla u(t,x)\cdot\beta(x,y)]\lambda (dy),
\end{aligned}
\end{equation}
where $\lambda(dy)$ is a finite measure on $\Rd$, equipped with its Borel $\sigma$-algebra, and a Lévy measure as well which means that
\[
    \lambda(\set{0}) =0\qquad \text{and} \qquad \int_{\Rd} (1\wedge |y|^2)\lambda(dy) < \infty.
\]
Also, $f:I_T\times\Rd\times\R\times\Rd\times \R\to\R$. We also assume the standard Lipschitz conditions on the functions in order to have a unique solution to \eqref{eq: pide} in the class $C^{1,2}$: there exists a universal constant $K>0$ such that
\begin{equation}\label{CC}
\textbf{(C)}\;
\begin{cases}
\hbox{$\bullet$ (Regularity) $g:\Rd\to\R$, $b:\Rd\to\Rd$ and $\sigma:\Rd\to\bb{R}^{d\times d}$ are $K$-Lipschitz real, vector}\\
\hbox{\quad and matrix valued functions, respectively.}\\
\hbox{$\bullet$ (Boundedness)$\beta:\Rd\times\Rd\to\Rd$ and  $\sup_{y\in\Rd} |\beta(0,y)|\le K$.}\\
\hbox{$\bullet$ (Uniformly Lipschitz) $\sup_{y\in\Rd}|\beta(x,y)-\beta(x',y)|\le K|x-x'|,\ \forall\ x,x'\in\Rd$.}\\
\hbox{$\bullet$ (H\"older continuity) For each $t,t',y,y',w,w'\in\mathbb R$ and $x,x',z,z' \in\Rd$, one has}\\
\hbox{\quad  $|f(t,x,y,z,w)-f(t',x',y',z',w')|\le K\big(|t-t'|^{1/2}+|x-x'|+|y-y'|+|z-z'|+|w-w'|\big)$.}\\
\hbox{$\bullet$ (Invertibility) For each $y\in\Rd$, the map $x\to\beta(x,y)$ admits a Jacobian matrix $\nabla\beta(x, y)$}\\
\hbox{\quad such that the function $a(x,\xi; y)=\xi^T (\nabla\beta(x, y) + I)\xi$ satisfies, for all $x,y \in \Rd$, }\\
\hbox{\quad $a(x,\xi;y)\ge |\xi|^2 K^{-1}$ or $a(x,\xi;y)\le -|\xi|^2 K^{-1}$.}
%\hbox{$|f(t,x,y,z,w)-f(t',x',y',z',w')|^2\le K\big(|t-t'|+|x-x'|+|y-y'|+|z-z'|+\norm{w-w'}^2_{L^2 (\lambda)}\big)$.}
\end{cases}
\end{equation}
%{\color{blue} Hay que cambiar la última componente de lo que recibe $f$.}\\
The last condition is of technical type and it is needed to ensure the validity of certain approximation results (see Proposition \ref{chicos}). On the other hand, the nonlocal, integro-differential operator $\ca{I}$ is defined as
\begin{equation}\label{cI}
    \ca{I}[u](t,x) = \int_{\Rd} \big(u(t,x+\beta(x,y))-u(t,x) \big) \lambda(dy).
\end{equation}
The conditions stated in \eqref{CC} are standard in the literature (see \cite{pardoux,bruno,torres}) and are needed to ensure the existence and uniqueness (with satisfactory bounds mentioned below) of solutions to a FBSDE (forward BSDE) related to \eqref{eq: pide}.

\subsection{Forward Backward formulation of \eqref{eq: pide}}
In the previous context, for $t\in [0,T]$, consider the following stochastic setting for \eqref{eq: pide}. Let $(\Omega,\ca{F},\bb{F},\p)$, $\bb{F}=(\ca{F}_t)_{0\le t\le T}$, be a stochastic basis satisfying the usual conditions: $\bb{F}$ is right continuous, and $\mathcal F_0$ is complete (contains all zero measure sets). The filtration $\bb{F}$ is generated by a $d$-dimensional Brownian motion (BM) $W=(W_t)_{0\le t\le T}$ and a Poisson random measure $\mu$ on $\bb{R}_+\times \Rd,$ independent of $W$. Let $L^p:=L^p (\Omega,\ca{F},\p)$ the space of random variables with finite $p$ moment.\\

Recall that $\lambda(dy)$ is a finite Lévy measure on $\Rd$. The \emph{compensated measure} is denoted as 
\begin{equation}\label{compensated}
\overline{\mu}(dt,dy)=\mu(dt,dy) - \lambda(dy)dt, 
\end{equation}
and is such that for every measurable set $A$ satisfying $\lambda(A)<\infty$, $(\overline{\mu}(t,A):=\overline{\mu}([0,t],A))_t$ is a martingale. Given a time $t_i\in [0,T]$, the operator $\bb{E}_i$ will denote the conditional expectation with respect to $\ca{F}_{t_i}$:
\begin{equation}\label{Esperanza_condicional}
    \bb{E}_i\parent{X} := \bb{E}\parent{X\big| \ca{F}_{t_i}}.
\end{equation}
Recall the equation \eqref{eq: pide}-\eqref{mathcalL}-\eqref{cI}. As usual, $X_{r^-}$ denotes the a.e. limit of $X_s$ as $s\uparrow r$. Let us consider the next forward and backward stochastic differential equations with jumps in terms of the unknown variables $(X_t,Y_t,Z_t, U_t)$: 
\begin{align}
    X_t&=x+\int_0^t b(X_s)ds+\int_0^t\sigma(X_{s^-})\cdot dW_s+\int_0^t\int_{\Rd}\beta(X_{s^-},y)\overline{\mu}(ds,dy),
    \label{eq:fbsdej}\\
    Y_t&=g(X_T)+\int_t^T f(\Theta_s)dr-\int_t^T Z_s\cdot dW_s -\int_t^T\int_{\Rd}U_s(y)\overline{\mu}(ds,dy) ,
    \label{eq:bsdej}\\
    \Gamma_t &=\int_{\Rd}U_t (y)\lambda(dy),
    \label{eq:gamma}
\end{align}
where $\Theta_s=(s,X_s,Y_s,Z_s,\Gamma_s)$ for $0\le s\le T$ and $x\in\Rd$. Note that $Z_s$ is vector valued. \\

By applying Itô's lemma (see \cite[Thm 2.3.4]{lukasz}) to the solution $X_t$ in \eqref{eq:fbsdej}  and a $\ca{C}^{1,2}(I_T\times\Rd)$ solution $u$ of PIDE (\ref{eq: pide}) as $Y_t$ in \eqref{eq:bsdej}, we obtain the compact stochastic formulation of \eqref{eq: pide}:
\begin{align}\label{Ito_key}
    u(t,X_t)= & ~{} u(0,X_0)-\int_0^t f(s,X_{s^-},u(s,X_{s^-}),\sigma(X_{s^-})\nabla u(s,X_{s^-}), \ca{I}[u](s,X_{s^-})) ds,\\
&+\int_0^t [\sigma(X_{s^-}) \nabla u(s,X_{s^-})]\cdot dW_s+\int_0^t\int_{\Rd}[u(s,X_{s^-}+\beta(X_{s^-},y))-u(s,X_{s^-})]\overline{\mu}(ds,dy),\nonumber
\end{align}
valid for $t\in [0,T]$. This tells us that whatever we use as approximations of
\begin{align*}
    u(t,X_t),\qquad \sigma(X_t)\nabla u(t,X_t)\qquad \text{and}\qquad u(t,X_{t}+\beta(X_{t},\cdot))-u(t,X_{t}),
\end{align*}
%First, given a $\ca{C}^{1,2}(I_T\times\Rd)$ solution $u$ of PIDE (\ref{eq: pide}), we can obtain another solution to BSDE (\ref{eq:bsdej}), 
%\begin{align*}
    %Y^{'}_t &:= u(t,X_t)\\
    %Z^{'}_t &:= \sigma(X_t)\cdot\nabla u(t,X_t)\\
    %U^{'}_t(\cdot) &:= u(t,X_{t}+\beta(X_{t},\cdot))-u(t,X_{t}).
%\end{align*}
must satisfy (\ref{Ito_key}) in some proper metric. 
An important statement here is that the conditions (\ref{CC}) ensure the existence of a \textit{viscosity solution} $u\in\ca{C}(I_T\times\Rd)$ with at most polynomial growth such that $u(t,X_t) = Y_t$, and this is why our scheme deals with solving the FBSDEJ, see \cite[Thm 3.4]{pardoux}. In order to present the algorithm to approximate this last equation via NNs, we first need to introduce them. The following section may be taken independent of the rest of the paper. The reader familiarized with NNs can advance immediately to Section \ref{Discretization}. 

\subsection*{Organization of this paper} The rest of this work is organized as follows. Section \ref{NNNs} recalls the main results on Neural Networks needed in this paper. In Section \ref{Discretization} we introduce the discretization scheme for PIDEs. In Section \ref{prelim} we state all the preliminary results needed in this paper for the proof of Theorem \ref{MT1}. Section \ref{sect:Proof} contains the proof of Theorem \ref{MT1} and Subection \ref{sect:opti} studies the optimization of our algorithm.

\section{Neural Networks and Approximation Theorems}\label{NNNs} 

Neural Networks (NN) are not recent. In \cite{pitts} and \cite{perceptron}, published in $1943$ and $1958$ respectively, the authors introduce the concept of a NN but far from the actual definition. Through the years, the use of a NN as function approximates started to gain importance for its well performance in applications. A rigorous justification of this property was proven in \cite{Hornik, Leshno}, using the Stone-Weierstrass theorem. These papers state that the good performance of neural networks is not a fortuitous result, but a well established and justified property. See \cite{origin-dl, state-art-dl} for a review on the origin and state of the art survey of DL, respectively.\\

The huge amount of available data, due to social media, astronomical observatories and even Wikipedia, together with the progress of computational power, have allowed us to train more and more efficient Machine Learning (ML) algorithms, considering data that years ago were not possible to analyze. \emph{Deep Learning} is a part of supervised ML algorithms and it concerns with the problem of approximating an unknown nonlinear function $f:X\to Y$, where $X$ represents the set of possibles inputs and $Y$ the outputs, for example $Y$ could be a finite set of classes and therefore $f$ has a classification task. In order to perform a DL algorithm, we need a data set of the form $D = \set{(x,f(x)): x\in A}$ with $A\subset X$, which in the literature is also known as the \emph{training set}. The next step is to define a family of candidates $\set{f_\theta: \theta\in\Theta}$ of functions parametrized by $\theta\in\Theta$. Now, with this set up, the final step is to find an optimal $\theta^*\in\Theta$ minimizing some proper cost function $L(\theta;D)$ over $\Theta$. The definition given is too general and leaves a lot of questions on how to implement a DL algorithm; some of these questions will be answered whenever we give a formal definition of a NN.\\

The complexity and generality of the main problem that DL is trying to solve, makes it useful to a large variety of disciplines in science. In astronomy, the large amount of data recollected by observatories makes it a perfect place to implement ML, see \cite{astronomy-app1} for a review of ML in astronomy and \cite{light-curves} for a concrete use of Convolutional Neural Networks (CNN) to classify light curves. See \cite{physics-dl1} for a review of ML on experimental high energy physics and \cite{qst} for an application of NN on quantum state tomography. In \cite{street-style}, the authors use DL to find patterns in fashion and style trends by space and time, using data from Instagram. In \cite{brain-tumor} the authors train a CNN to classify brain tumors into Glioma, Meningioma, and Pituitary Tumor reaching a high accuracy. See \cite{medical-survey} for a survey on the use of DL in medical science where CNN are the most common type of DL structure.\\

To fix ideas, in this paper we focus on a simpler setting, where the inputs and outputs spaces are multidimensional real spaces. In order to define the candidate functions we need an input dimension $d$, a number of layers $L$ with $l_i$ neurons, each for $i\in\set{1,...,L}$, an output dimension $k = l_L$, weight matrix $(W_i)_{i=1}^L$, a bias vectors $(b_i)_{i=1}^L$, and an activation function $\sigma:\R\to\R$. The activation function is a way to break the linearity.

\begin{definition}
    Given $(d,L,l_i,W_i,b_i)$ as above, $\theta=(W_i,b_i)$, we define the neural network $\ca{U}:\Rd\to\R^{l_L}$ as the following composition
    \begin{align*}
        \ca{U}(x;\theta)=\parent{ A_L\circ\sigma\circ A_{L-1}\circ\cdots \circ A_2 \circ \sigma\circ A_1}(x),
    \end{align*}
    where $A_i:\R^{l_{i-1}}\to\R^{l_i}$ is the affine linear function such that $A_i(x)=W_ix+b_i$ and $\sigma$ is applied component-wise.
\end{definition}

In the following, the input and output dimensions will be fixed parameters. The range of functions that we can compute varying the dimensions of the parameters $\theta=(W_i,b_i)$ will be called the space of neural networks and will be denoted by $\ca{N}$.
The next theorem can be found on \cite{Leshno} as \comillas{Hornik Theorem 1}.

\begin{theorem}[\cite{Hornik, Leshno}]\label{uat}
If the activation function is bounded and nonconstant, then the neural network space $\ca{N}$ is dense in $L^p (\mu)$ for every finite measure $\mu$ in $\Rd$.
\end{theorem}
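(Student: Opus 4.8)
The plan is to argue by duality (Hahn--Banach), reduce to one space dimension, and then run a Fourier--analytic argument in the spirit of Hornik and Leshno; boundedness of $\sigma$ enters only through the elementary fact that a bounded polynomial is constant. First, some harmless reductions: a network with output in $\mathbb{R}^{l_L}$ is a tuple of scalar networks (stack the weight matrices block-diagonally), so it suffices to take $l_L=1$; a two-layer network with identity second weight realises exactly $x\mapsto\sigma(\langle w,x\rangle+b)$, and summing $l_1$ such neurons realises any finite linear combination, so $\mathcal{N}$ contains the span $\Sigma:=\mathrm{span}\{x\mapsto\sigma(\langle w,x\rangle+b):w\in\mathbb{R}^d,\ b\in\mathbb{R}\}$; and every element of $\mathcal{N}$ is bounded and Borel measurable, hence lies in $L^p(\mu)$ since $\mu$ is finite (we take $1\le p<\infty$). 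So it is enough to prove $\overline{\Sigma}^{\,L^p(\mu)}=L^p(\mu)$. Assume not: by Hahn--Banach and $L^p(\mu)^\ast=L^q(\mu)$ ($\mu$ is $\sigma$-finite), there is $g\in L^q(\mu)$ with $\mu(\{g\neq 0\})>0$ annihilating $\Sigma$; writing $\nu:=g\,\mu$, a nonzero finite signed Borel measure (finite because $g\in L^q(\mu)\subseteq L^1(\mu)$, $\mu$ being finite), we get $\int_{\mathbb{R}^d}\sigma(\langle w,x\rangle+b)\,d\nu(x)=0$ for all $w\in\mathbb{R}^d$ and $b\in\mathbb{R}$. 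The whole point is then to deduce $\nu=0$.

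Next I would mollify and cut the dimension. Convolving $\sigma$ with a mollifier $\phi_\varepsilon\in C_c^\infty(\mathbb{R})$ and using Fubini (legitimate since $\sigma$ is bounded and $\nu,\phi_\varepsilon$ are finite resp.\ compactly supported), the identity above persists with $\sigma$ replaced by $\sigma\ast\phi_\varepsilon\in C^\infty(\mathbb{R})$, which is bounded with all derivatives bounded; for $\varepsilon$ small it is still nonconstant, hence (a bounded polynomial being constant) not a polynomial, so none of its derivatives vanishes identically. Thus we may assume $\sigma\in C^\infty(\mathbb{R})$ is bounded, has bounded derivatives, and is non-polynomial. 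Now for a unit vector $\omega$ let $\nu_\omega$ be the push-forward of $\nu$ under $x\mapsto\langle\omega,x\rangle$, a finite signed measure on $\mathbb{R}$; taking $w=t\omega$ gives $\int_{\mathbb{R}}\sigma(ts+b)\,d\nu_\omega(s)=0$ for all $t,b\in\mathbb{R}$. If every $\nu_\omega$ were zero, then $\widehat\nu(r\omega)=\widehat{\nu_\omega}(r)=0$ for all $r,\omega$, so $\widehat\nu\equiv0$ and $\nu=0$ by injectivity of the Fourier transform on finite signed measures --- a contradiction. So everything reduces to the one-dimensional statement: if $\rho$ is a finite signed measure on $\mathbb{R}$ and $\int\sigma(ts+b)\,d\rho(s)=0$ for all $t,b$, with $\sigma$ as above, then $\rho=0$.

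For this one-dimensional core I would differentiate the identity repeatedly in $b$ --- legitimate since every $\sigma^{(k)}$ is bounded and $\rho$ finite --- to obtain $\int\sigma^{(k)}(ts+b)\,d\rho(s)=0$ for all $k\ge0$ and all $t,b$, and then exploit that no $\sigma^{(k)}$ vanishes identically. Concretely, via the difference-quotient/Fourier argument of Hornik and Leshno one shows that $s\mapsto\cos(\omega s)$ and $s\mapsto\sin(\omega s)$ lie in the appropriate closure of $\{s\mapsto\sigma(ts+b):t,b\in\mathbb{R}\}$ --- equivalently, $\int\cos(\omega s)\,d\rho(s)=\int\sin(\omega s)\,d\rho(s)=0$ for every $\omega$ --- whence $\widehat\rho\equiv0$ and $\rho=0$. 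Alternatively one may take the classical route: reduce to $d=1$ by ridge-function approximation, synthesise every monomial $s\mapsto s^m$ from iterated difference quotients in $t$ of the (mollified, hence smooth and non-polynomial) $\sigma$, and conclude by Weierstrass that $\Sigma$ is dense in $C(K)$ for every compact $K$.

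The step I expect to be the main obstacle is precisely this one-dimensional passage from ``$\sigma$ bounded and non-polynomial'' to ``the ridge functions of $\sigma$ resolve every frequency''. This is where the hypotheses are genuinely used, and it is delicate because $\sigma$ may be discontinuous, so $\widehat\sigma$ can be a truly singular tempered distribution; the argument rests on locating a nonzero point of $\mathrm{supp}(\widehat\sigma)$ --- which exists exactly because $\sigma$ is not a polynomial --- and controlling $\widehat\sigma$ near it, and this is the technical heart carried out in \cite{Hornik,Leshno}. A second, lesser point: along the classical route via density in $C(K)$, the passage from uniform approximation on compacta to approximation in $L^p(\mu)$ still requires a truncation argument using the finiteness of $\mu$ together with the bound $\big\|\sum_j c_j\,\sigma(\langle w_j,\cdot\rangle+b_j)\big\|_\infty\le\big(\sum_j|c_j|\big)\|\sigma\|_\infty$ to prevent the approximants from blowing up on the tails of $\mu$; the duality argument outlined here is cleaner in that respect, since the annihilating measure $\nu$ is automatically finite.
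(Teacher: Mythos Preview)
The paper does not supply a proof of this theorem at all: it is quoted verbatim as ``Hornik Theorem~1'' from \cite{Leshno} (with the companion reference \cite{Hornik}) and used as a black box. So there is no proof in the paper to compare your proposal against.

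That said, your sketch is a faithful outline of the classical argument. The Hahn--Banach reduction to an annihilating finite signed measure, the projection to one dimension via push-forwards and injectivity of the Fourier transform, and the mollification step turning ``bounded nonconstant'' into ``smooth, bounded, non-polynomial with bounded derivatives'' are all standard and correct. Your honest identification of the one-dimensional core --- showing that a smooth bounded non-polynomial $\sigma$ has enough dilates and shifts to separate every frequency --- as the genuine technical crux is accurate; this is precisely what \cite{Hornik} and \cite{Leshno} each spend their effort on (Hornik via Wiener's Tauberian theorem and related Fourier machinery, Leshno via the differentiation-in-parameter argument you describe). One small point worth tightening if you pursue this route: after mollifying, you assert that for $\varepsilon$ small the convolution $\sigma\ast\phi_\varepsilon$ is still nonconstant. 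This is true but deserves a line --- e.g.\ if every $\sigma\ast\phi_\varepsilon$ were constant then $\sigma$ would be a.e.\ constant, contradicting the hypothesis (one uses that $\phi_\varepsilon\to\delta_0$ weakly). Otherwise the proposal is sound as a roadmap, and indeed more than the paper itself provides.
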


This theorem tells us that if we want to approximate, for example, some function $f:\Rd\to \R$ in $L^2$, the quantity 
\begin{equation}\label{NN_L2norm}
    \underset{\xi}{\text{inf}} \int_{\Rd} (\ca{U}(x;\xi) - f(x))^2 \mu(dx)
\end{equation}
can be made arbitrarily small by possible making the dimension of the parameters growing sufficiently large, whenever $\mu$ is a finite measure on $\Rd$ and the activation function that defines the NN is bounded and non-constant.\\

\section{Discretization of the dynamics}\label{Discretization}

Fix a constant step partition of the interval $I_T$, defined as $\pi=\set{\frac{iT}{N}}_{i\in\set{0,...,N}}$, $t_i= \frac{iT}{N}$, and set $\Delta W_i=W_{t_{i+1}}-W_{t_i}$. Also, define $h:=\frac{T}N$ and (with a slight abuse of notation),  $\Delta t_i=(t_i,t_{i+1}]$. Recall the compensated measure $\overline\mu$ from \eqref{compensated}. Let 
\begin{equation}
M_t=\overline{\mu}((0,t],\Rd) \quad \hbox{and} \quad \Delta M_i=\overline{\mu}((t_i,t_{i+1}],\Rd):=\int_{t_i}^{t_{i+1}}\!\!\int_{\Rd}\overline{\mu}(ds,dy).
\end{equation}
It is well-known that an Euler scheme for the first equation in (\ref{eq:bsdej}) obeys the form 
\begin{align}
    X^{\pi}_0 &= x,
    \label{eq:euler-x1}\\
    X^{\pi}_{t_{i+1}}&=X^{\pi}_{t_i}+h \, b(X^{\pi}_{t_i})+\Delta W_i\sigma(X^{\pi}_{t_i})+\int_{\Rd}\beta(X_{t_i},y)\overline{\mu}((t_i,t_{i+1}],dy).
    \label{eq:euler-x2}
\end{align}
Note that due to the finiteness of the $\lambda$ we don't care much for the discontinuities as there are a finite number of those. This scheme satisfies the next error bound (\cite{bruno})
\begin{align}\label{error_X}
    \max_{i=1,...,N}\bb{E}\parent{\sup_{t\in [t_i,t_{i+1}]} |X_t-X^{\pi}_{t_i}|^2} = O(h).
\end{align}
Adapting the argument of \cite{DBS} to the nonlocal case, and in view of \eqref{Ito_key}, we propose the following modified Euler scheme: for $i=0,1,\ldots,N$,
\begin{align*}
    u(t_{i+1},X^\pi_{t_{i+1}})\approx &~{} F_i\Big( t_i,X^\pi_{t_i},u(t_i,X^\pi_{t_i}),\sigma(X^\pi_{t_i}) \nabla u(t_i,X^\pi_{t_i}) ,u(t_i,X^\pi_{t_i}+\beta(X^\pi_{t_i},\cdot))-u(t_i,X^\pi_{t_i}),h,\Delta W_i \Big),
\end{align*}
%u(t_i,X^\pi_{t_i})-hf(X^\pi_{t_i},u(t_i,X^\pi_{t_i}),\sigma(X^\%pi_{t_i})\nabla u(t_i,X^\pi_{t_i}), \ca{I}[u](t_i,X^\pi_{t_i})) %\\
%&+\Delta W_i \underbrace{\nabla %u(t_i,X^\pi_{t_i})\sigma(X^\pi_{t_i})}_{\textcolor{blue}{NN_1}}%+\int_{\Rd}\underbrace{[u(t_i,X^\pi_{t_i}+\beta(X^\pi_{t_i},y))%-u(t_i,X^\pi_{t_i})]}_{\textcolor{blue}{NN_2}}\overline{\mu}\bi%g((t_i,t_{i+1}],dy\big) \\
%=: &~{} F\left( t_i,X^\pi_{t_i},u(t_i,X^\pi_{t_i}),\sigma(X^\pi%_{t_i})\nabla u(t_i,X^\pi_{t_i}),u(t_i,X^\pi_{t_i}+\beta(X^\pi_%{t_i},\cdot))-u(t_i,X^\pi_{t_i}),h,\Delta W_i \right).
%
where $F_i:I_T\times\Rd\times\R\times\Rd\times L^1(\lambda)\times\bb{R}_+\times\Rd\to\R$ is defined as
\begin{align*}
    F_i(t,x,y,z,g,h,w):= y - hf \left(t,x,y,z,\int_{\Rd}g(y)\lambda(dy) \right) + w\cdot z + \int_{\Rd} g(y)\bar{\mu}\parent{(t_i,t_{i+1}],dy}.
\end{align*}

\begin{remark}\label{remark-mc}
Note that the non local term in (\ref{eq: pide}) forces us to define $F_i$ in such a way that its fifth argument must be a function $g$ in $L^1 (\lambda)$. In view if the integrals involved in $F_i$, it appears that we are again facing the same high dimensional problem; however this problem may be instead treated with Monte Carlo approximations, see below. 
\end{remark}

\begin{remark}
In the nonlocal setting, the function $F_i$ also depends on the step in terms of the integrated measure $\bar{\mu}\parent{(t_i,t_{i+1}],dy}$. This is an important change in the Euler scheme, since we do not approximate the nonlocal term at time $t_i$ in this case, but instead take into account the whole measure $\bar \mu$ of the time interval $(t_i,t_{i+1}]$. 
\end{remark}

Recall Theorem \ref{uat}. For every time $t_i$ on the grid, along the proof we will choose NNs 
\begin{equation}\label{NNs}
\big(\ca{U}_i(\cdot;\theta),\ca{Z}_i(\cdot;\theta),\ca{G}_i(\cdot,\circ;\theta)\big)
\end{equation}
approximating in some sense to be specified 
 \[
 (u(t_i,\cdot),~\sigma(\cdot) \nabla u(t_i,\cdot), ~u(t_i,\cdot + \beta(\cdot,\circ))-u(t_i,\cdot)),
 \]
respectively. Let also
\begin{align}\label{calG}
\prom{\ca{G}}_i(x;\theta) = \int_{\mathbb R^d}\ca{G}_i(x,y;\theta)\lambda(dy).
\end{align}
We propose an extension of the DBDP1 algorithm presented on \cite{DBS}. The idea of the algorithm is that the NN, evaluated on $\xscheme{i}$, are good approximations of the processes solving the FBSDEJ.
Let $L_i$ be a cost function defined on a parameters space as
\begin{align}\label{eq:loss}
    L_i(\theta)=\bb{E}\left|\ug{i+1}(X^\pi_{t_{i+1}})-F(t_i,X^\pi_{t_i},\ca{U}_i(X^\pi_{t_i};\theta),\ca{Z}_i(X^\pi_{t_i};\theta),\ca{G}_i(X^\pi_{t_i},\cdot;\theta),h,\Delta W_i)\right|^2.
\end{align} 
\begin{algorithm}[H]
\label{algorithm1}
\SetAlgoLined
Start with $\widehat{\ca{U}}_N(\cdot)=g(\cdot)$\;
\For{$i\in\set{N-1,...,1}$}{
  Given $\ug{i+1}$\;
  Minimize $\theta\to L_i(\theta)$\;
  Update $(\ug{i},\widehat{\ca{Z}}_{i},\widehat{\ca{G}}_{i})=(\ca{U}_i(\cdot;\theta^*),\ca{Z}_i(\cdot;\theta^*),\ca{G}_i(\cdot,\circ;\theta^*))$\;
 }
\caption{DBDP1 PIDE extension}
\end{algorithm}
\vspace{.5cm}

For the minimization step we need to calculate an expected value, but this is a complicated task due to the non linearity and the fact that the distribution of the random variables involved are not always known. To overcome this situation, as well as in \cite{DBS}, one has to use a Monte Carlo approximation. See also Remark \ref{remark-mc}.\\

$L^p(\lambda)$ represents the standard Lebesgue space for the measure $\lambda$. For $p\ge 1$ consider the next processes spaces
\begin{align*}
    \ca{S}^p &= \set{Y: \Omega\times [0,T]\to \R : \norm{Y}_{\ca{S}^p}:=\bb{E}\parent{\underset{t\in [0,T]}\sup |Y_t|^p}^{\frac{1}{p}}<\infty }, \\
    L_{W}^p(\Rd) &= \set{Z:\Omega\times [0,T]\to \Rd :\norm{Z}_{W}^p=\bb{E}\parent{\int_0^T|Z_t|^pdt}<\infty}, \\
    L_{\mu}^p(\R) &= \set{U:\Omega\times [0,T]\times\Rd\to \bb{R} :\norm{U}_{\mu}^p=\bb{E}\parent{\int_0^T\int_{\Rd}|U_t(y)|^p\lambda(dy)dt}<\infty}.
\end{align*}
We will only work with $p=2$ or $p=1$ and denote $\ca{B}^2 = \ca{S}^2\times L_{W}^2(\Rd)\times L_{\mu}^2(\R)$. \\

In order to estimate errors we need a solution to compare, the conditions \textbf{(C)} guarantee the existence and uniqueness of a solution $(X,Y,Z,U)\in\ca{S}^2\times\ca{B}^2$ to the FBSDEJ (\ref{eq:fbsdej}) with starting point $x$, and such that (see \cite[Thm 2.1]{pardoux})
\begin{align}\label{eq: sol-bound}
    ||(X,Y,Z,U)||_{\ca{S}^2\times\ca{B}^2}^2 &\le C_2(1+|x|^2), \\
    \bb{E}\parent{\sup_{s\le u\le t}|X_u-X_s|^2} &\le C_2(1+|x|^2)|t-s|, \\
     \bb{E}\parent{\sup_{s\le u\le t}|Y_u-Y_s|^2} &\le C_2\left[ (1+|x|^2)|t-s|^2 + ||Z||^2_{L^2(W;[s,t])} + ||U||^2_{L^2(\mu;[s,t])} \right].
\end{align}
We also introduce the averaged conditional expectations
\begin{equation}\label{barras}
\begin{aligned}
 %   \overline{d}_{t_i}=&~{} \dfrac{1}{\Delta t_i}\bb{E}_i\parent{\integral\Gamma_td t}\\
    \overline{Z}_{t_i}=\dfrac{1}{h}\bb{E}_i\parent{\integral Z_t dt},\quad \overline{\Gamma}_{t_i}=&~{} \dfrac{1}{h}\bb{E}_i\parent{\integral\Gamma_td t}.
    \end{aligned}
\end{equation}
An important quantity to define is the $L^2$-regularity of the solutions $(Z,\Gamma)$ (see \cite{bruno} and \cite{DBS}):
\begin{equation}\label{errores0}
\begin{aligned}
    \varepsilon^Z(h) &:= \bb{E}\parent{\sum_{i=0}^{N-1}\integral |Z_t-\overline{Z}_{t_i}|^2 dt},\\    
    \varepsilon^\Gamma(h) &:= \bb{E}\parent{\sum_{i=0}^{N-1}\integral |\Gamma_t-\overline{\Gamma}_{t_i}|^2 dt}.
\end{aligned}
\end{equation}
Both quantities can be made arbitrarily small, see Proposition \ref{chicos} below.

\subsection{Notation}
Along this paper, $C>0$ will denote a fixed constant, only depending on the dimension, but not on a partition. It may change from one line to another. Also, the notation $a\lesssim b$ means that there exists $C>0$ such that $a\leq Cb$, with $C$ independent of the partition.\\

Only in stochastic integral context, for a cadlag process $(P_s)_s$, $\Delta P_s:=P_s - P_{s^-}$ stands for the jump of $P$ at time $s$. From \cite[Sections 2 and 4]{applebaum}, we recall the definition of the stochastic integral with respect to $\mu$. For a process $U\in L_{\mu}^1(\Rd)$,
\begin{align*}
    \int_{s}^t \int_{\Rd} U(r,y)\mu(ds,dy) := \sum_{r\in (s,t]} U(r,\Delta P_r)\ind{\Rd} (\Delta P_r),
\end{align*}
where 
\begin{align*}
    \parent{P_s = \int_{\Rd} x\mu(s,dx)}_s,
\end{align*}
is a compound Poisson process (see \cite[Thm 2.3.10]{applebaum}). And therefore,  
\begin{align*}
    \int_{s}^t \int_{\Rd} U(r,y)\bar{\mu}(ds,dy) = \sum_{r\in (s,t]} U(r,\Delta P_r)\ind{\Rd} (\Delta P_r) - \int_{s}^t \int_{\Rd} U(r,y)\lambda(dy) dr.
\end{align*}
For sake of simplicity and to avoid an overload of parenthesis, for $Y_1,Y_2$ random variables and $Z_1,Z_2\in\Rd$ random vectors, we follow the next convention
\begin{align*}
    \bb{E}|Z_1-Z_2|^2 = \bb{E} \parent{|Z_1-Z_2|^2},\quad \bb{E}(Y_1-Y_2)^2 = \bb{E}\parent{(Y_1-Y_2)^2}.
\end{align*}
For $a,b\in\Rd$, we denote $a\cdot b$ the dot product in $\Rd$. We write $|a|=a\cdot a$ and $|x|=\sqrt{x^2}$ whenever $x\in \R$. We also use the convention
\[
\int f(s)ds = \begin{pmatrix}\vdots\\ \int f_i(s) ds\\ \vdots\end{pmatrix},
\]
whenever $f:\R\to\Rd$.

\section{Preliminaries}\label{prelim}

We are going to define random variables by composing neural networks with random variables. For technical reasons, these new variables are needed in some $L^p(\Omega,\ca{F},\p)$ space, for $p\in[1,+\infty)$.

\begin{lemma}\label{lemma:nnBound}
Let $X\in L^2 (\Omega,\ca{F},\p;\Rd)$, $W\in\bb{R}^{d}$, $b\in\bb{R}$. Define $\theta=(W,b)\in\R^{d+1}$ and $\ca{U}(\cdot;\theta)$ the associated single layer neural network with input dimension $d$ and output dimension one. If the activation function $\sigma:\R\to \R$ is such that $|\sigma(x)|\le (1+|x|)$ for every $x\in\R$, then  $\ca{U}(X;\theta)\in L^2 (\Omega,\ca{F},\p;\R)$.

\begin{proof}
    Without loss of generality we assume a simple NN. Recall the definition of neural networks, let $\theta = (W,\overline{W}, b,\overline{b})\in\Rd\times\R\times\Rd\times\R$ and let $ \ca{U}(X;\theta) := \overline{W}\sigma(W\cdot X + b) + \overline{b}$,
    \begin{align*}
       \bb{E}(\ca{U}(X;\theta)^2) = \bb{E}(|\overline{W}\sigma(W\cdot X + b) + \overline{b}|^2)\lesssim 
       \overline{W}^2\bb{E}(1+|W\cdot X+b|^2) + \overline{b}^2 < +\infty.
    \end{align*}
 This ends the proof.   
\end{proof}
\end{lemma}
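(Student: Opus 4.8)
The plan is to establish a stability property: composing a neural network with a square-integrable input yields a square-integrable output, as long as the activation has at most linear growth. The strategy is to unwind the definition of $\ca{U}(\cdot;\theta)$ and push the $L^2$ control through each affine map and each componentwise application of $\sigma$.

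First I would handle the single hidden layer case directly, which is the case in the statement; it is harmless to also allow an output affine layer and write $\ca{U}(X;\theta) = \overline{W}\,\sigma(W\cdot X+b)+\overline{b}$, with $W\in\Rd$ and $\overline{W},b,\overline{b}\in\R$. The hypothesis $|\sigma(x)|\le 1+|x|$ together with Cauchy--Schwarz gives the pointwise bound
\[
|\ca{U}(X;\theta)| \;\le\; |\overline{W}|\bigl(1 + |W|\,|X| + |b|\bigr) + |\overline{b}|.
\]
Squaring, using $(a_1+a_2+a_3+a_4)^2\le 4\sum_j a_j^2$, and taking expectations, the right-hand side is controlled by $\bb{E}|X|^2<\infty$ and the deterministic parameter norms, so $\bb{E}(\ca{U}(X;\theta)^2)<\infty$. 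This is exactly the displayed estimate, with the dimensional constant absorbed into $\lesssim$.

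For a general network with $L$ layers I would argue by induction on $L$. The two elementary ingredients are: (i) an affine map sends $L^2(\Omega;\R^k)$ into $L^2(\Omega;\R^m)$, since $|WV+b|\le\norm{W}\,|V|+|b|$; and (ii) applying $\sigma$ componentwise preserves $L^2$, because $|\sigma(t)|^2\le 2+2t^2$ in each coordinate. Alternating (i) and (ii) along the composition $A_L\circ\sigma\circ\cdots\circ\sigma\circ A_1$ then gives $\ca{U}(X;\theta)\in L^2(\Omega,\ca{F},\p;\R)$. I do not expect a genuine obstacle here; the only point requiring a little care is bookkeeping the finitely many architecture-dependent constants that appear when $\sigma$ acts componentwise and when the layer widths change, but these are harmless since $L$ and the widths are fixed and finite. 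If the sharpest form were wanted, I would additionally record the quantitative estimate $\norm{\ca{U}(X;\theta)}_{L^2}\le C_\theta\bigl(1+\norm{X}_{L^2}\bigr)$ with $C_\theta$ depending only on $\theta$ and the architecture, since this linear-growth form in $\norm{X}_{L^2}$ is what the later error analysis in the paper actually uses.
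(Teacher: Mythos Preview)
Your proposal is correct and follows essentially the same approach as the paper: write $\ca{U}(X;\theta)=\overline{W}\sigma(W\cdot X+b)+\overline{b}$, use $|\sigma(x)|\le 1+|x|$ to get a pointwise bound linear in $|X|$, square and take expectations. The paper simply records the one-line estimate and says ``without loss of generality we assume a simple NN''; your added induction over layers and the quantitative bound $\norm{\ca{U}(X;\theta)}_{L^2}\le C_\theta(1+\norm{X}_{L^2})$ are reasonable elaborations but not needed for what the paper actually uses.
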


\subsection{Useful results from Stochastic Calculus}

\begin{lemma}[Martingale Representation Theorem, \cite{lukasz}]\label{MRT}
For any martingale $M$ there exists $(Z,U)\in L^2_{W}(\Rd)\times L^2_{\mu}(R)$ such that for $t\in[0,T]$
\[
    M_t = M_0 + \int_0^t Z_s dW_s + \int_0^t \int_{\Rd} U(s,y)\overline\mu (ds,dy).
\]
\end{lemma}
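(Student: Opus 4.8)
This is the classical martingale representation (predictable representation) property for a filtration generated by a Brownian motion $W$ and an independent Poisson random measure $\mu$, so the plan is to run the Kunita--Watanabe/Dellacherie scheme, specialised to the present setting in which $\lambda$ is \emph{finite}. First I would reduce the statement to a representation of a single $L^2$ random variable: since the conclusion requires square-integrable integrands, the relevant case is $M\in L^2$, and up to taking a cadlag modification one may assume $M$ is $L^2$-bounded on $[0,T]$, so that $M_t=\bb{E}(M_T\mid\ca{F}_t)$ with $M_T\in L^2(\Omega,\ca{F}_T,\p)$; as $\ca{F}_0$ is trivial modulo null sets, $M_0=\bb{E}(M_T)$. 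Set
\[
\ca{H}:=\set{\xi\in L^2(\Omega,\ca{F}_T,\p):\ \xi=\bb{E}(\xi)+\int_0^T Z_s\,dW_s+\int_0^T\!\!\int_{\Rd}U(s,y)\,\overline{\mu}(ds,dy),\ (Z,U)\in L^2_W(\Rd)\times L^2_\mu(\R)}.
\]
By the It\^o isometries for $W$ and for $\overline{\mu}$, together with the orthogonality of the two integrators coming from $W\indep\mu$, the map $(Z,U)\mapsto\int_0^T Z\,dW+\int_0^T\!\!\int U\,d\overline{\mu}$ is a linear isometry from $L^2_W(\Rd)\times L^2_\mu(\R)$ into $L^2$; hence its range is closed and $\ca{H}$ is a closed linear subspace of $L^2(\Omega,\ca{F}_T,\p)$. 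If $\ca{H}=L^2(\Omega,\ca{F}_T,\p)$, then applying this to $\xi=M_T$ and conditioning on $\ca{F}_t$ (the two stochastic integrals being martingales) yields $M_t=M_0+\int_0^t Z\,dW+\int_0^t\!\!\int U\,d\overline{\mu}$, which is the claim. So it suffices to prove $\ca{H}^{\perp}=\set{0}$.

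To this end I would exhibit a total family inside $\ca{H}$. For bounded deterministic step functions $f:[0,T]\to\Rd$ and $g:[0,T]\times\Rd\to\R$, let $\ca{E}^{f,g}$ denote the Dol\'eans--Dade exponential of $\int_0^\cdot f_s\,dW_s+\int_0^\cdot\!\!\int g(s,y)\,\overline{\mu}(ds,dy)$. Since $\lambda$ is finite and $f,g$ are bounded, $\ca{E}^{f,g}$ is an $L^2$-martingale on $[0,T]$ with locally bounded moments, and the Dol\'eans--Dade equation reads
\[
\ca{E}^{f,g}_t=1+\int_0^t\ca{E}^{f,g}_{s^-}f_s\,dW_s+\int_0^t\!\!\int_{\Rd}\ca{E}^{f,g}_{s^-}g(s,y)\,\overline{\mu}(ds,dy),
\]
whose integrands lie in $L^2_W(\Rd)\times L^2_\mu(\R)$ by boundedness of $f,g$, finiteness of $\lambda$ and the $L^2$-bound on $\ca{E}^{f,g}$. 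Hence $\ca{E}^{f,g}_T\in\ca{H}$. Now let $\eta\in L^2(\Omega,\ca{F}_T,\p)$ be orthogonal to every such $\ca{E}^{f,g}_T$. Choosing $f,g$ adapted to a fixed partition $0=s_0<\cdots<s_n=T$ and to finitely many disjoint Borel sets $A_1,\dots,A_m\subset\Rd$, the variable $\ca{E}^{f,g}_T$ is a fixed deterministic constant times an explicit function of the finite family $\parent{W_{s_{k+1}}-W_{s_k},\ \mu((s_k,s_{k+1}]\times A_j)}_{k,j}$; using that complex linear combinations of such exponentials are dense in $L^2(\ca{G})$ for the $\sigma$-algebra $\ca{G}$ generated by that finite family, one obtains $\bb{E}\parent{\eta\mid\ca{G}}=0$. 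Such $\ca{G}$ generate $\ca{F}_T$ up to null sets, so a monotone class argument gives $\eta=0$ a.s.; thus $\ca{H}^{\perp}=\set{0}$ and $\ca{H}=L^2(\Omega,\ca{F}_T,\p)$, completing the proof.

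The main obstacle is this totality step, i.e.\ the density in $L^2(\Omega,\ca{F}_T,\p)$ of the stochastic exponentials $\ca{E}^{f,g}_T$. It splits into (i) checking that $\ca{E}^{f,g}$ is a genuine $L^2$-martingale satisfying the displayed linear SDE --- delicate in the jump component, but considerably simplified here since finiteness of $\lambda$ forces only finitely many jumps on $[0,T]$, so $\mu$ is of compound-Poisson type --- and (ii) the uniqueness-of-measures argument identifying $\ca{F}_T$ with the $\sigma$-algebra generated by $\set{W_s,\ \mu([0,s]\times A):\ s\le T,\ A\in\borel}$ and propagating the orthogonality relation through a functional monotone class. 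Once density is available the rest is soft, and uniqueness of $(Z,U)$ --- not part of the statement but immediate --- follows from the isometry. A complete treatment in exactly this Brownian-plus-finite-Poisson setting is given in \cite{lukasz}; see also \cite{applebaum}.
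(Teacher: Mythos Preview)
The paper does not prove this lemma at all: it is stated with a citation to \cite{lukasz} and used as a black box. Your plan is the standard Kunita--Watanabe/Dellacherie argument (closedness of the representable space via the It\^o isometry, then totality of Dol\'eans--Dade exponentials, then a monotone class step), which is correct and is essentially the proof given in the cited reference; the simplification you note from finiteness of $\lambda$ is legitimate. So there is nothing to compare --- you have supplied a valid proof where the paper simply defers to the literature. One small remark: the lemma as stated in the paper says ``for any martingale $M$'', but your reduction implicitly (and correctly) restricts to square-integrable $M$, since otherwise $(Z,U)\in L^2_W(\Rd)\times L^2_\mu(\R)$ cannot be expected.
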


We will need the next property involving conditional expectation, Itô isommetry and that $W$ is independent of $\overline{\mu}$. 

\begin{lemma}(Conditional Ito isommetry)\label{lem:Ito}
For $V^1,V^2\in L^2(\mu)$ and $H,K\in L^2_W(\Rd)$,
\begin{align}
    \bb{E}_i\parent{\integral H_rdW_r\integral K_rdW_r} &= \bb{E}_i\parent{\integral H_rK_r dr},\label{eq:property1}\\
    \bb{E}_i\parent{\integral\int_{\Rd}V^1(s,z)\overline{\mu}(ds,dz)\integral\int_{\Rd}V^2(s,z)\overline{\mu}(ds,dz)}&=\bb{E}_i\parent{\integral\int_{\Rd}V^1(s,z)V^2(s,z)\lambda(dz)ds},
    \nonumber\\
    \bb{E}_i\parent{\integral\int_{\Rd}V^1(r,y)\overline{\mu}(dy,dr)\integral H_r dW_r} &= 0. \nonumber
\end{align}
\end{lemma}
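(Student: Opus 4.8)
The plan is to prove the three identities in Lemma \ref{lem:Ito} by reducing each to the corresponding (unconditional) isometry together with two structural facts: the stochastic integrals over $\integral$ are $\ca{F}_{t_i}$-conditionally centered martingale increments, and $W$ is independent of $\overline{\mu}$. I would set $I_H := \integral H_r\,dW_r$, $I_K := \integral K_r\,dW_r$, $J^1 := \integral\int_{\Rd}V^1(s,z)\overline{\mu}(ds,dz)$ and $J^2 := \integral\int_{\Rd}V^2(s,z)\overline{\mu}(ds,dz)$, all of which lie in $L^2$ by the hypotheses $H,K\in L^2_W(\Rd)$, $V^1,V^2\in L^2(\mu)$ and \eqref{eq:fbsdej}-type isometry bounds. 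The key observation is that, since the processes are adapted and the integrands only involve the interval $(t_i,t_{i+1}]$, conditioning on $\ca{F}_{t_i}$ is the natural localization.

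For \eqref{eq:property1}: first polarize, so it suffices to treat $H=K$ (the general case follows by applying the diagonal case to $H+K$ and $H-K$). The standard Itô isometry gives $\bb{E}(I_H^2\mid\ca{F}_{t_i}) = \bb{E}\big(\integral H_r^2\,dr\,\big|\,\ca{F}_{t_i}\big)$; here I would invoke the conditional form directly by writing $\bb{E}(I_H^2\mid\ca{F}_{t_i})$ as a limit of Riemann-type sums of $H$ over a refining partition of $(t_i,t_{i+1}]$, using that for $t_i\le s<u<v\le t_{i+1}$ the increments $W_v-W_u$ are independent of $\ca{F}_u\supseteq\ca{F}_{t_i}$ so cross terms vanish and square terms contribute $\bb{E}(H_u^2(v-u)\mid\ca{F}_{t_i})$. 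The second identity is the analogue for the compensated Poisson integral: the same partition argument works, now using that $\overline{\mu}((u,v],\cdot)$ has conditionally-centered independent-of-$\ca{F}_u$ increments and that $\bb{E}\big(\big(\int_u^v\int_{\Rd}V^1(s,z)\overline{\mu}(ds,dz)\big)\big(\int_u^v\int_{\Rd}V^2(s,z)\overline{\mu}(ds,dz)\big)\,\big|\,\ca{F}_u\big) \approx \bb{E}\big(V^1(u,\cdot)\cdot V^2(u,\cdot)\big)_{L^2(\lambda)}(v-u)$ via the compensation formula / Lévy–Itô isometry for $\overline{\mu}$.

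For the third (orthogonality) identity $\bb{E}_i(J^1 I_H)=0$: I would condition first on $\ca{F}_{t_i}\vee\sigma(\overline{\mu})$. Given the Poisson random measure, $J^1$ becomes $\ca{F}_{t_i}\vee\sigma(\overline\mu)$-measurable, while $\bb{E}\big(I_H\,\big|\,\ca{F}_{t_i}\vee\sigma(\overline\mu)\big)=0$ because $W$ is independent of $\overline\mu$ and $I_H$ is a centered stochastic integral increment (again a limiting sum of terms $H_u(W_v-W_u)$, each of which has conditional mean zero since $W_v-W_u\indep(\ca{F}_u\vee\sigma(\overline\mu))$ — here one uses that $H$ is predictable with respect to the Brownian filtration enlarged by $\overline\mu$, which holds because $H\in L^2_W(\Rd)$ in the stochastic basis $(\Omega,\ca{F},\bb{F},\p)$). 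Then the tower property gives $\bb{E}_i(J^1 I_H)=\bb{E}_i\big(J^1\,\bb{E}(I_H\mid\ca{F}_{t_i}\vee\sigma(\overline\mu))\big)=0$.

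The main obstacle I anticipate is not any single computation but making the ``conditional'' versions of these classical isometries rigorous without re-deriving stochastic integration: one must be careful that the integrands are predictable and square-integrable so that the approximation by elementary (simple) processes converges in $L^2$, and that conditioning commutes with these $L^2$ limits (which it does, since $\bb{E}_i$ is an $L^2$-contraction). The cleanest route is probably to state it for simple integrands — where all three identities are immediate from independence of increments and $\bb{E}(\overline\mu((u,v],A)\mid\ca{F}_u)=0$, $\bb{E}((W_v-W_u)\mid\ca{F}_u)=0$, $W\indep\overline\mu$ — and then pass to the limit. I would also note that the full statement, as used later, may only be needed for integrands of the special form arising in the scheme (deterministic-in-time or evaluations of NNs at $X^\pi$), in which case the independence and predictability are transparent.
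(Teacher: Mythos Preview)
Your proposal is correct in spirit and supplies far more detail than the paper's own proof, which consists of the single sentence ``Follows from the classical It\^o isometry.'' Your route via simple processes, independence of increments, and passage to the $L^2$ limit is exactly the standard way to upgrade the unconditional isometry to the conditional one, and the polarization step is fine.

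One small caveat on the third identity: your plan to condition on $\ca{F}_{t_i}\vee\sigma(\overline{\mu})$ and pull out $J^1$ tacitly assumes $J^1$ is measurable with respect to that $\sigma$-algebra. In the stated generality $V^1\in L^2(\mu)$ is $\bb{F}$-predictable and may depend on the Brownian path as well, so $J^1$ need not be $\ca{F}_{t_i}\vee\sigma(\overline{\mu})$-measurable (symmetrically, $H$ may depend on $\mu$). The cleaner general argument --- and presumably what the paper has in mind --- is that the Brownian integral $\big(\int_{t_i}^{t} H_r\,dW_r\big)_t$ and the compensated Poisson integral $\big(\int_{t_i}^{t}\!\int_{\Rd}V^1\,\overline{\mu}\big)_t$ are orthogonal $L^2(\bb{F})$-martingales: their quadratic covariation vanishes because $W$ is continuous and $\overline{\mu}$ is purely discontinuous. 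Hence $I_H J^1$ is itself an $\bb{F}$-martingale vanishing at $t_i$, and $\bb{E}_i(I_H J^1)=0$ follows directly. Your observation that the lemma is only applied in the paper to integrands of a rather special form (constants on $[t_i,t_{i+1}]$, or $\widehat{Z}$, $\widehat{U}$ from the martingale representation) is also well taken; for those the argument is transparent either way.
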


\begin{proof}
Follows from the classical Ito isommetry.
\end{proof}

\begin{lemma}(Conditional Fubini)\label{condFubini}
Let $H\in L^2_{\mu}(\Rd)$ be a $\bb{F}$-adapted process and $t>0$, then
\[
    \bb{E}\parent{\int_{\Rd}\int_{t_i}^{t_{i+1}} H(s,y)ds\lambda(dy)\bigg|\ca{F}_{t_i}} = \int_{\Rd}\bb{E}\parent{\int_{t_i}^{t_{i+1}} H(s,y)ds\bigg|\ca{F}_{t_i}}\lambda(dy).
\]
\end{lemma}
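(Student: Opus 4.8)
The plan is to establish the Conditional Fubini identity by reducing it to the classical (unconditional) Tonelli--Fubini theorem applied on an enlarged probability space. The key observation is that $\lambda$ is a \emph{finite} measure on $\Rd$, so the product measure $\p\otimes\lambda$ (or more precisely $\p\otimes (\mathrm{Leb}_{[t_i,t_{i+1}]})\otimes\lambda$) is finite on the relevant sets, and $H\in L^2_\mu(\Rd)\subseteq L^1_\mu(\Rd)$ guarantees absolute integrability, which is exactly what is needed to swap integrals.

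First I would fix an arbitrary set $A\in\ca{F}_{t_i}$ and test both sides against $\ind{A}$. By definition of conditional expectation it suffices to show
\[
\bb{E}\parent{\ind{A}\int_{\Rd}\int_{t_i}^{t_{i+1}}H(s,y)\,ds\,\lambda(dy)} = \bb{E}\parent{\ind{A}\int_{\Rd}\bb{E}\parent{\int_{t_i}^{t_{i+1}}H(s,y)\,ds\,\Big|\,\ca{F}_{t_i}}\lambda(dy)}.
\]
On the left, since $H\in L^1$ with respect to $\p\otimes ds\otimes\lambda(dy)$ (a finite measure times an integrable function, using the $L^2_\mu$ bound and Cauchy--Schwarz on the finite interval and finite $\lambda$), classical Fubini lets me pull $\int_{\Rd}\lambda(dy)$ outside the expectation and the $ds$-integral, giving $\int_{\Rd}\bb{E}\parent{\ind{A}\int_{t_i}^{t_{i+1}}H(s,y)\,ds}\lambda(dy)$. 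On the right, I use the defining property of $\bb{E}(\cdot\mid\ca{F}_{t_i})$ with $A\in\ca{F}_{t_i}$ to rewrite the inner expression, then apply Fubini once more to interchange $\int_{\Rd}\lambda(dy)$ with $\bb{E}$. The two sides then coincide term by term.

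The main obstacle—really the only subtle point—is the joint measurability of $(s,y,\omega)\mapsto H(s,y)(\omega)$ and of $(y,\omega)\mapsto \bb{E}(\int_{t_i}^{t_{i+1}}H(s,y)\,ds\mid\ca{F}_{t_i})(\omega)$, which is needed to invoke Fubini and to make sense of the outer $\lambda$-integral of a conditional expectation. The first follows from $H$ being $\bb{F}$-adapted and, implicitly, jointly measurable as part of the definition of $L^2_\mu(\Rd)$; the second requires choosing a jointly measurable version of the conditional expectation, which exists because one can write $\bb{E}(\cdot\mid\ca{F}_{t_i})$ as a measurable kernel operation, or simply approximate $H$ by simple functions of the form $\sum_k \ind{B_k}(y)\varphi_k(s,\omega)$ with $B_k$ Borel and $\varphi_k$ adapted, for which the identity is immediate by linearity, and pass to the limit using dominated convergence (dominating by $\int_{\Rd}\bb{E}(\int|H(s,y)|ds\mid\ca{F}_{t_i})\lambda(dy)\in L^1$). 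I would present the simple-function approximation route as it is the cleanest and sidesteps any delicate version-selection argument.
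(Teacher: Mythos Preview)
Your approach is essentially identical to the paper's: both fix $A\in\ca{F}_{t_i}$, use $H\in L^2_\mu\subset L^1$ on the finite product space $\Omega\times[t_i,t_{i+1}]\times\Rd$, and apply classical Fubini together with the defining property of conditional expectation to swap the $\lambda(dy)$-integral with $\bb{E}_i$. The only difference is that the paper simply invokes Fubini directly without discussing joint measurability or a simple-function approximation; your extra care on those points is not wrong, but it is more than the paper deems necessary.
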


\begin{proof}The proof is standard, but we included it by the sake of completeness. Let $A\in\ca{F}_{t_i}$, we have to prove that
    \[
        \int_A \parent{\int_{\Rd} \bb{E}_i\parent{\integral H(s,y)ds}\lambda(dy) }d\p (\omega) =
        \int_{A}\parent{\int_{\Rd}\integral H(s,y)(\omega)ds\lambda(dy)}d\p (\omega).
        %{\color{red}  \int_{\Rd} \parent{ \int_A \integral H(s,y)ds d\p (\omega) } \lambda(dy) .} 
    \]
    Note that because of $H\in L^2_{\mu}(\Rd)$,
    \begin{align*}
        \int_\Omega \integral \!\! \int_{\Rd} \! |H(s,y)(\omega)|^2  \lambda(dy) ds d\p (\omega) < \infty;
    \end{align*}  
which means that $H$ can be seen as an element of $\in L^2(\Omega\times [t_i,t_{i+1}]\times \Rd)\subset L^1(\Omega\times [t_i,t_{i+1}]\times \Rd)$, both spaces endowed with the correspondent finite product measure. Then we can use classical Fubini theorem:
    \begin{align*}
        \int_{A}\parent{\int_{\Rd}\integral H(s,y)(\omega)ds\lambda(dy)}d\p (\omega) &= \int_{\Rd}\parent{\int_{A} \integral H(s,y)(\omega)dsd\p (\omega)}\lambda(dy)\\
        &=\int_{\Rd}\parent{\int_{A} \bb{E}_i\parent{\integral H(s,y)(\omega)ds}d\p (\omega)}\lambda(dy)\\
        &= \int_{A}\parent{\int_{\Rd} \bb{E}_i\parent{\integral H(s,y)(\omega)ds}\lambda(dy)}d\p (\omega).
    \end{align*}
This finishes the proof.
\end{proof}

Recall that a martingale $(M_t)_t$ a sequence of random variables (i.e., a stochastic process) for which, at a particular time, the conditional expectation of the next value in the sequence, regardless of all prior values, is equal to the present value. 

\begin{Cl}\label{lem:martingales}
If $(M_t)_t$ is a martingale, and $\Delta M_i:=M_{t_{i+1}}-M_{t_i}$, then the mapping $X\mapsto\bb{E}_i(X\Delta M_i)$ vanishes on $L^2(\ca{F}_{t_i})$.

\begin{proof}
Given $X\in L^2(\ca{F}_{t_i})$, by using that this variable is $\ca{F}_{t_i}$-measurable and $\bb{E}(M_{t_{i+1}}|\ca{F}_{t_i})=M_{t_i}$,
\begin{align*}
    \bb{E}(X\Delta M_i|\ca{F}_{t_i})=X\bb{E}(\Delta M_i|F_{t_i})=0,
\end{align*}
as required.
\end{proof}

\end{Cl}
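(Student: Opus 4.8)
The plan is to verify the vanishing directly from the defining property of conditional expectation together with the martingale property of $(M_t)_t$. Fix $i$ and let $X\in L^2(\ca{F}_{t_i})$. The first step is to record that $X\Delta M_i$ is integrable: in every situation where we apply the claim, $\Delta M_i$ is a square-integrable increment (a Brownian increment $\Delta W_i$, or $\Delta M_i=\overline{\mu}((t_i,t_{i+1}],\Rd)$, which lies in $L^2$ since $\lambda$ is a finite Lévy measure), so $X\Delta M_i\in L^1$ by Cauchy--Schwarz. This is all that is needed to justify the manipulations that follow.

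Next, since $X$ is $\ca{F}_{t_i}$-measurable, the ``take out what is known'' property of conditional expectation gives $\bb{E}_i(X\Delta M_i)=X\,\bb{E}_i(\Delta M_i)$ almost surely. Finally, $\bb{E}_i(\Delta M_i)=\bb{E}(M_{t_{i+1}}\mid\ca{F}_{t_i})-M_{t_i}=M_{t_i}-M_{t_i}=0$ by the martingale property of $M$ and the fact that $M_{t_i}$ is $\ca{F}_{t_i}$-measurable. Combining the two identities yields $\bb{E}_i(X\Delta M_i)=0$, i.e. the map $X\mapsto\bb{E}_i(X\Delta M_i)$ is identically zero on $L^2(\ca{F}_{t_i})$.

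There is essentially no obstacle here; the only point deserving a moment's care is integrability, which is what legitimizes the conditional-expectation identities above. If one wanted the statement for a completely arbitrary martingale, without an $L^2$ hypothesis on the increments, the cleanest route would be to prove it first for bounded $X$ --- where ``take out what is known'' is unconditional --- and then extend to $X\in L^2(\ca{F}_{t_i})$ by truncation $X_n=X\ind{|X|\le n}$ together with a limiting argument, provided $\Delta M_i$ is integrable. For the uses in this paper the square-integrable case is all that is invoked, so I would simply present the three-line computation above.
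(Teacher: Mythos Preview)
Your argument is correct and follows the same route as the paper: pull out the $\ca{F}_{t_i}$-measurable factor $X$ and use $\bb{E}_i(\Delta M_i)=0$ from the martingale property. The only addition is your explicit integrability check via Cauchy--Schwarz, which the paper leaves implicit.
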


\subsection{Measuring the error}
Let $i\in \{0,\ldots, N-1\}$, as stated in Subsection \ref{Discretization}. We follow the procedure taken in \cite{DBS}, with key modifications. Let us use the ideas of \cite{bruno} to define $\ca{F}$-adapted discrete processes
\begin{align}
    \widehat{\ca{V}}_{t_i}&=\bb{E}_i\left(\widehat{\ca{U}}_{i+1}(X^\pi_{t_{i+1}})\right) + f\left( t_i,X^\pi_{t_i},\widehat{\ca{V}}_{t_i}, \gb{Z}_{t_i},\gb{\Gamma}_{t_i}\right)h,
    \label{eq:v}
    \\
    \gb{Z}_{t_i}&= \dfrac1{h}\bb{E}_i \left( \widehat{\ca{U}}_{i+1}(X^\pi_{t_{i+1}})\Delta W_{i} \right),\\
    \gb{\Gamma}_{t_i}&= \dfrac1{h} \bb{E}_i\left(\widehat{\ca{U}}_{i+1}(X^\pi_{t_{i+1}})\Delta M_{i}\right),
\end{align}
where $\widehat{\ca{V}}_{t_i}$ is well-defined for sufficiently small $h$ by Lemma \ref{lemma:fixedPoint} and the variables $\gb{Z}_{t_i}$, $\gb{\Gamma}_{t_i}$ are defined below.

%Then, for each $i=0,\ldots, N$, one can find deterministic, %measurable functions (\cite{sm} Corollary 9.4.11 pg 316) $v_i, %z_i, \gamma_i$ such that {\color{red} Favor justificar con una %referencia que permita asegurar esto sin problemas.} %{\color{blue} Esto lo pienso mover al momento en que se habla de %la relación entre $U$ y $\Gamma$. El resultado es de Doob y %aparece por ejemplo en el libro de medida de San Martín. También %creo que quizá no sea necesario mencionar la existencia de estas %funcions medibles, finalmente las redes me están ayudando a %aproximar las esperanzas condicionales del esquema de %\cite{bruno}}
%\begin{equation}\label{representation}
%    \widehat{\ca{V}}_{t_i} = v_i(X^\pi_{t_i}),\quad \gb{Z}_{t_i} %= z_i(X^\pi_{t_i}),\quad \gb{\Gamma}_{t_i} = %\gamma_i(X^\pi_{t_i}).
%\end{equation}

\begin{lemma}\label{lemma:fixedPoint}
    The process $\widehat{\ca{V}}_{t_i}$ is well-defined.
    \begin{proof}
        Let $i\in\set{0,...,N-1}$ and $g:L^2\to L^2$ be defined as
        \begin{align*}
            g(Y)=\bb{E}_i\left(\widehat{\ca{U}}_{i+1}(X^\pi_{t_{i+1}})\right) + f\left( t_i,X^\pi_{t_i},Y, \gb{Z}_{t_i},\gb{\Gamma}_{t_i}\right)h. %\bb{E}_i(\widehat{\ca{U}}_{i+1}(X^\pi_{t_{i+1}})) + f\big(t_i,X^\pi_{t_i},Y, \gb{Z}_{t_i},\gb{\Gamma}_{t_i}\big)\Delta t_i.
        \end{align*}
        This function is well-defined by the properties of $f$ and the Lemma \ref{lemma:nnBound}. Let $Y,\overline Y\in L^2$, then $\p$ a.s $g(Y)-g(\overline Y) \le h |Y-\overline Y|$, therefore
        \begin{align*}
            \norm{g(Y)-g(\overline  Y)}_{L^2} \le h ^2\norm{Y-\overline Y}_{L^2}
        \end{align*}
        Taking sufficiently small $h$ we can see that this function is a contraction on $L^2$, and therefore, by applying Banach's fixed point theorem, we conclude the proof.
    \end{proof}
\end{lemma}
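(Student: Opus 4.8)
The plan is to realize $\widehat{\ca{V}}_{t_i}$ as the unique fixed point, in the complete space $L^2=L^2(\Omega,\ca{F},\p)$, of the map
\[
g(Y)=\bb{E}_i\!\left(\widehat{\ca{U}}_{i+1}(X^\pi_{t_{i+1}})\right)+h\,f\!\left(t_i,X^\pi_{t_i},Y,\gb{Z}_{t_i},\gb{\Gamma}_{t_i}\right),
\]
and to conclude via Banach's fixed point theorem. Within this lemma $\widehat{\ca{U}}_{i+1}$, $\gb{Z}_{t_i}$ and $\gb{\Gamma}_{t_i}$ are already constructed, so they act as fixed data and the only unknown is $Y=\widehat{\ca{V}}_{t_i}$; moreover $g$ has $\ca{F}_{t_i}$-measurable values, so the resulting fixed point will automatically lie in the $\ca{F}_{t_i}$-measurable (closed) subspace of $L^2$.

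First I would check that $g$ maps $L^2$ into itself. The Euler step \eqref{eq:euler-x2} writes $X^\pi_{t_{i+1}}$ in terms of $X^\pi_{t_i}$, the Gaussian increment $\Delta W_i$ and the finite-intensity jump term $\overline\mu((t_i,t_{i+1}],\cdot)$; since $b,\sigma,\beta$ have at most linear growth by \eqref{CC}, one gets $X^\pi_{t_{i+1}}\in L^p$ for every finite $p$. The activation underlying $\widehat{\ca{U}}_{i+1}$ has linear growth, so (as in Lemma \ref{lemma:nnBound}, with the obvious strengthening to higher moments) $\widehat{\ca{U}}_{i+1}(X^\pi_{t_{i+1}})\in L^p$ for all finite $p$; in particular its $\ca{F}_{t_i}$-conditional expectation is in $L^2$, and $\gb{Z}_{t_i}=\tfrac1h\bb{E}_i(\widehat{\ca{U}}_{i+1}(X^\pi_{t_{i+1}})\Delta W_i)$, $\gb{\Gamma}_{t_i}=\tfrac1h\bb{E}_i(\widehat{\ca{U}}_{i+1}(X^\pi_{t_{i+1}})\Delta M_i)$ are in $L^2$ by Jensen and Cauchy--Schwarz (using $\Delta W_i,\Delta M_i\in L^4$). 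Finally, the Hölder hypothesis on $f$ in \eqref{CC} forces the linear growth $|f(t_i,x,y,z,w)|\lesssim 1+|x|+|y|+|z|+|w|$, so $f(t_i,X^\pi_{t_i},Y,\gb{Z}_{t_i},\gb{\Gamma}_{t_i})\in L^2$ whenever $Y\in L^2$, whence $g(Y)\in L^2$.

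Next I would show $g$ is a strict contraction for $h$ small. For $Y,\overline Y\in L^2$,
\[
g(Y)-g(\overline Y)=h\Big(f(t_i,X^\pi_{t_i},Y,\gb{Z}_{t_i},\gb{\Gamma}_{t_i})-f(t_i,X^\pi_{t_i},\overline Y,\gb{Z}_{t_i},\gb{\Gamma}_{t_i})\Big),
\]
so the Lipschitz-in-$y$ part of \eqref{CC} gives $|g(Y)-g(\overline Y)|\le hK\,|Y-\overline Y|$ $\p$-a.s., hence $\norm{g(Y)-g(\overline Y)}_{L^2}\le hK\,\norm{Y-\overline Y}_{L^2}$. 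Choosing $h$ with $hK<1$, i.e. $N>KT$, makes $g$ a contraction on $L^2$, and Banach's fixed point theorem produces a unique $\widehat{\ca{V}}_{t_i}\in L^2$ solving \eqref{eq:v}.

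The only point requiring genuine care is the well-definedness of $g$ — that $\gb{Z}_{t_i}$, $\gb{\Gamma}_{t_i}$, and then the composition with $f$, really lie in $L^2$ — which hinges on the linear-growth activation propagating all moments of $X^\pi_{t_{i+1}}$ through the network; this is exactly why Lemma \ref{lemma:nnBound} (in its $L^p$ form) is invoked, and everything else is a routine consequence of the Lipschitz structure of \eqref{CC}. I would also record explicitly that the contraction constant is $hK$, which is what fixes the smallness threshold $h<1/K$ (equivalently $N>KT$) used implicitly throughout the discretization.
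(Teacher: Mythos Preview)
Your argument is correct and follows exactly the same route as the paper: realize $\widehat{\ca V}_{t_i}$ as the fixed point in $L^2$ of the map $g(Y)=\bb E_i(\widehat{\ca U}_{i+1}(X^\pi_{t_{i+1}}))+h\,f(t_i,X^\pi_{t_i},Y,\gb Z_{t_i},\gb\Gamma_{t_i})$ and apply Banach. You are in fact more careful than the paper's sketch---you justify $g:L^2\to L^2$ in detail, note the $\ca F_{t_i}$-measurability of the fixed point, and record the contraction constant correctly as $hK$ (the paper writes $h^2$, an evident slip).
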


For fixed $i\in \set{0,...,N}$, let $N_t$ be a process defined as $N_t := \bb{E}\parent{\ug{i+1}(\xscheme{i+1})\Big|\ca{F}_t}$ for $t\in [t_i,t_{i+1}]$. It is not difficult to see that $N_t$ is a martingale and therefore, by Martingale Representation Theorem (see Lemma \ref{MRT}), there exist $(\widehat{Z},\widehat{U})\in L^2_{\mu}\times L^2_{W}$ such that
\[
    N_t = N_{t_i} + \int_{t_i}^{t}\widehat{Z}_s \cdot dW_s + \int_{t_i}^{t}\int_{\Rd} \widehat{U}_s (y) \overline{\mu}(ds,dy).
\]
By taking $t=t_{i+1}$ and using \eqref{Esperanza_condicional},
\[
    \ug{i+1}(\xscheme{i+1}) = \bb{E}_i\parent{\ug{i+1}(\xscheme{i+1})} + \integral\widehat{Z}_s \cdot dW_s + \integral\int_{\Rd} \widehat{U}_s (y) \overline{\mu}(ds,dy).
\]
By multiplying by $\Delta W_i$ and $\Delta M_i$, then taking $\bb{E}_i$ and using Itô isometry,
\begin{align*}
    \gb{Z}_{t_i} &= \frac{1}{h} \bb{E}_i\parent{\integral\widehat{Z}_sds},\\
    \gb{\Gamma}_{t_i} &= \frac{1}{h} \bb{E}_i\parent{\integral\int_{\Rd}\widehat{U}_s(y)\lambda(dy)ds}.
\end{align*}
Let 
\begin{equation}\label{Ugorrobarra}
 \gb{U}_{t_i}(y):=\frac1h\bb{E}_i\parent{\integral \widehat{U}_s(y)ds}.
\end{equation}
By Lemma \ref{condFubini} one can see that
\begin{equation}\label{gamma_ti}
    \gb{\Gamma}_{t_i} = \frac1h \bb{E}_i\parent{\integral\int_{\Rd}\widehat{U}_s(y)\lambda(dy)ds}  = \int_{\Rd} \gb{U}_{t_i}(y) \lambda(dy).
\end{equation}
The last equality can be seen as an analogous to (\ref{eq:gamma}) and makes sense with the notation $\gb{\Gamma}_{t_i}=\prom{\gb{U}_{t_i}}$. Also, we can establish the following useful bound: 
\[
    \esp{\gb{\Gamma}_{t_i} - \prom{\ca{G}}_i (\xscheme{i};\theta)}^2 \lesssim \bb{E}\parent{\norm{\overline{\widehat{U}}_{t_i}(\cdot) - \ca{G}_i(\xscheme{i},\cdot,\theta)}^2_{L^2(\lambda)} }.
\]
Indeed, from \eqref{gamma_ti} and \eqref{calG}, H\"older and the fact that $\lambda(dy)$ is a finite measure
\[
\begin{aligned}
\esp{\gb{\Gamma}_{t_i} - \prom{\ca{G}}_i (\xscheme{i};\theta)}^2 = &~{} \esp{\int_{\Rd} \gb{U}_{t_i}(y) \lambda(dy) -  \int_{\mathbb R^d}\ca{G}_i(\xscheme{i},y;\theta)\lambda(dy)}^2 \\
\le  &~{} C \bb{E}\parent{\norm{\overline{\widehat{U}}_{t_i}(\cdot) - \ca{G}_i(\xscheme{i},\cdot,\theta)}^2_{L^2(\lambda)} }.
\end{aligned}
\]

We can find deterministic functions $v_i, z_i, \gamma_i$ such that $v_i(\xscheme{i}) = \widehat{\ca{V}}_{t_i}, z_i(\xscheme{i}) = \gb{Z}_{t_i}$ and $\gamma_i(y, \xscheme{i}) = \gb{U}_{t_i}(y)$. With the previous setup, the natural extension of the terms to estimate the error of the scheme shown on \cite{DBS} must be
\begin{equation}\label{errores}
\begin{aligned}
    \ca{E}_i^v &= \inf_{\xi} \bb{E}\barras{v_i(\xscheme{i})-\ca{U}_i(\xscheme{i};\xi)}^2, \quad 
    \ca{E}_i^z = \inf_{\xi} \bb{E}\barras{ z_i(\xscheme{i})-\ca{Z}_i(\xscheme{i};\xi)}^2\\
\ca{E}_i^{\gamma} &= \inf_{\xi} \bb{E}\parent{ \int_{\Rd} \left| \gamma_i(y, \xscheme{i})-\ca{G}_i(\xscheme{i},y;\xi) \right|^2\lambda(dy)}.
\end{aligned}
\end{equation}
The expected values can be written as a integral with respect a probability measure in $\Rd$ and therefore, applying the Theorem \ref{uat}, these quantities can be made arbitrarily small as the dimension of the parameters increases.\\

The following results will be useful in the proof of the main result.

\begin{prop}[\cite{bruno}, Prop. 2.1]\label{lemma:y-bound}
    There exists a constant $C>0$ independent of the step $h$ such that 
    \begin{align*}
        \sum_{i=0}^{N-1}\bb{E}\parent{\integral|Y_s-Y_{t_i}|^2ds} \leq Ch.
    \end{align*}
\end{prop}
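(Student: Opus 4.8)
The plan is to reduce the statement to the a priori regularity estimate for the $Y$-component of the FBSDEJ already quoted in \eqref{eq: sol-bound}, namely
\[
\bb{E}\Big(\sup_{s\le u\le t}|Y_u-Y_s|^2\Big)\le C_2\Big[(1+|x|^2)|t-s|^2+\bb{E}\!\int_s^t|Z_r|^2dr+\bb{E}\!\int_s^t\!\!\int_{\Rd}|U_r(y)|^2\lambda(dy)dr\Big],
\]
applied on each cell $[t_i,t_{i+1}]$ of the partition $\pi$, followed by a summation over $i$ that telescopes the $Z$- and $U$-integrals to finite quantities.

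First I would fix $i\in\set{0,\dots,N-1}$. Since $|Y_s-Y_{t_i}|^2\le \sup_{t_i\le u\le t_{i+1}}|Y_u-Y_{t_i}|^2$ for every $s\in[t_i,t_{i+1}]$, integrating in $s$ over an interval of length $h$ and taking expectations gives
\[
\bb{E}\Big(\integral|Y_s-Y_{t_i}|^2ds\Big)\le h\,\bb{E}\Big(\sup_{t_i\le u\le t_{i+1}}|Y_u-Y_{t_i}|^2\Big).
\]
Applying the a priori bound with $s=t_i$ and $t=t_{i+1}$ yields
\[
\bb{E}\Big(\integral|Y_s-Y_{t_i}|^2ds\Big)\le hC_2\Big[(1+|x|^2)h^2+\bb{E}\!\integral|Z_r|^2dr+\bb{E}\!\integral\!\!\int_{\Rd}|U_r(y)|^2\lambda(dy)dr\Big].
\]

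Then I would sum over $i=0,\dots,N-1$ and use that $Nh=T$: the drift term contributes $\sum_i h^2=Th$, while the other two sums telescope to $\bb{E}\int_0^T|Z_r|^2dr=\norm{Z}_W^2$ and $\bb{E}\int_0^T\int_{\Rd}|U_r(y)|^2\lambda(dy)dr=\norm{U}_\mu^2$. Hence
\[
\sum_{i=0}^{N-1}\bb{E}\Big(\integral|Y_s-Y_{t_i}|^2ds\Big)\le hC_2\Big[(1+|x|^2)Th+\norm{Z}_W^2+\norm{U}_\mu^2\Big].
\]
By \eqref{eq: sol-bound} one has $\norm{Z}_W^2+\norm{U}_\mu^2\le 2C_2(1+|x|^2)<\infty$, and for $h\le T$ one has $Th\le T^2$, so the bracket is bounded by a constant $C$ depending only on $T$, $x$ and $C_2$, independent of the partition; this is exactly the asserted estimate.

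There is no genuinely hard step: all the analytic content sits in the a priori estimate \eqref{eq: sol-bound}, which we are entitled to assume. The only points needing a little care are (i) passing from a time integral to $h$ times a supremum, which is what turns the $O(1)$ modulus of continuity into the claimed $O(h)$; (ii) using $Nh=T$ so that $\sum_i h^2$ stays bounded; and (iii) invoking the finiteness of $\norm{Z}_W^2$ and $\norm{U}_\mu^2$ from the well-posedness of the FBSDEJ. Should a self-contained argument be preferred, the a priori estimate itself follows by writing $Y_s-Y_{t_i}=-\int_{t_i}^sf(\Theta_r)dr+\int_{t_i}^sZ_r\cdot dW_r+\int_{t_i}^s\int_{\Rd}U_r(y)\overline{\mu}(dr,dy)$, applying $(a+b+c)^2\le 3(a^2+b^2+c^2)$, Cauchy--Schwarz on the drift, Doob's inequality and the Itô isometries (Lemma \ref{lem:Ito}) on the two martingale terms, and the linear growth of $f$; but since the needed bound is already listed, the short argument suffices.
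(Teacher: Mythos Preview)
Your argument is correct. The paper does not actually prove this proposition; it simply imports it from \cite{bruno} (Proposition 2.1 there), so there is no ``paper's own proof'' to compare against beyond that citation. What you have done is supply a short self-contained derivation from the a priori estimate \eqref{eq: sol-bound} already quoted in the paper, which is a legitimate and efficient route: bound the time integral by $h$ times the supremum increment, apply the $Y$-regularity bound on each cell, and sum so that the $Z$- and $U$-contributions collapse to the global $L^2$ norms, which are finite by well-posedness. The only minor cosmetic point is that the bound $\norm{Z}_W^2+\norm{U}_\mu^2\le 2C_2(1+|x|^2)$ follows directly from the first line of \eqref{eq: sol-bound} (the factor $2$ is unnecessary but harmless), and you could simply write $\norm{Z}_W^2+\norm{U}_\mu^2\le \|(X,Y,Z,U)\|_{\ca{S}^2\times\ca{B}^2}^2\le C_2(1+|x|^2)$.
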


We also need the following boundedness result.
\begin{lemma}\label{lemma:f-int-bound}
    Let $\Theta_s = (s,X_s,Y_s,Z_s,\Gamma_s)$ the true solution to (\ref{eq:fbsdej}). Then
    \[
        \bb{E}\parent{\int_0^T |f(\Theta_s)|^2ds} < \infty .
    \]
    \begin{proof}
        By Lipschitz condition on $f$ we have that
        
        \[
            |f(s,X_s,Y_s,Z_s,\Gamma_s)|^2 \lesssim |f(s,0,0,0,0)|^2 + |X_s|^2 + |Y_s|^2 + |Z_s|^2 + |\Gamma_s|^2
        \]
        
        and
        
        \[
            \sup_{s\in I_{T}} |f(s,0,0,0,0)|^2 < \infty
        \]
        
        then, integrating on $[0,T]$ and taking expected value,
        \[
            \bb{E}\parent{\int_0^T |f(s,X_s,Y_s,Z_s,\Gamma_s)|^2}\lesssim T + ||(X,Y,Z,U)||_{\ca{S}^2\times\ca{B}^2}^2 <\infty
        \]
    where we used that the initial value $x\in \Rd$ that appears on equation (\ref{eq: sol-bound}) is fixed.
    \end{proof}
\end{lemma}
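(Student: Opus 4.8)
The plan is to bound $|f(\Theta_s)|^2$ pointwise by the Lipschitz property of $f$ centered at the origin, and then integrate and take expectations, using the a priori bounds on $(X,Y,Z,\Gamma)$ coming from condition \textbf{(C)} and the estimate \eqref{eq: sol-bound}. First I would write, using the H\"older continuity of $f$ from \textbf{(C)} (which in particular gives Lipschitz continuity in each of the variables $y,z,w$ uniformly in $(s,x)$, and $\tfrac12$-H\"older in time),
\[
|f(s,X_s,Y_s,Z_s,\Gamma_s)| \le |f(s,0,0,0,0)| + K\big(|X_s| + |Y_s| + |Z_s| + |\Gamma_s|\big),
\]
and hence, by the elementary inequality $(a_1+\cdots+a_5)^2 \le 5(a_1^2+\cdots+a_5^2)$,
\[
|f(\Theta_s)|^2 \lesssim |f(s,0,0,0,0)|^2 + |X_s|^2 + |Y_s|^2 + |Z_s|^2 + |\Gamma_s|^2.
\]

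Next I would control the inhomogeneous term $|f(s,0,0,0,0)|^2$. Again by the $\tfrac12$-H\"older continuity in time, $|f(s,0,0,0,0) - f(0,0,0,0,0)| \le K s^{1/2} \le K T^{1/2}$, so $\sup_{s\in I_T}|f(s,0,0,0,0)|^2 \le 2|f(0,0,0,0,0)|^2 + 2K^2 T < \infty$. Integrating the pointwise bound over $[0,T]$ and taking expectations then yields
\[
\bb{E}\parent{\int_0^T |f(\Theta_s)|^2 ds} \lesssim T\sup_{s\in I_T}|f(s,0,0,0,0)|^2 + \bb{E}\parent{\int_0^T \big(|X_s|^2 + |Y_s|^2 + |Z_s|^2 + |\Gamma_s|^2\big)ds}.
\]
The terms involving $X$, $Y$, $Z$, $U$ are then all dominated: $\bb{E}\int_0^T |X_s|^2 ds \le T\|X\|_{\ca{S}^2}^2$ and similarly $\bb{E}\int_0^T |Y_s|^2 ds \le T\|Y\|_{\ca{S}^2}^2$, while $\bb{E}\int_0^T |Z_s|^2 ds = \|Z\|_W^2$ directly, and for the nonlocal term one uses \eqref{eq:gamma}, Jensen (or Cauchy--Schwarz) and finiteness of $\lambda$ to get $|\Gamma_s|^2 = \big|\int_{\Rd} U_s(y)\lambda(dy)\big|^2 \le \lambda(\Rd)\int_{\Rd}|U_s(y)|^2\lambda(dy)$, whence $\bb{E}\int_0^T |\Gamma_s|^2 ds \le \lambda(\Rd)\|U\|_\mu^2$. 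All of $\|X\|_{\ca{S}^2}, \|Y\|_{\ca{S}^2}, \|Z\|_W, \|U\|_\mu$ are finite by \eqref{eq: sol-bound} since the starting point $x$ is fixed, which closes the argument.

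The computation is essentially routine; the only point requiring a little care — and the step I would flag as the main (minor) obstacle — is the treatment of $\Gamma_s$, since it does not appear directly in $\|(X,Y,Z,U)\|_{\ca{S}^2\times\ca{B}^2}$ and must be re-expressed via \eqref{eq:gamma} and controlled by $\|U\|_\mu$ using that $\lambda$ is a \emph{finite} measure (this is exactly the kind of place where finiteness of $\lambda$, rather than merely the L\'evy integrability condition, is used). Everything else is a direct application of the a priori bounds and the Lipschitz/H\"older structure of $f$ from \textbf{(C)}.
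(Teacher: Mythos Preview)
Your proof is correct and follows essentially the same approach as the paper's own proof: bound $|f(\Theta_s)|^2$ pointwise via the Lipschitz/H\"older property of $f$ centered at the origin, then integrate and invoke the a priori bound \eqref{eq: sol-bound}. Your version is in fact more detailed than the paper's, which simply writes $\|(X,Y,Z,U)\|_{\ca{S}^2\times\ca{B}^2}^2$ without spelling out the passage from $|\Gamma_s|^2$ to $\|U\|_\mu^2$ via Cauchy--Schwarz and finiteness of $\lambda$, nor the reason why $\sup_{s\in I_T}|f(s,0,0,0,0)|^2<\infty$; both of these you justify explicitly.
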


For the next result we use the additional assumption stated in \eqref{CC}: for each $y\in\Rd$, the map $x\to\beta(x,y)$ admits a Jacobian matrix $\nabla\beta(x, y)$ such that the function $a(x,\xi; y):=\xi^T (\nabla\beta(x, y) + I)\xi$ satisfies $a(x,\xi;y)\ge |\xi|^2 K^{-1}$ or $a(x,\xi;y)\le -|\xi|^2 K^{-1}$, uniformly on $x,y\in \Rd$. As usual, big $O(h)$ notation means the existence of a fixed constant $C>0$ independent of small $h$ such that the quantity is bounded by $Ch.$
\begin{prop}\label{chicos}
    Under the additional assumption in \eqref{CC}, one has
    \begin{align*}
        \varepsilon^\Gamma(h) = O(h) \qquad and \qquad \varepsilon^Z (h) = O(h).
    \end{align*}
\end{prop}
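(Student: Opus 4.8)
The plan is to establish the two estimates $\varepsilon^{\Gamma}(h) = O(h)$ and $\varepsilon^{Z}(h) = O(h)$ by reducing them to the $L^2$-regularity estimates already available in the literature, specifically from Bouchard--Elie \cite{bruno} and the argument of Zhang \cite{zha04} adapted to the jump setting. Recall from \eqref{errores0} that $\varepsilon^{Z}(h)$ and $\varepsilon^{\Gamma}(h)$ measure how well the processes $Z_t$ and $\Gamma_t$ can be approximated by their averaged conditional expectations $\overline Z_{t_i}$ and $\overline\Gamma_{t_i}$ over each subinterval $[t_i, t_{i+1}]$. The key point is that $\overline Z_{t_i}$ (resp.\ $\overline\Gamma_{t_i}$) is precisely the $L^2(dt\otimes d\p)$-orthogonal projection of $Z$ (resp.\ $\Gamma$) restricted to $[t_i,t_{i+1}]$ onto the subspace of processes that are $\mathcal F_{t_i}$-measurable and constant in $t$ on that interval; hence
\[
\varepsilon^{Z}(h) = \inf_{(\zeta_i)_i}\bb{E}\parent{\sum_{i=0}^{N-1}\integral |Z_t - \zeta_i|^2\,dt},
\]
where the infimum runs over families $\zeta_i \in L^2(\mathcal F_{t_i};\Rd)$, and analogously for $\varepsilon^{\Gamma}$. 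So it suffices to exhibit \emph{one} good choice of piecewise-constant adapted approximant.

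First I would treat $\varepsilon^{Z}$. The natural candidate is $\zeta_i = \frac1h\bb{E}_i(\integral Z_t\,dt)$, but rather than reproving path regularity of $Z$ from scratch I would invoke the $L^2$-regularity result for the $Z$-component of a Lipschitz FBSDEJ under the standing assumptions \textbf{(C)}, which is exactly \cite[Prop.~2.1 / Thm~3.1]{bruno} (the jump analogue of Zhang's regularity theorem): it asserts $\bb{E}\big(\sum_i \integral |Z_t - \overline Z_{t_i}|^2 dt\big) = O(h)$. The engine behind that estimate is Itô's formula applied to $|Y_t - Y_{t_i}|^2$ combined with the martingale representation of $Y$ and the a priori bounds \eqref{eq: sol-bound}; Proposition \ref{lemma:y-bound} and Lemma \ref{lemma:f-int-bound} supply precisely the ingredients ($\sum_i\bb{E}\integral |Y_s - Y_{t_i}|^2 ds \le Ch$ and $\bb{E}\int_0^T|f(\Theta_s)|^2 ds < \infty$) needed to run that computation here, so I would reproduce the short telescoping argument: write $Y_{t_{i+1}} - Y_{t_i} = -\integral f(\Theta_s)ds + \integral Z_s dW_s + \integral\int_{\Rd} U_s(y)\overline\mu(ds,dy)$, take $\bb{E}_i|\cdot|^2$, use the conditional Itô isometry (Lemma \ref{lem:Ito}) to expose $\bb{E}_i\integral(|Z_s|^2 + \|U_s\|^2_{L^2(\lambda)})ds$, and then subtract the corresponding identity for the projections to get $\varepsilon^Z(h) + \varepsilon^{\mu}(h) \le C\big(h + \sum_i\bb{E}\integral|Y_s - Y_{t_i}|^2 ds\big) = O(h)$, where $\varepsilon^\mu(h)$ is the analogous quantity for $U$.

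For $\varepsilon^{\Gamma}$, the new ingredient relative to \cite{DBS}, I would note that $\Gamma_t = \int_{\Rd} U_t(y)\lambda(dy) = \prom{U_t}$ is a bounded linear image (via integration against the finite measure $\lambda$) of $U_t$. By Cauchy--Schwarz and finiteness of $\lambda$,
\[
|\Gamma_t - \overline\Gamma_{t_i}|^2 = \Big|\int_{\Rd}\big(U_t(y) - \overline U_{t_i}(y)\big)\lambda(dy)\Big|^2 \le \lambda(\Rd)\int_{\Rd}|U_t(y) - \overline U_{t_i}(y)|^2\lambda(dy),
\]
where $\overline U_{t_i}(y) = \frac1h\bb{E}_i\integral U_t(y)dt$; summing over $i$ and taking expectations gives $\varepsilon^{\Gamma}(h) \le \lambda(\Rd)\,\varepsilon^{\mu}(h)$. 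Since $\varepsilon^{\mu}(h) = O(h)$ is delivered by the same Bouchard--Elie $L^2$-regularity theorem for the $U$-component (again under \textbf{(C)}, this is where the non-degeneracy / invertibility condition $a(x,\xi;y)\ge |\xi|^2 K^{-1}$ or $\le -|\xi|^2K^{-1}$ enters, as it underpins the regularity of the jump component in \cite{bruno}), we conclude $\varepsilon^{\Gamma}(h) = O(h)$.

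The main obstacle is genuinely citing the $L^2$-regularity estimate for $U$ in the correct form: the classical Zhang argument in \cite{zha04} and its use in \cite{DBS} is purely Brownian and gives no $\varepsilon^{\mu}$ or $\varepsilon^{\Gamma}$ term, so everything hinges on \cite{bruno} providing $\bb{E}\big(\sum_i\integral\|U_t - \overline U_{t_i}\|^2_{L^2(\lambda)}dt\big)=O(h)$ under assumptions compatible with \textbf{(C)} --- in particular the last, technical item of \textbf{(C)} (the condition on $a(x,\xi;y)$) is exactly what guarantees the map $x\mapsto x+\beta(x,y)$ is a diffeomorphism with controlled inverse, which is what makes the jump-regularity estimate in \cite{bruno} go through. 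If one wanted a self-contained proof, the hard part would be re-deriving that jump-component regularity, i.e.\ controlling $\bb{E}_i\integral\|U_t - U_{t_i}\|^2_{L^2(\lambda)}dt$ by $Ch$, which requires representing $U$ through a Malliavin-type or flow-based argument on the forward process and is substantially more delicate than the Brownian case; I would instead keep this at the level of a citation plus the two elementary reductions above.
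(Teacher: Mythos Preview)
Your proposal is correct and takes essentially the same approach as the paper: the paper's proof of this proposition is a bare citation, pointing to \cite{bruno} for $\varepsilon^{\Gamma}(h)=O(h)$ and to \cite{zha04} for $\varepsilon^{Z}(h)=O(h)$, and your argument is precisely a fleshed-out version of those citations. Your Cauchy--Schwarz reduction $\varepsilon^{\Gamma}(h)\le \lambda(\Rd)\,\varepsilon^{\mu}(h)$ and the identification of the invertibility hypothesis in \textbf{(C)} as the ingredient enabling the jump-component regularity in \cite{bruno} are sound and more informative than what the paper records, but not a different route.
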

\begin{proof}
        See \cite{bruno} for the first one and \cite{zha04} for the second one.
\end{proof}

\section{Proof of the Main Theorem}\label{sect:Proof}

%{\color{blue} Los pasos 1 y 2, en los cuales se trabaja con el proceso $\Gamma$ y sus derivados, no tienen problema %para llegar a una primera cota de $max\ \esp{Y_{t_i}-\widehat{\ca{U}}}$ en función de $\esp{\vg{t_i}- \ca{U}}$. En el %momento de empezar a trabajar con $L$, que se hace necesario para acotar el término que depende de la red, hay que %comenzar a trabajar con $U$, sus derivados y la red $\tilde{\ca{G}}$ que aproxima ese componente. Además, será útil la %siguiente cota:
%\[
%    \esp{\gb{\Gamma}_{t_i}-\widehat{\prom{\ca{G}}}_i (X_{t_i}^\pi)}^2 \le %\bb{E}\parent{\norm{\gb{U}_{t_i}(\cdot)-\widehat{\ca{G}}(X_{t_i},\cdot)}_{L^2(\lambda)}^2}
%\]
%También se usa una especie de Fubini condicional que se podrá poner como lemma.\\
%}

As stated previously, the proof of our main result, Theorem \ref{MT1}, is deeply inspired in the the case without jumps considered in \cite{DBS}. We follow the lines of that proof, with some important differences because of the nonlocal character of our problem.

\subsection{Step 1} 
Recall $\widehat{\ca{V}}_{t_i}$ introduced in \eqref{eq:v}. The purpose of this part is to obtain a suitable bound of the term  $\esp{Y_{t_i}-\widehat{\ca{V}}_{t_i}}^2$ in terms of more tractable terms. We have

\begin{lemma}
There exists $C>0$ fixed such that for any $0<h<1$ sufficiently small, one has
\begin{align}\label{step1}
    \esp{Y_{t_i}-\widehat{\ca{V}}_{t_i}}^2 \le &~{} Ch^2+C\bb{E}\parent{\integral|Y_s-Y_{t_i}|^2ds}+ C\bb{E}\parent{\integral|Z_s-\overline{Z}_{t_i}|^2ds} \nonumber \\
    &~{} +C\bb{E}\parent{\integral|\Gamma_s-\overline{\Gamma}_{t_i}|^2ds} +Ch \bb{E}\parent{\integral f(\Theta_r)^2dr}  \nonumber  \\
    &~{} + C(1+Ch)\bb{E} \left| Y_{t_{i+1}}-\ug{i+1} (X^\pi_{t_{i+1}}) \right|^2,
\end{align}
with $\Theta_r=(r,X_r,Y_r,Z_r,\Gamma_r)$.
\end{lemma}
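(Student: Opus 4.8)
The plan is to compare the true value $Y_{t_i}$ with the discrete process $\widehat{\ca V}_{t_i}$ by writing both through the same FBSDEJ dynamics over the interval $[t_i,t_{i+1}]$ and then applying conditional expectation so that the martingale increments disappear. Concretely, from \eqref{eq:bsdej} applied between $t_i$ and $t_{i+1}$, taking $\bb E_i$ and using Lemma \ref{lem:Ito} and Claim \ref{lem:martingales} to kill the $dW$ and $\overline\mu$ integrals, one gets
\[
  Y_{t_i}=\bb E_i\!\left(Y_{t_{i+1}}\right)+\bb E_i\!\left(\integral f(\Theta_s)\,ds\right).
\]
The defining relation \eqref{eq:v} reads $\widehat{\ca V}_{t_i}=\bb E_i\big(\ug{i+1}(X^\pi_{t_{i+1}})\big)+h\,f\big(t_i,X^\pi_{t_i},\widehat{\ca V}_{t_i},\gb Z_{t_i},\gb\Gamma_{t_i}\big)$. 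Subtracting, I would split the difference into three groups: (i) the terminal mismatch $\bb E_i\big(Y_{t_{i+1}}-\ug{i+1}(X^\pi_{t_{i+1}})\big)$; (ii) the driver discrepancy $\bb E_i\big(\integral[f(\Theta_s)-f(t_i,X^\pi_{t_i},\widehat{\ca V}_{t_i},\gb Z_{t_i},\gb\Gamma_{t_i})]\,ds\big)$; and (iii) nothing else. The term (ii) is where the work is.

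For (ii) I would further decompose $f(\Theta_s)-f(t_i,X^\pi_{t_i},\widehat{\ca V}_{t_i},\gb Z_{t_i},\gb\Gamma_{t_i})$ by inserting intermediate points: first compare $\Theta_s=(s,X_s,Y_s,Z_s,\Gamma_s)$ with $(t_i,X_{t_i},Y_{t_i},\overline Z_{t_i},\overline\Gamma_{t_i})$, then compare the latter with $(t_i,X^\pi_{t_i},\widehat{\ca V}_{t_i},\gb Z_{t_i},\gb\Gamma_{t_i})$, using the H\"older/Lipschitz property of $f$ from \eqref{CC} at each stage. The first comparison, after integrating over $[t_i,t_{i+1}]$ and using $|s-t_i|^{1/2}\le h^{1/2}$, produces the $h\cdot O(h^2)$ term plus $\bb E\big(\integral|Y_s-Y_{t_i}|^2ds\big)$, plus the $L^2$-regularity contributions $\bb E\big(\integral|Z_s-\overline Z_{t_i}|^2ds\big)$ and $\bb E\big(\integral|\Gamma_s-\overline\Gamma_{t_i}|^2ds\big)$ — here one uses that $\overline Z_{t_i},\overline\Gamma_{t_i}$ are the averaged conditional expectations from \eqref{barras} and that $\bb E_i$ of $\integral(Z_s-\overline Z_{t_i})ds$ vanishes, so only the squared deviation survives after Cauchy--Schwarz in $s$. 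The key structural point for handling $|X_s-X_{t_i}|$ and $|X^\pi_{t_i}-X_{t_i}|$ is the Euler error bound \eqref{error_X}, which contributes $O(h)\cdot h$ inside the sum, i.e. $O(h^2)$ per step after the $h$ from integration. Crucially, one also has to recall that $\widehat{\ca V}_{t_i}$, $\gb Z_{t_i}$, $\gb\Gamma_{t_i}$ are built from $\ug{i+1}$, not from $Y$, so the differences $\widehat{\ca V}_{t_i}-Y_{t_i}$, $\gb Z_{t_i}-\overline Z_{t_i}$, $\gb\Gamma_{t_i}-\overline\Gamma_{t_i}$ must be expressed through the terminal mismatch $\bb E\big|Y_{t_{i+1}}-\ug{i+1}(X^\pi_{t_{i+1}})\big|^2$ — this is exactly where the martingale representation computations giving $\gb Z_{t_i}=\frac1h\bb E_i(\integral\widehat Z_s ds)$ and $\gb\Gamma_{t_i}$ analogously, together with the conditional Itô isometry, let one bound $\bb E|\gb Z_{t_i}-\overline Z_{t_i}|^2 h$ and $\bb E|\gb\Gamma_{t_i}-\overline\Gamma_{t_i}|^2 h$ by a constant times $\bb E|Y_{t_{i+1}}-\ug{i+1}(X^\pi_{t_{i+1}})|^2$, which is the source of the $C(1+Ch)$ factor on the last line.

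The remaining assembly is routine: I would take absolute values, square, and apply Cauchy--Schwarz/Young's inequality $|a+b+\dots|^2\le (1+Ch)|a|^2 + C h^{-1}(\text{small terms})^2$ chosen so that the $h^{-1}$ against an $\integral(\cdot)ds$ factor of length $h$ produces the clean $\integral(\cdot)^2 ds$ integrals, and so that the self-referential term $h^2\bb E|\widehat{\ca V}_{t_i}-Y_{t_i}|^2$ coming from the Lipschitz-in-$y$ part of $f$ can be absorbed into the left-hand side for $h$ small. The term $h\,\bb E\big(\integral f(\Theta_r)^2\,dr\big)$ appears when one estimates $\big(\bb E_i\integral[f(\Theta_s)-\dots]ds\big)^2$ and isolates the piece $\big(\bb E_i\integral f(\Theta_s)ds\big)^2\le h\,\bb E_i\integral f(\Theta_s)^2 ds$ that is not directly comparable through Lipschitz continuity (e.g. the contribution where one simply bounds $f(\Theta_s)$ crudely after separating the time increment), and Lemma \ref{lemma:f-int-bound} guarantees this is finite.

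The main obstacle I anticipate is the careful bookkeeping of which arguments of $f$ are compared against which intermediate point, and ensuring that the averaged conditional expectations $\overline Z_{t_i}$, $\overline\Gamma_{t_i}$ (rather than the pointwise values $Z_{t_i}$, $\Gamma_{t_i}$, which need not even be well-defined) are the correct pivots so that the $L^2$-regularity quantities $\varepsilon^Z(h),\varepsilon^\Gamma(h)$ from \eqref{errores0} are what show up — together with controlling the nonlocal $\Gamma$-term on exactly the same footing as the $Z$-term, which is the genuinely new feature relative to \cite{DBS} and relies on \eqref{gamma_ti} and Lemma \ref{condFubini}. Everything else is Lipschitz estimates, Young's inequality, and the a priori bounds \eqref{eq: sol-bound}.
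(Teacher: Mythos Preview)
Your overall architecture matches the paper's: write $Y_{t_i}-\widehat{\ca V}_{t_i}=\bb E_i(H_{i+1})+\bb E_i\big(\integral[f(\Theta_s)-f(\widehat\Theta_{t_i})]ds\big)$, apply Young with parameter $\gamma h$, use Lipschitzness of $f$, pivot $Z_s$ and $\Gamma_s$ through the averaged expectations $\overline Z_{t_i},\overline\Gamma_{t_i}$, and absorb the self-referential $h\,\bb E|Y_{t_i}-\widehat{\ca V}_{t_i}|^2$ on the left. That is exactly what the paper does, and your use of the triangle inequality at the intermediate point $(t_i,X_{t_i},Y_{t_i},\overline Z_{t_i},\overline\Gamma_{t_i})$ is equivalent to the paper's Pythagorean identities \eqref{eq:ortogonal1}--\eqref{eq:ortogonal2}.

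There is, however, a genuine gap in your account of the term $Ch\,\bb E\big(\integral f(\Theta_r)^2dr\big)$. It does \emph{not} arise from ``isolating the piece $\big(\bb E_i\integral f(\Theta_s)ds\big)^2$'' out of the driver discrepancy --- the Lipschitz bound handles the full difference $f(\Theta_s)-f(\widehat\Theta_{t_i})$ and never produces $f(\Theta_s)$ alone. The actual source is precisely the place where you wrote an incomplete estimate: your claim that $h\,\bb E|\overline Z_{t_i}-\gb Z_{t_i}|^2$ and $h\,\bb E|\overline\Gamma_{t_i}-\gb\Gamma_{t_i}|^2$ are bounded by a constant times $\bb E|H_{i+1}|^2$ is missing a term. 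Multiplying \eqref{eq:resta} by $\Delta W_i$ and taking $\bb E_i$ gives
\[
h\big(\overline Z_{t_i}-\gb Z_{t_i}\big)=\bb E_i\big[\Delta W_i\,H_{i+1}\big]+\bb E_i\Big[\Delta W_i\integral f(\Theta_r)\,dr\Big],
\]
and it is the second summand that, after Cauchy--Schwarz against $\bb E_i|\Delta W_i|^2=dh$, yields the extra $dh^2\,\bb E\big(\integral f(\Theta_r)^2dr\big)$; the same happens with $\Delta M_i$ and contributes the factor $\lambda(\Rd)$. So the correct bounds are \eqref{eq: 4.11z}--\eqref{eq: 4.11g}, each carrying both the terminal mismatch \emph{and} the $f(\Theta_r)^2$ integral. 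Once you fix this, the rest of your outline goes through.

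A minor point you glossed over: in the paper the parameter $\gamma$ is chosen proportional to $d+\lambda(\Rd)$ so that the leading $(1+\gamma h)\bb E|\bb E_i(H_{i+1})|^2$ is dominated by the \emph{negative} contribution $-\,\bb E|\bb E_i(H_{i+1})|^2$ coming from \eqref{eq: 4.11z}--\eqref{eq: 4.11g}, leaving only $\bb E(H_{i+1}^2)$ on the right. For the bare statement of this lemma you could instead invoke $\bb E|\bb E_i(H_{i+1})|^2\le \bb E(H_{i+1}^2)$, but the sharper cancellation is what the paper actually does and it matters for the bookkeeping in the later steps.
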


The rest of this subsection is devoted to the proof of this result. Subtracting the equation \eqref{eq:bsdej} between $t_i$ and $t_{i+1}$, we obtain
\begin{align}
  \Delta Y_i =  Y_{t_{i+1}}-Y_{t_i}=-\int_{t_i}^{t_{i+1}} f(\Theta_s)ds +\int_{t_i}^{t_{i+1}}Z_s\cdot dW_s+\int_{t_i}^{t_{i+1}}\int_{\Rd} U_s(y)\overline{\mu}(ds,dy).
    \label{eq:resta}
\end{align}
Using the definition of $\widehat{\ca{V}}_{t_i}$ in \eqref{eq:v},
\[
\begin{aligned}
    Y_{t_i}-\widehat{\ca{V}}_{t_i}=&~{}  Y_{t_{i+1}} -\Delta Y_i -\widehat{\ca{V}}_{t_i}\\
    =&~{} Y_{t_{i+1}}+\integral [f(\Theta_s)-f(\widehat{\Theta}_{t_i})]ds-\integral Z_s\cdot dW_s - \integral \int_{\Rd} U_s(y)\overline{\mu}(ds,dy)\\
    &~{} -\bb{E}_i(\ug{i+1}(X^\pi_{t_{i+1}})).
\end{aligned}
\]
Here $\widehat{\Theta}_{t_i}=(t_i,X^\pi_{t_i},\widehat{\ca{V}}_{t_i}, \gb{Z}_{t_i},\gb{\Gamma}_{t_i})$. Then, by applying the conditional expectation for time $t_i$ given by $\bb{E}_i$ and using that, in this case, the stochastic integrals are martingales
\begin{align*}
    Y_{t_i}-\widehat{\ca{V}}_{t_i}=\bb{E}_i(Y_{t_{i+1}}-\ug{i+1}(X^\pi_{t_{i+1}})) + \bb{E}_i\parent{\integral [f(\Theta_s)-f(\widehat{\Theta}_{t_i})]ds} = a+b.
\end{align*}
Using the classical inequality $(a+b)^2\le (1+\gamma h)a^2+(1+\frac{1}{\gamma h})b^2$ for $\gamma>0$ to be chosen, we get
\begin{equation}\label{parada1}
\begin{aligned}
    \bb{E}\barras{Y_{t_i}-\widehat{\ca{V}}_{t_i}}^2\le &~{} (1+\gamma h)
    \bb{E} \left[ \bb{E}_i\parent{Y_{t_{i+1}}-\ug{i+1}(X^\pi_{t_{i+1}})} \right] ^2  \\
    &~{} + \parent{1+\frac{1}{\gamma h}}\bb{E} \left[\bb{E}_i\parent{\integral [f(\Theta_s)-f(\widehat{\Theta}_{t_i})]ds}\right]^2.
\end{aligned}
\end{equation}
With no lose of generality, because we are looking for bounds, we can replace $[f(\Theta_s)-f(\widehat{\Theta}_{t_i})]$ by $|f(\Theta_s)-f(\widehat{\Theta}_{t_i})|$. Also, we can drop the $\bb{E}_i$ due to the law of total expectation. The Lipschitz condition on $f$ in \eqref{CC} allows us to give a bound in terms of the difference between $\Theta_s$ and $\widehat{\Theta}_{t_i}$. Indeed, for a fixed constant $K>0$,
\begin{align*}
|f(\Theta_s)-f(\widehat{\Theta}_{t_i})|\le K\parent{|s-t_i|^{1/2}+|X_s-X^{\pi}_{t_i}|+|Y_s-\vg{t_i}|+|Z_s-\gb{Z}_{t_i}|+|\Gamma_s-\gb{\Gamma}_{t_i}|}.
\end{align*}
%{\color{red} Ojo que tu hipotesis en (C) es 
%\[
%|f(t,x,y,z,w)-f(t',x',y',z',w')|^2\le K\big(|t-t'|+|x-x'|+|y-y'|+|z-z'|+\norm{w-w'}^2_{L^2 (\lambda)}\big).
%\]
%Ojo a la coherencia en notacion y MUY importantemente en las potencias. No es buena idea cambiar de notacion tan pronto.
%}
Therefore, we have the bound
\begin{align*}
    \bb{E}\parent{\integral |f(\Theta_s)-f(\widehat{\Theta}_{t_i})|ds}^2 \le &~{} Ch\left[ h^2+\bb{E}\parent{\integral |X_s-X^{\pi}_{t_i}|^2ds} + \bb{E}\parent{\integral |Y_s-\vg{t_i}|^2ds} \right.\\
    &\qquad +\left. \bb{E}\parent{\integral |Z_s-\gb{Z}_{t_i}|^2ds} + \bb{E}\parent{\integral |\Gamma_s-\gb{\Gamma}_{t_i}|^2ds}\right],
\end{align*}
where the Lipschitz constant $K$ was absorbed by $C$. Using now the triangle inequality $|Y_s-\vg{t_i}|^2 \leq 2|Y_s-Y_{t_i}|^2 +2|Y_{t_i}-\vg{t_i}|^2$,  and the approximation error of the $X$ scheme \eqref{error_X}, we find
\begin{align}
   &  \bb{E}\parent{\integral |f(\Theta_s)-f(\widehat{\Theta}_{t_i})|ds}^2 \\
    &\quad \le C h\left[  h^2+ 
    2\bb{E}\parent{\integral |Y_s-Y_{t_i}|^ 2 ds} +2h \esp{Y_{t_i}- \vg{t_i}}^2 \right. \nonumber\\
    &\left. \qquad\qquad
    + \bb{E}\parent{\integral |Z_s-\gb{Z}_{t_i}|^2 ds} + \bb{E}\parent{\integral |\Gamma_s-\gb{\Gamma}_{t_i}|^2ds} \right], \label{eq:4.9}
\end{align}
and therefore, replacing in \eqref{parada1},
\begin{align}
   & \esp{Y_{t_i}-\widehat{\ca{V}}_{t_i}}^2 \nonumber\\
    &\le \left(1+\gamma h \right)\esp{\bb{E}_i\left[Y_{t_{i+1}}-\ug{i+1} (X^\pi_{t_{i+1}}) \right]  }^2 \nonumber\\
    & \quad + \parent{1+\gamma h} \frac{C}{\gamma} \left[ h^2 + 
    \bb{E}\parent{\integral |Y_s-Y_{t_i}|^ 2 ds} +h \esp{Y_{t_i}- \vg{t_i}}^2 \right. \nonumber\\
    &\left. \qquad \qquad  \qquad  \quad \quad
    + \bb{E}\parent{\integral |Z_s-\gb{Z}_{t_i}|^2 ds} + \bb{E}\parent{\integral |\Gamma_s-\gb{\Gamma}_{t_i}|^2ds} \right]. 
\end{align}
Recall $\overline Z_{t_i}$ and $\overline \Gamma_{t_i}$ introduced in \eqref{barras}. Now, we are going to prove the following
\begin{align}
    \bb{E}\parent{\integral |Z_s-\gb{Z}_{t_i}|^2 ds}&=\bb{E}\parent{\integral|Z_s-\overline{Z}_{t_i}|^2 ds}+h \esp{\overline{Z}_{t_i}-\gb{Z}_{t_i}}^2.
    \label{eq:ortogonal1}\\
    \bb{E}\parent{\integral |\Gamma_s-\gb{\Gamma}_{t_i}|^2 ds}&=\bb{E}\parent{\integral|\Gamma_s-\overline{\Gamma}_{t_i}|^2ds}+h \esp{\overline{\Gamma}_{t_i}-\gb{\Gamma}_{t_i}}^2.
    \label{eq:ortogonal2}
\end{align}
Let us prove the latter, the former is analogous. Recall that the $\Gamma$ components represents the nonlocal part and therefore is one dimensional.
\begin{align*}
    |\Gamma_t-\gb{\Gamma}_{t_i}|^2 = | (\Gamma_t-\overline{\Gamma}_{t_i}) + (\overline{\Gamma}_{t_i}-\gb{\Gamma}_{t_i}) |^2 = (\Gamma_t-\overline{\Gamma}_{t_i})^2 + (\overline{\Gamma}_{t_i}-\gb{\Gamma}_{t_i})^2 + 2(\Gamma_t-\overline{\Gamma}_{t_i})(\overline{\Gamma}_{t_i}-\gb{\Gamma}_{t_i}).
\end{align*}
It is sufficient to establish that the double product is $0$ when integrating and taking expectation. Recall that $\overline{\Gamma}_{t_i}$ from \eqref{barras}
%\begin{align*}
%    \overline{\Gamma}_{t_i}=\Delta t^{-1}\bb{E}_i\parent{\integral\Gamma_td t} ,
%\end{align*}
is a $\ca{F}_{t_i}$ measurable random variable. Then,
\begin{align*}
    \integral \parent{\Gamma_t-\overline{\Gamma}_{t_i}}(\overline{\Gamma}_{t_i}-\gb{\Gamma}_{t_i})dt& = \parent{\integral (\Gamma_t-\overline{\Gamma}_{t_i})dt}(\overline{\Gamma}_{t_i}-\gb{\Gamma}_{t_i})\\
    &= \left[\integral \Gamma_t dt - \bb{E}_i\parent{\integral\Gamma_t dt}\right] (\overline{\Gamma}_{t_i}-\gb{\Gamma}_{t_i}).
\end{align*}
Due to the $\ca{F}_{t_i}$-measurability of the right side of the last multiplication and the $L^2(\p)$ orthogonality, taking expectation annihilates the last term. Therefore, equations (\ref{eq:ortogonal1}) and (\ref{eq:ortogonal2}) are proven. By multiplying (\ref{eq:resta}) by $\Delta W_i$ and taking $\bb{E}_i$,
\begin{align*}
    \bb{E}_i\parent{\Delta W_i Y_{t_{i+1}}} + \bb{E}_i\parent{\Delta W_i\integral f(\Theta_r)dr} =&~{}  \bb{E}_i\parent{\integral dW_r\integral Z_r dW_r}\\
    &+\bb{E}_i\parent{\integral\int_{\Rd}U_r(y)\overline{\mu}(dy,dr)\integral dW_r}\\
    =&~{} \bb{E}_i\parent{\integral Z_r dr} = h \overline{Z}_{t_i},
\end{align*}
where we have used Lemma \ref{eq:property1}. Then, subtracting $h \gb{Z}_{t_i}=\bb{E}_i(\ug{i+1}(X^{\pi}_{t_{i+1}})\Delta W_i)$,
\begin{align*}
    h (\overline{Z}_{t_i}- \gb{Z}_{t_i})= \bb{E}_i\left[\Delta W_i (Y_{t_{i+1}}-\ug{i+1}(X^{\pi}_{t_{i+1}}))\right] &+ \bb{E}_i\parent{\Delta W_i\integral h(\Theta_r)dr}.
\end{align*}
By multiplying (\ref{eq:resta}) by $\Delta M_i$ and taking $\bb{E}_i$,
\begin{align*}
    & \bb{E}_i\parent{\Delta M_i Y_{t_{i+1}}} + \bb{E}_i\parent{\Delta M_i\integral f(\Theta_r)dr} \\
    &= \bb{E}_i\parent{\integral\int_{\Rd}\overline{\mu}(ds,dy)\integral Z_r\cdot dW_r} + \bb{E}_i\parent{\integral \int_{\Rd} \overline{\mu}(dr,dy)\integral \int_{\Rd}U_r(y) \overline{\mu}(dr,dy)} \\
    &= \bb{E}_i\parent{\integral\int_{\Rd}U_r(y)\lambda(dy)ds} = h \overline{\Gamma}_{t_i}.
\end{align*}
Then, subtracting $h\gb{\Gamma}_{t_i}=\bb{E}_i\parent{\ug{i+1}(X^{\pi}_{t_{i+1}})\Delta M_i}$,
\begin{align*}
    h(\overline{\Gamma}_{t_i}-\gb{\Gamma}_{t_i}) = \bb{E}_i\left[\Delta M_i\parent{Y_{t_{i+1}}-\ug{i+1}(X^{\pi}_{t_{i+1}})}  \right] + \bb{E}_i\parent{\Delta M_i\integral f(\Theta_r)dr}.
\end{align*}
Summarizing, one has
\begin{align*}
    h(\overline{Z}_{t_i}-\gb{Z}_{t_i}) = &~{} \bb{E}_i \left[ \Delta W_i \parent{Y_{t_{i+1}}-\ug{i+1}(X^{\pi}_{t_{i+1}})-\bb{E}_i\left[Y_{t_{i+1}}-\ug{i+1}(X^{\pi}_{t_{i+1}})\right]}\right] \\
    &~{} + \bb{E}_i\left[ \Delta W_i\integral f(\Theta_r)dr \right];\\
    h(\overline{\Gamma}_{t_i}-\gb{\Gamma}_{t_i}) =&~{}  \bb{E}_i \left[ \Delta M_i \parent{Y_{t_{i+1}}-\ug{i+1}(X^{\pi}_{t_{i+1}})-\bb{E}_i\left[Y_{t_{i+1}}-\ug{i+1}(X^{\pi}_{t_{i+1}})\right]}\right] \\
    &~{} + \bb{E}_i\left[ \Delta M_i\integral f(\Theta_r)dr \right].
\end{align*}
For the sake of brevity, define now 
\begin{equation}\label{def_H}
H_{i}:=Y_{t_{i}}-\ug{i} (X^\pi_{t_{i}});
\end{equation}
note that it depends on $i$. By the properties related with Itô isometry, from the previous identities we have
\begin{align}
    \bb{E}\parent{h^2 \left| \overline{Z}_{t_i}-\gb{Z}_{t_i} \right|^2} \le &~{} 2dh \parent{\bb{E}(H_{i+1}^2)-\esp{\bb{E}_i(H_{i+1})}^2} + 2d h^2\bb{E}\left[ \integral f(\Theta_r)^2dr\right];
    \label{eq: 4.11z}
    \\
    \bb{E}\parent{h^2 \left| \overline{\Gamma}_{t_i}-\gb{\Gamma}_{t_i} \right|^2} \le &~{} 2\lambda(\Rd) h \parent{\bb{E}(H_{i+1}^2)-\esp{\bb{E}_i(H_{i+1})}^2}  + 2\lambda(\Rd) h^2\bb{E}\left[ \integral f(\Theta_r)^2dr\right].
    \label{eq: 4.11g}
\end{align}
\begin{remark}
Note that in the previous bound is important the finiteness of the Levy measure $\lambda$. The case of more general integro-differential operators, such as the fractional Laplacian mentioned in the introduction, it is an interesting open problem. %where this finiteness is not satisfied, we will need to change this part of the proof.
\end{remark}

Let us work with equation (\ref{eq:4.9}). Using (\ref{eq:ortogonal1}) and (\ref{eq:ortogonal2}), 
\begin{align*}
    \esp{Y_{t_i}-\widehat{\ca{V}}_{t_i}}^2
    \le &~{} \left(1+\gamma h\right)\esp{\bb{E}_i(H_{i+1})  }^2 \\
    &~{} + \parent{1+\gamma h} \frac{C}{\gamma} \left[  h^2+\bb{E}\parent{\integral |Y_s-Y_{t_i}|^2 ds} \right. 
    \left.+h \bb{E}|Y_{t_i}- \vg{t_i}|^2\right.\nonumber\\
    &~{} \qquad \qquad \qquad \qquad  + \left.\bb{E}\parent{\integral|Z_s-\overline{Z}_{t_i}|^2ds}+h \esp{\overline{Z}_{t_i}-\gb{Z}_{t_i}}^2  \right. \nonumber\\
    &~{} \qquad \qquad \qquad \qquad \left.
    +  \bb{E}\parent{\integral|\Gamma_s-\overline{\Gamma}_{t_i}|^2ds}+h \esp{\overline{\Gamma}_{t_i}-\gb{\Gamma}_{t_i}}^2 \right].
\end{align*}
Now use (\ref{eq: 4.11z}) and (\ref{eq: 4.11g}) to find that
\begin{align*}
  &~{}  \esp{Y_{t_i}-\widehat{\ca{V}}_{t_i}}^2\\
   &~{} \le  (1+\gamma h)\esp{\bb{E}_i(H_{i+1})}^2 \\
    &~{} \qquad  + (1+\gamma h)\frac{C}{\gamma}\left[ h^2 + \bb{E}\parent{\integral|Y_s-Y_{t_i}|^2ds}\right.+ h\bb{E}\left|Y_{t_i}-\widehat{\ca{V}}_{t_i}\right|^2\\
    &~{} \qquad  \qquad \qquad \qquad  \qquad+\bb{E}\parent{\integral|Z_s-\overline{Z}_{t_i}|^2ds} +\bb{E}\parent{\integral|\Gamma_s-\overline{\Gamma}_{t_i}|^2ds}\\
    &~{} \qquad  \qquad \qquad \qquad \qquad +2d\left[\bb{E}\parent{H_{i+1}^2}-\esp{\bb{E}_i(H_{i+1})}^2\right] + 2dh \bb{E}\parent{\integral f(\Theta_r)^2 dr} \\
   &~{} \qquad  \qquad \qquad \qquad \qquad + 2\lambda(\Rd)\left[\bb{E}\parent{H_{i+1}^2}-\esp{\bb{E}_i(H_{i+1})}^2\right] \left.+2\lambda(\Rd)h \bb{E}\parent{\integral f(\Theta_r)^2 dr} \right].
\end{align*}
%Let $\gamma=16dK^2$ and define $D:=(1+\gamma h)\frac{1}{4d}$, then the above term is bounded by
Let $\gamma=C(\lambda(\Rd) + d)$ and define $D:=(1+\gamma h)\frac{C}{\gamma}$, then the above term is bounded by
\begin{align*}
    & (1+\gamma h)\esp{\bb{E}_i(H_{i+1})}^2+ Dh^2 + D\bb{E}\parent{\integral|Y_s-Y_{t_i}|^2} + Dh \bb{E}|Y_{t_i}-\widehat{\ca{V}}_{t_i}|^2 +   D \bb{E}\parent{\integral|Z-\overline{Z}_{t_i}|^2ds}\\
    &\qquad+ D\bb{E}\parent{\integral|\Gamma_s-\overline{\Gamma}_{t_i}|^2ds} + (1+\gamma h)\frac{C}{\gamma} 2d\bb{E}\parent{H_{i+1}^2} + 2dDh\bb{E}\parent{\integral f(\Theta_r)^2dr}\\
    &\qquad+ (1+\gamma h)\frac{C}{\gamma} 2\lambda(\Rd)\bb{E}\parent{H_{i+1}^2} + 2\lambda(\Rd) Dh\bb{E}\parent{\integral f(\Theta_r)^2dr} - 2(1+\gamma h) \esp{\bb{E}_i(H_{i+1})}^2
\end{align*}
%\begin{align*}
%    & \underbrace{(1+\gamma h)\esp{\bb{E}_i(H)}^2}_{A_1}+(1+\gamma h)\frac{1}{4d}Ch^2 + D2\bb{E}\parent{\integral|Y_s-Y_{t_i}|^2} + D2ht %|Y_{t_i}-\widehat{\ca{V}}_{t_i}|^2\\
%    &+D\bb{E}\parent{\integral|\Gamma_s-\overline{\Gamma}_{t_i}|^2ds} + D \bb{E}\parent{\integral|Z-\overline{Z}_{t_i}|^2ds} + %2Dd\bb{E}\parent{H^2}-\underbrace{2Dd\esp{\bb{E}_i(H)}^2}_{A_2}\\
%    & + \underbrace{2Ddh\bb{E}\parent{\integral f(\Theta_r)^2dr}}_{b} + 2CD\bb{E}\parent{H^2} -\underbrace{2CD\esp{\bb{E}_i(H)}^2}_{A_3} %+ \underbrace{2CDh \bb{E}\parent{\integral f(\Theta_r)^2dr}}_{b}.
%\end{align*}
Note that the first and last term in the last expression are similar, therefore can be subtracted which yields a negative number that can be bounded from above by $0$. Also, we have the similar terms on $\bb{E}\parent{H_{i+1}^2}$ and the integral of $f$ that we put together and bound respectively. Due to the definition of $D$, from now on the constant $C$ has a linear dependence on the dimension $d$ such that $D\le C$.
%By using $-\frac{C}{d}<-1$ (This means that $C$ has linear dependence on $d$),
%
%\begin{align*}
%    A_1 + A_2 + A_3 &=(1+\gamma h t)\esp{\bb{E}_i(H)}^2-\frac{(1+\gamma h)}{2}\esp{\bb{E}_i(H)}^2-\frac{(1+\gamma %h)C}{2d}\esp{\bb{E}_i(H)}^2\\
%    &\le (1+\gamma h)\esp{\bb{E}_i(H)}^2-\frac{(1+\gamma h)}{2}\esp{\bb{E}_i(H)}^2-\frac{(1+\gamma h)}{2}\esp{\bb{E}_i(H)}^2=0.
%\end{align*}
By replacing the last calculation and putting $\esp{Y_{t_i}-\widehat{\ca{V}}_{t_i}}^2$ on the left side
\begin{align*}
   &~{}  (1-Ch)\esp{Y_{t_i}-\widehat{\ca{V}}_{t_i}}^2\\
   &~{}\qquad  \le Ch^2+C\bb{E}\parent{\integral|Y_s-Y_{t_i}|^2ds} + C\bb{E}\parent{\integral|Z_s-\overline{Z}_{t_i}|^2ds}\\
    &~{} \qquad  +C\bb{E}\parent{\integral|\Gamma_s-\overline{\Gamma}_{t_i}|^2ds} + C(1+Ch)\bb{E}\parent{H_{i+1}^2} + Ch \bb{E}\parent{\integral f(\Theta_r)^2dr}.
\end{align*}
Now we have to take $h$ small such that, for example, $Ch\le\frac{1}{2}$ and then
\begin{align*}
    \esp{Y_{t_i}-\widehat{\ca{V}}_{t_i}}^2 \le &~{} Ch^2+C\bb{E}\parent{\integral|Y_s-Y_{t_i}|^2ds}+ C\bb{E}\parent{\integral|Z_s-\overline{Z}_{t_i}|^2ds}\\
    &~{} +C\bb{E}\parent{\integral|\Gamma_s-\overline{\Gamma}_{t_i}|^2ds}+Ch \bb{E}\parent{\integral f(\Theta_r)^2dr}+ C(1+Ch)\bb{E}\parent{H_{i+1}^2} .
\end{align*}
Finally, by recalling that $H_{i+1} =Y_{t_{i+1}}-\ug{i+1} (X^\pi_{t_{i+1}})$, we have established \eqref{step1}.

\subsection{Step 2}

The last term in \eqref{step1},
\[
C(1+Ch)\bb{E} \left| Y_{t_{i+1}}-\ug{i+1} (X^\pi_{t_{i+1}}) \right|^2,
\]
was left without a control in previous step. Here in what follows we provide a control on this term. Recall the error terms $\varepsilon^{Z}(h)$ and $\varepsilon^{\Gamma}(h)$ introduced in \eqref{errores0}. The purpose of this section is to show the following estimate:
\begin{lemma}\label{Lem2p4}
There exists a constant $C>0$ (linearly depending on the dimension $d$) such that,
 \begin{align}
    \max_{i\in\set{0,...,N-1}}\esp{Y_{t_i}-\ug{i}(X_{t_i}^\pi)}^2\le &~{} C\Bigg[N \sum_{i=0}^{N-1} \esp{\ug{i}(X_{t_i}^\pi)-\widehat{\ca{V}}_{t_i}}^2  + h + \varepsilon^{Z}(h)+\varepsilon^{\Gamma}(h) +  \esp{ g(X_T)-g(X_T^\pi) }^2  \Bigg]. \label{step2}
   \end{align}
\end{lemma}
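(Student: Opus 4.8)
The plan is to bound the propagated error $\esp{Y_{t_i}-\ug{i}(\xscheme{i})}^2=\bb{E}(H_i^2)$, with $H_i$ as in \eqref{def_H}, by a backward one-step recursion in $i$ and then to close it with a discrete Gronwall argument. The starting point is the triangle decomposition $H_i=(Y_{t_i}-\vg{t_i})+(\vg{t_i}-\ug{i}(\xscheme{i}))$ together with the elementary inequality $(a+b)^2\le(1+h)a^2+(1+h^{-1})b^2$. Since $h=T/N$, the weight $1+h^{-1}$ is of order $N$, so this gives
\[
\bb{E}(H_i^2)\le(1+h)\,\esp{Y_{t_i}-\vg{t_i}}^2+CN\,\esp{\vg{t_i}-\ug{i}(\xscheme{i})}^2 .
\]
Into the first summand I insert the Step 1 bound \eqref{step1}. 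Abbreviating
\[
\rho_i:=h^2+\bb{E}\Big(\integral|Y_s-Y_{t_i}|^2ds\Big)+\bb{E}\Big(\integral|Z_s-\overline{Z}_{t_i}|^2ds\Big)+\bb{E}\Big(\integral|\Gamma_s-\overline{\Gamma}_{t_i}|^2ds\Big)+h\,\bb{E}\Big(\integral f(\Theta_r)^2dr\Big)
\]
for the collection of lower-order terms in \eqref{step1}, and absorbing the factor $1+h$ into the constants, I arrive at a recursion of the form
\[
\bb{E}(H_i^2)\le(1+Ch)\,\bb{E}(H_{i+1}^2)+C\rho_i+CN\,\esp{\vg{t_i}-\ug{i}(\xscheme{i})}^2 .
\]

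Next I iterate this recursion from the terminal index $N$ downward. Since $\ug{N}=g$ and $Y_{t_N}=g(X_T)$, one has $\bb{E}(H_N^2)=\esp{g(X_T)-g(X_T^\pi)}^2$, which is exactly the boundary contribution appearing in \eqref{step2}. Unrolling, using $(1+Ch)^{N}\le e^{CT}$, and using that each term is non-negative, I obtain
\[
\max_{i\in\{0,\dots,N-1\}}\bb{E}(H_i^2)\le C\Big(\esp{g(X_T)-g(X_T^\pi)}^2+\sum_{i=0}^{N-1}\rho_i+N\sum_{i=0}^{N-1}\esp{\vg{t_i}-\ug{i}(\xscheme{i})}^2\Big),
\]
with $C$ independent of the partition. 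It then remains to identify $\sum_i\rho_i$: the contributions $\sum_i h^2=Th$ and $h\sum_i\bb{E}\big(\integral f(\Theta_r)^2dr\big)=h\,\bb{E}\big(\int_0^T f(\Theta_r)^2dr\big)$ are both $O(h)$ by Lemma \ref{lemma:f-int-bound}; $\sum_i\bb{E}\big(\integral|Y_s-Y_{t_i}|^2ds\big)\le Ch$ by Proposition \ref{lemma:y-bound}; and $\sum_i\bb{E}\big(\integral|Z_s-\overline{Z}_{t_i}|^2ds\big)=\varepsilon^Z(h)$, $\sum_i\bb{E}\big(\integral|\Gamma_s-\overline{\Gamma}_{t_i}|^2ds\big)=\varepsilon^\Gamma(h)$ by the definitions in \eqref{errores0}. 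Plugging these estimates into the displayed bound yields \eqref{step2}.

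I expect the delicate point to be this first step. For the discrete Gronwall to produce a constant independent of $N$, the coefficient multiplying $\bb{E}(H_{i+1}^2)$ in the recursion must be $1+O(h)$ and not a constant strictly larger than $1$; a multiplier of the form $C(1+Ch)$ with $C>1$ would accumulate a factor $C^N$ and destroy the uniformity. This asks for a slightly sharper version of Step 1 than \eqref{step1} as displayed: following the no-jump treatment of \cite{DBS}, one should keep the ``good'' quantities $h\,\esp{\overline{Z}_{t_i}-\gb{Z}_{t_i}}^2$ and $h\,\esp{\overline{\Gamma}_{t_i}-\gb{\Gamma}_{t_i}}^2$ on the left-hand side of the Step 1 estimate, rather than bounding them through \eqref{eq: 4.11z}--\eqref{eq: 4.11g}, and absorb the right-hand side $Z$- and $\Gamma$-increments into them, so that only an $O(h)$ perturbation of the leading coefficient survives and the constants $d$ and $\lambda(\Rd)$ enter only the additive lower-order terms. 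Granting such a refinement, the rest of the argument --- the summation of $\rho_i$ via Proposition \ref{lemma:y-bound}, Lemma \ref{lemma:f-int-bound} and \eqref{errores0}, and the passage to the maximum --- is routine.
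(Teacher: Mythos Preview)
Your approach --- split $H_i=(Y_{t_i}-\vg{t_i})+(\vg{t_i}-\ug{i}(\xscheme{i}))$ with a Young inequality weighted by $h$, insert Step~1, and close by a discrete Gronwall --- is exactly what the paper does. The paper phrases the splitting as the lower bound $(a+b)^2\ge(1-h)a^2+(1-1/h)b^2$ and rearranges, which is the same inequality; the iteration and the identification $\sum_i\rho_i=O(h)+\varepsilon^Z(h)+\varepsilon^\Gamma(h)$ via Proposition~\ref{lemma:y-bound}, Lemma~\ref{lemma:f-int-bound} and \eqref{errores0} are identical.

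Your ``delicate point'' is real and applies equally to the paper's own iteration: both arguments need the coefficient of $\bb{E}(H_{i+1}^2)$ in \eqref{step1} to be $1+O(h)$, not a generic $C(1+Ch)$ with $C>1$. However, the fix is not the refinement you sketch (keeping $h\,\esp{\overline Z_{t_i}-\gb Z_{t_i}}^2$ on the left); it is already present in the Step~1 derivation, though obscured by the sloppy constant in the displayed \eqref{step1}. The choice $\gamma=C(d+\lambda(\Rd))$ there is made precisely so that the leading term $(1+\gamma h)\esp{\bb{E}_i(H_{i+1})}^2$ is exactly cancelled by the negative $-\esp{\bb{E}_i(H_{i+1})}^2$ contributions coming from \eqref{eq: 4.11z}--\eqref{eq: 4.11g}, leaving $\bb{E}(H_{i+1}^2)$ with coefficient $(1+\gamma h)$; after dividing by $(1-Ch)$ this is $(1+\gamma h)/(1-Ch)\le 1+C'h$. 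So the right reading of \eqref{step1} has leading constant $1$ in front of $(1+Ch)\bb{E}(H_{i+1}^2)$, and with that reading your Gronwall step goes through with no further modification.
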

The rest of this section is devoted to the proof of this result. 

\subsubsection{Proof of Lemma \ref{Lem2p4}} We have that $(a+b)^2\ge(1-h)a^2+(1-\frac{1}{h})b^2$ and
\begin{align}
    \label{eq:4.13}
    \esp{Y_{t_i}-\widehat{\ca{V}}_{t_i}}^2&=\esp{\parent{Y_{t_i}-\ug{i}(X^\pi_{t_i})} + \parent{\ug{i}(X^\pi_{t_i})-\widehat{\ca{V}}_{t_i}}}^2\\
    &\ge (1-h)\esp{Y_{t_i}-\ug{i}(X^\pi_{t_i})}^2 + \parent{1-\frac{1}{h}}\esp{\ug{i}(X^\pi_{t_i})-\widehat{\ca{V}}_{t_i}}^2.
    \nonumber
\end{align}
Therefore, we have an upper \eqref{step1} and lower bound for $\esp{Y_{t_i}-\widehat{\ca{V}}_{t_i}}^2$. By connecting these bounds, 
\begin{align*}
    &(1-h)\esp{Y_{t_i}-\ug{i}(X^\pi_{t_i})}^2 + \parent{1-\frac{1}{h}}\esp{\ug{i}(X^\pi_{t_i})-\widehat{\ca{V}}_{t_i}}^2 \\
    & \quad \le Ch^2+C\bb{E}\parent{\integral|Y_s-Y_{t_i}|^2ds} + C\bb{E}\parent{\integral|Z_s-\overline{Z}_{t_i}|^2ds}\\
    &\quad \quad +C\bb{E}\parent{\integral|\Gamma_s-\overline{\Gamma}_{t_i}|^2ds}+Ch \bb{E}\parent{\integral f(\Theta_r)^2dr}+ C(1+Ch)\bb{E}\parent{H_{i+1}^2}.
\end{align*}
Using that for sufficiently small $h$ we have $(1-h)^{-1}\le 2$, we get,
%Let $\eta \in (0,1)$ be fixed and such that $h\le \eta$, then we have that $(1-h^{\delta})^{-1}\le (1-\eta^{\delta})^{-1}$. With this new variable, our global constant $C$ will depend on $\eta$ and $\delta$, in the sense that for all $\varepsilon>0$ and all $\eta \in (0,1)$ such that $h\le \eta$, there exists $ C>0$ such that $(1-\eta^{\delta})^{-1}\lesssim C$. Also, $\left| 1-\frac{1}{h^\delta} \right| \leq C N^\delta$. Consequently, considering that $H_{i+1}$ is defined by \eqref{def_H}, the next bound holds
\begin{align*}
&~{} \esp{Y_{t_i}-\ug{i} (\xscheme{i})}^2 \\
 &~{} \leq CN\esp{\ug{i}(\xscheme{i})-\widehat{\ca{V}}_{t_i}}^2 + Ch^2 \\
&~{} \quad + C\left[ \bb{E}\parent{\integral|Y_s-Y_{t_i}|^2ds} + \bb{E}\parent{\integral|Z_s-\overline{Z}_{t_i}|^2ds}+\bb{E}\parent{\integral|\Gamma_s-\overline{\Gamma}_{t_i}|^2ds}\right]  \\
    &~{} \quad +   Ch\bb{E}\parent{\integral|f(\Theta_s)|^2ds}+C\esp{Y_{t_{i+1}}-\ug{i+1}(X^{\pi}_{t_{i+1}}) }^2.
\end{align*}
Notice that the expression on time $t_i$ that we want to estimate, appears on the right side on time $t_{i+1}$, we can iterate the bound and get that $\forall$ $i\in\set{0,...,N-1}$
\begin{align*}
    & \esp{Y_{t_i}-\ug{i} (X_{t_i}^\pi)}^2 \\
     &~{} \le N C\sum_{k=i}^{N-1}\esp{\ug{k}  (X_{t_k}^\pi) -\widehat{\ca{V}}_{t_k}}^2 + C(N-i)h^2 \\
    &~{} \quad+ C\sum_{k=i}^{N-1}\left[ \bb{E}\parent{\int_{t_k}^{t_{k+1}} |Y_s-Y_{t_k}|^2ds} +\bb{E}\parent{\int_{t_k}^{t_{k+1}}  |Z_s-\overline{Z}_{t_k}|^2ds}
    + \bb{E}\parent{\int_{t_k}^{t_{k+1}} |\Gamma_s-\overline{\Gamma}_{t_k}|^2ds}\right] \\
    &\quad+Ch\sum_{k=i}^{N-1}\bb{E}\parent{\int_{t_k}^{t_{k+1}} |f(\Theta_s)|^2ds} +C\esp{Y_{t_{N}}-g(X^{\pi}_{t_N})}^2\\
    &~{} \leq NC\sum_{k=0}^{N-1}\esp{\ug{k} (X_{t_k}^\pi) -\widehat{\ca{V}}_{t_k}}^2 + CNh^2 \\
    &~{} \quad + C\sum_{k=0}^{N-1}\left[ \bb{E}\parent{\int_{t_k}^{t_{k+1}} |Y_s-Y_{t_k}|^2ds} \right. +\bb{E}\parent{\int_{t_k}^{t_{k+1}} |Z_s-\overline{Z}_{t_k}|^2ds}
    + \left.\bb{E}\parent{\int_{t_k}^{t_{k+1}} |\Gamma_s-\overline{\Gamma}_{t_k}|^2ds}\right] \\
    &~{} \quad + Ch\sum_{k=0}^{N-1}\bb{E}\parent{\int_{t_k}^{t_{k+1}}| f(\Theta_s)|^2ds} +C\esp{Y_{t_{N}}-g(X^{\pi}_{t_N})}^2.
\end{align*}
 Applying maximum on $i\in\set{0,...,N-1}$, recalling \eqref{errores0} and the bounds from Lemmas (\ref{lemma:f-int-bound}) and (\ref{lemma:y-bound}), 
   % \begin{empheq}[box=\widefbox]{align}
   \begin{align*}
    \max_{i\in\set{0,...,N-1}}\esp{Y_{t_i}-\ug{i}(X_{t_i}^\pi)}^2\le &~{} C\Bigg[N\sum_{i=0}^{N-1} \esp{\ug{i}(X_{t_i}^\pi)-\widehat{\ca{V}}_{t_i}}^2  + O(h) +\varepsilon^{Z}(h)+\varepsilon^{\Gamma}(h) +  \esp{g(X_T)-g(X_T^\pi)}^2  \Bigg].
   \end{align*}
   This is nothing that \eqref{step2}.
   
%    \end{empheq}
\begin{remark}\label{delta-remark}
	The classic bound used at the beginning of step $2$ could have been stated using a fixed parameter $\delta\in (0,1)$ in the form: $(a+b)^2 \ge (1-h^{\delta})a^2 + (1-\frac{1}{h^\delta})b^2$. This change makes $N$ become $N^{\delta}$, which is better. However, at some point of the proof the value $\delta = 1$ is necessary.
\end{remark}
\subsection{Step 3}
%\textcolor{blue}{En este paso se comienza a trabajar $L$}\\

Estimate \eqref{step2} contains some uncontrolled terms on its RHS. Here the purpose is to bound the term
\[
\sum_{i=0}^{N-1} \esp{\ug{i}(X_{t_i}^\pi)-\widehat{\ca{V}}_{t_i}}^2,
\]
in terms of more tractable terms. In this step we will prove
\begin{lemma}\label{lemma:5.3}
There exists $C>0$ fixed, linearly depending on the dimension, such that,
\begin{align}
    \max_{i\in\set{0,...,N-1}}\esp{Y_{t_i}-\ug{i}(X_{t_i}^\pi)}^2\le &~{} C\Bigg[ h + \sum_{i=0}^{N-1} (N\ca{E}_i^v + \ca{E}_i^z + \ca{E}_i^\gamma)  +\varepsilon^{Z}(h)+\varepsilon^{\Gamma}(h) +  \esp{g(X_T)-g(X_T^\pi)}^2  \Bigg],
    \label{step34}
\end{align}
with $\ca{E}_i^v$, $\ca{E}_i^z$ and $\ca{E}_i^\gamma$ defined in \eqref{errores}.
\end{lemma}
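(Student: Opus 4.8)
Starting from \eqref{step2}, which already bounds $\max_i \esp{Y_{t_i}-\ug i(X_{t_i}^\pi)}^2$ by $C\big[N\sum_i \esp{\ug i(X_{t_i}^\pi)-\widehat{\ca V}_{t_i}}^2 + h + \varepsilon^Z(h)+\varepsilon^\Gamma(h) + \esp{g(X_T)-g(X_T^\pi)}^2\big]$, the only term not yet of the desired form is $N\sum_i \esp{\ug i(X_{t_i}^\pi)-\widehat{\ca V}_{t_i}}^2$. So the task reduces to proving, for each $i$,
\[
\esp{\ug i(X_{t_i}^\pi)-\widehat{\ca V}_{t_i}}^2 \;\lesssim\; \ca E_i^v + \tfrac1N\big(\ca E_i^z + \ca E_i^\gamma\big) + (\text{terms absorbable into the RHS of \eqref{step34}}),
\]
after which multiplying by $N$ and summing gives exactly $\sum_i(N\ca E_i^v + \ca E_i^z + \ca E_i^\gamma)$ plus controllable leftovers. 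The key tool is the definition of $L_i$ in \eqref{eq:loss}: since $(\ug i,\widehat{\ca Z}_i,\widehat{\ca G}_i) = (\ca U_i(\cdot;\theta^*),\ca Z_i(\cdot;\theta^*),\ca G_i(\cdot,\circ;\theta^*))$ minimizes $L_i$, we have $L_i(\theta^*) \le L_i(\xi)$ for every $\xi$; in particular we may plug in the parameters realizing (up to $\epsilon$) the infima defining $\ca E_i^v,\ca E_i^z,\ca E_i^\gamma$.

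\textbf{Step A: an orthogonal (Pythagorean) decomposition of $L_i(\theta)$.} The central identity is that the loss $L_i(\theta) = \esp{\ug{i+1}(X^\pi_{t_{i+1}}) - F(t_i,X^\pi_{t_i},\ca U_i(X^\pi_{t_i};\theta),\ca Z_i(X^\pi_{t_i};\theta),\ca G_i(X^\pi_{t_i},\cdot;\theta),h,\Delta W_i)}^2$ decomposes, modulo $O(h^2)$-type remainders, into
\[
L_i(\theta) = \text{(const indep.\ of }\theta) + \esp{v_i(X^\pi_{t_i}) - \ca U_i(X^\pi_{t_i};\theta)}^2\big(1+O(h)\big) + h\,\esp{z_i(X^\pi_{t_i}) - \ca Z_i(X^\pi_{t_i};\theta)}^2 + h\,\esp{\norm{\gamma_i(\cdot,X^\pi_{t_i}) - \ca G_i(X^\pi_{t_i},\cdot;\theta)}_{L^2(\lambda)}^2} + (\text{cross terms}).
\]
This mirrors the analogous computation in \cite{DBS}: one inserts the martingale representation $\ug{i+1}(X^\pi_{t_{i+1}}) = \bb E_i(\ug{i+1}(X^\pi_{t_{i+1}})) + \int_{t_i}^{t_{i+1}}\widehat Z_s\cdot dW_s + \int_{t_i}^{t_{i+1}}\!\int_{\Rd}\widehat U_s(y)\overline\mu(ds,dy)$ established earlier, recalls that $\widehat{\ca V}_{t_i}$ (i.e.\ $v_i(X^\pi_{t_i})$), $\gb Z_{t_i}$ ($=z_i(X^\pi_{t_i})$) and $\gb U_{t_i}$ ($=\gamma_i(\cdot,X^\pi_{t_i})$) are precisely the conditional-expectation projections, and uses the conditional Itô isometry (Lemma \ref{lem:Ito}), Claim \ref{lem:martingales}, and the independence of $W$ and $\overline\mu$ to kill cross terms involving $\Delta W_i\int\!\!\int\overline\mu$, $\Delta M_i\int Z\,dW$, etc. The terms $\esp{\gb\Gamma_{t_i}-\prom{\ca G}_i(X^\pi_{t_i};\theta)}^2 \le C\,\esp{\norm{\gb U_{t_i}(\cdot)-\ca G_i(X^\pi_{t_i},\cdot;\theta)}^2_{L^2(\lambda)}}$ bound derived in the preliminaries is exactly what lets the one-dimensional $\Gamma$-discrepancy inside $F$ be dominated by the $\ca G_i$-discrepancy. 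The Lipschitz bound on $f$ inside $F$ contributes only $O(h)$-weighted corrections coupling the three discrepancies, absorbed by Young's inequality.

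\textbf{Step B: compare the minimizer to the approximators realizing the $\ca E_i$'s, then iterate.} Applying $L_i(\theta^*)\le L_i(\xi)$ with $\xi$ chosen so that $\ca U_i(\cdot;\xi)$ is $\epsilon$-optimal in \eqref{errores} (and similarly for $\ca Z_i,\ca G_i$ — note the infima in \eqref{errores} are over the same $\xi$, so strictly one takes a common near-optimizer, which is legitimate after observing the three infima can be made simultaneously small by Theorem \ref{uat} applied to the product network), the constant and cross terms cancel and one obtains
\[
\esp{\widehat{\ca V}_{t_i}-\ug i(X^\pi_{t_i})}^2(1-Ch) \;\le\; C\big(\ca E_i^v + h\,\ca E_i^z + h\,\ca E_i^\gamma\big) + C h\,\esp{Y_{t_i}-\widehat{\ca V}_{t_i}}^2 + (\text{same } O(h^2), \varepsilon^Z,\varepsilon^\Gamma\text{-type terms as in Step 1}).
\]
Here the $\esp{Y_{t_i}-\widehat{\ca V}_{t_i}}^2$ term is itself controlled by \eqref{step1}, hence ultimately by $\esp{H_{i+1}^2}$ plus the small terms. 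Now multiply by $N=T/h$: the $\ca E_i^v$ picks up the factor $N$ as desired, while $h\ca E_i^z$ and $h\ca E_i^\gamma$ become $O(1)\cdot\ca E_i^z$ and $O(1)\cdot\ca E_i^\gamma$; the $h\,\esp{Y_{t_i}-\widehat{\ca V}_{t_i}}^2$ term becomes $O(1)\esp{Y_{t_i}-\widehat{\ca V}_{t_i}}^2$, and — crucially — the $\esp{H_{i+1}^2}=\esp{Y_{t_{i+1}}-\ug{i+1}(X^\pi_{t_{i+1}})}^2$ that hides inside it must be handled by a discrete Grönwall / backward-iteration argument: summing over $i$, the terms $\sum_i N\esp{\ug i(X^\pi_{t_i})-\widehat{\ca V}_{t_i}}^2$ on both sides of \eqref{step2} are reorganized so that the self-referential $Y-\widehat{\ca V}$ and $H_{i+1}$ contributions can be absorbed to the left (using $h$ small, $Ch\le 1/2$), leaving only $\sum_i(N\ca E_i^v+\ca E_i^z+\ca E_i^\gamma)$, $O(h)$, $\varepsilon^Z(h)$, $\varepsilon^\Gamma(h)$, and $\esp{g(X_T)-g(X_T^\pi)}^2$. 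Feeding this into \eqref{step2} and taking the maximum over $i$ yields \eqref{step34}.

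\textbf{Main obstacle.} The delicate point is Step A: producing the clean Pythagorean split of $L_i(\theta)$ with the correct $h$-weights ($1$ on the $v$-term, $h$ on the $z$- and $\gamma$-terms) and verifying that every cross term is either exactly zero (by Lemma \ref{lem:Ito}, Claim \ref{lem:martingales}, and $W\indep\overline\mu$) or $O(h)$-small and absorbable. In particular one must be careful that the $\Gamma$-argument of $f$ inside $F$ — which is $\int_{\Rd}\ca G_i(X^\pi_{t_i},y;\theta)\lambda(dy)$, a contraction of the high-dimensional object $\ca G_i$ — is controlled by $\ca E_i^\gamma$ and not by something uncontrolled, which is exactly where the finiteness of $\lambda$ (via the $\esp{\gb\Gamma_{t_i}-\prom{\ca G}_i}^2\le C\esp{\norm{\cdots}^2_{L^2(\lambda)}}$ bound) and Remark \ref{delta-remark}'s insistence that $\delta=1$ both enter. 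Everything after the split is a routine discrete Grönwall argument of the type already performed in Steps 1–2.
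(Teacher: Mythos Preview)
Your overall strategy is right and matches the paper's: decompose $L_i(\theta)$ orthogonally (your Step~A), compare $\hat L_i(\theta^*)\le \hat L_i(\xi)$ for near-optimal $\xi$ (Step~B), and feed the result into \eqref{step2}. Step~A is essentially correct.

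The gap is in Step~B. You write
\[
(1-Ch)\,\esp{\widehat{\ca V}_{t_i}-\ug i(X^\pi_{t_i})}^2 \;\le\; C\big(\ca E_i^v + h\,\ca E_i^z + h\,\ca E_i^\gamma\big) + Ch\,\esp{Y_{t_i}-\widehat{\ca V}_{t_i}}^2 + \ldots
\]
and then invoke a discrete Gr\"onwall to absorb the extra $Ch\,\esp{Y_{t_i}-\widehat{\ca V}_{t_i}}^2$. Two problems. First, this term is simply not there: $L_i(\theta)$ and its orthogonal decomposition involve only the $\ca F_{t_i}$-measurable triple $(\widehat{\ca V}_{t_i},\gb Z_{t_i},\gb U_{t_i})$ and the candidate networks --- the true process $Y_{t_i}$ never enters. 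The correct upper and lower bounds on $\hat L_i(\theta)$ (Young plus Lipschitz of $f$ in both directions) are expressed purely through $\esp{\widehat{\ca V}_{t_i}-\ca U_i(\cdot;\theta)}^2$, $\esp{\gb Z_{t_i}-\ca Z_i(\cdot;\theta)}^2$ and $\bb E\norm{\gb U_{t_i}-\ca G_i(\cdot,\cdot;\theta)}_{L^2(\lambda)}^2$, and comparing them at $\theta^*$ versus the infimum gives directly
\[
\esp{\widehat{\ca V}_{t_i}-\ug i(X^\pi_{t_i})}^2 + \tfrac{h}{2}\,\bb E\big|\gb Z_{t_i}-\widehat{\ca Z}_i(X^\pi_{t_i})\big|^2 + \tfrac{h}{2}\,\bb E\norm{\gb U_{t_i}-\widehat{\ca G}_i(X^\pi_{t_i},\cdot)}_{L^2(\lambda)}^2 \;\le\; C\big(\ca E_i^v + h\,\ca E_i^z + h\,\ca E_i^\gamma\big),
\]
with no remainder. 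Second, even if the spurious term were present, your Gr\"onwall would not close: after multiplying by $N$ and summing over $i$, the contribution $\sum_i\esp{Y_{t_i}-\widehat{\ca V}_{t_i}}^2$ is, via \eqref{step1}, of order $\sum_i\esp{H_{i+1}^2}\sim N\max_i\esp{H_i^2}$, so you would arrive at $\max_i\esp{H_i^2}\le C\big[\ldots + N\max_i\esp{H_i^2}\big]$, and no smallness of $h$ absorbs a factor $N$ to the left.

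The fix is to drop the spurious term: once the clean display above is in hand, plug it straight into \eqref{step2}; no Gr\"onwall is needed at this stage. (Your invocation of Remark~\ref{delta-remark} is also misplaced --- that remark is about the proof of \eqref{step2} itself, not about this step.)
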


In what follows, we will prove \ref{step34}.

\medskip

Fix $i\in\set{0,...,N-1}$. Recall the martingale $(N_t)_{t\in[t_i,t_{i+1}]}$ and take $t=t_{i+1}$,
\begin{align*}
    \ug{i+1} (X^{\pi}_{t_{i+1}}) = \bb{E}_i \parent{\ug{i+1}(X^{\pi}_{t_{i+1}})} + \int_{t_i}^{t_{i+1}} \widehat{Z}_s\cdot dW_s + \int_{t_i}^{t_{i+1}}\int_{\Rd} \widehat{U}_s(y)\overline{\mu}(ds,dy).
\end{align*}
Now we replace the definition of $\widehat{\ca{V}}_{t_i}$,
\begin{align}
    \ug{i+1} (X^{\pi}_{t_{i+1}}) = \widehat{\ca{V}}_{t_i} - f(t_i,X^{\pi}_{t_i},\widehat{\ca{V}}_{t_i}, \gb{Z}_{t_i},\gb{\Gamma}_{t_i})h +\int_{t_i}^{t_{i+1}} \widehat{Z}_s\cdot dW_s + \int_{t_i}^{t_{i+1}}\int_{\Rd} \widehat{U}_s(y)\overline{\mu}(ds,dy).
    \label{eq:rep}
\end{align}

In what follows recall the value of $F$ in the loss function $L_i(\theta)$ (\ref{eq:loss}) evaluated at the point 
\[
(t_i,X^\pi_{t_i},\ca{U}_i(X^\pi_{t_i};\theta) ,\ca{Z}_i(X^\pi_{t_i};\theta),\ca{G}_i(X^\pi_{t_i},\cdot;\theta),h,\Delta W_i),
\] 
% = \ca{I}[\ca{U}_i(X^\pi_{t_i};\theta)](t_i,X^\pi_{t_i})$, see
and that $\prom{\ca{G}}_i(X_{t_i};\theta)$ is given in \eqref{calG}: 
\begin{align*}
& F\left( t_i,X^\pi_{t_i},\ca{U}_i(X^\pi_{t_i};\theta) ,\ca{Z}_i(X^\pi_{t_i};\theta),\ca{G}_i(X^\pi_{t_i},\cdot;\theta),h,\Delta W_i \right) \\
 &~{} = \ca{U}_i(X^\pi_{t_i};\theta) -hf(t_i,X^\pi_{t_i},\ca{U}_i(X^\pi_{t_i};\theta),\ca{Z}_i(X^\pi_{t_i};\theta) , \prom{\ca{G}}_i(X_{t_i};\theta)) +  \ca{Z}_i(X^\pi_{t_i};\theta)\cdot \Delta W_i +\int_{\Rd} \ca{G}_i(X^\pi_{t_i},y;\theta) \overline{\mu}\big((t_i,t_{i+1}],dy\big).
\end{align*}

Now fix a parameter $\theta$ and replace (\ref{eq:rep}) on $L_i(\theta)$:
\begin{align*}
    &\bb{E} \Big|\ug{i+1} (X^{\pi}_{t_{i+1}})  -F(t_i,X^\pi_{t_i},\ca{U}_i(X^\pi_{t_i};\theta),\ca{Z}_i(X^\pi_{t_i};\theta),\ca{G}_i(X^\pi_{t_i},\cdot;\theta),\Delta t_i,\Delta W_i)\Big|^2\\
    &=\bb{E}\Big|\widehat{\ca{V}}_{t_i} - f(t_i,X^{\pi}_{t_i},\widehat{\ca{V}}_{t_i}, \gb{Z}_{t_i},\gb{\Gamma}_{t_i})h +\int_{t_i}^{t_{i+1}} \widehat{Z}_s\cdot dW_s + \int_{t_i}^{t_{i+1}}\int_{\Rd} \widehat{U}_s(y)\overline{\mu}(ds,dy) - \ca{U}_i(X^{\pi}_{t_i};\theta)\\
    &\quad +h f(t_i,X^{\pi}_{t_i},\ca{U}_i(X_{t_i};\theta),\ca{Z}_i(X_{t_i};\theta), \prom{\ca{G}}_i(X_{t_i};\theta)) -\ca{Z}_i(X^{\pi}_{t_i};\theta)\cdot \Delta W_i-\int_{\Rd}\ca{G}_i(X^{\pi}_{t_i},y;\theta)\overline{\mu}(\Delta t_i,dy)
    \Big|^2\\
    &= \bb{E} \Bigg|\left[ \widehat{\ca{V}}_{t_i}-\ca{U}_i(X^{\pi}_{t_i};\theta) + h\left( f(t_i,X^{\pi}_{t_i},\ca{U}_i(X^{\pi}_{t_i};\theta),\ca{Z}_{i}(X^{\pi}_{t_i};\theta),\prom{\ca{G}}_{i}(X^{\pi}_{t_i};\theta)) - f(t_i,X^{\pi}_{t_i},\widehat{\ca{V}}_{t_i},\gb{Z}_{t_i},\gb{\Gamma}_{t_i}) \right) \right]\\
    &\qquad +
    \left[
    \integral\widehat{Z}_s\cdot dW_s - \integral\ca{Z}_i(X^{\pi}_{t_i};\theta)\cdot dW_s + \integral\int_{\Rd}\widehat{U}(s,y)\overline{\mu}(ds,dy)-\integral\int_{\Rd}\ca{G}_i(X^{\pi}_{t_i},y;\theta)\overline{\mu}(ds,dy)
    \right]
    \Bigg|^2\\
    &=\esp{a+b}^2.
\end{align*}
Note that $b$ is a sum of martingale's differences and therefore $\bb{E}_i (b)=0$. By independence of $\mu$ with $W$, we can deduce that
\begin{align*}
    \bb{E}(b^2) = \bb{E}\parent{\integral[\widehat{Z}_s-\ca{Z}_{i}(X^{\pi}_{t_i};\theta)]dW_s}^2+\bb{E}\parent{\integral\int_{\Rd}[\widehat{U}(s,y)-\ca{G}_i(X^{\pi}_{t_i},y;\theta)]\overline{\mu}(ds,dy)}^2;
\end{align*}
and, since the random variables that appears on $a$ are $\ca{F}_{t_i}$-measurable, $\bb{E}(ab)=\bb{E}\parent{\bb{E}_i(ab)}=\bb{E}\parent{a\bb{E}_i(b)}=0$, we have that
\begin{align*}
    L_i(\theta) &= \bb{E} \left( \widehat{\ca{V}}_{t_i}-\ca{U}_i(X^{\pi}_{t_i};\theta) + h\left[ f(t_i,X^{\pi}_{t_i},\ca{U}_i(X^{\pi}_{t_i};\theta),\ca{Z}_{i}(X^{\pi}_{t_i};\theta),\prom{\ca{G}}_{i}(X^{\pi}_{t_i};\theta)) - f(t_i,X^{\pi}_{t_i},\widehat{\ca{V}}_{t_i},\gb{Z}_{t_i},\gb{\Gamma}_{t_i}) \right] \right)^2\\
    &+ \underbrace{\bb{E}\parent{\integral[\widehat{Z}_s-\ca{Z}_{i}(X^{\pi}_{t_i};\theta)]dW_s}^2+\bb{E}\parent{\integral\int_{\Rd}[\widehat{U}(s,y)-\ca{G}_i(X^{\pi}_{t_i},y;\theta)]\overline{\mu}(ds,dy)}^2}_{c_0}.
\end{align*}
By the same arguments on equations (\ref{eq:ortogonal1}) and (\ref{eq:ortogonal2}),
\begin{align*}
    c_0 =&~{} \bb{E}\parent{\integral|\widehat{Z}_s-\gb{Z}_{t_i}|^2ds} + h\esp{\gb{Z}_{t_i}-\ca{Z}_{i}(X^{\pi}_{t_i};\theta)}^2\\
    &~{} + \bb{E}\parent{\integral\int_{\Rd}|\widehat{U}_s (y)-\gb{U}_{t_i}(y)|^2\lambda(dy)ds} + h\bb{E}\parent{\int_{\Rd} \big(\gb{U}_{t_i}(y)-\ca{G}_{i}(X^{\pi}_{t_i},y;\theta)\big)^2\lambda(dy)}.
\end{align*}
With this decomposition of $L_i(\theta)$, for optimization reasons, we can ignore the part that does not depend on the optimization parameter $\theta$. Let
\begin{align*}
&~{}    \hat{L}_i(\theta)  \\
    &~{} = \bb{E} \left( \widehat{\ca{V}}_{t_i}-\ca{U}_i(X^{\pi}_{t_i};\theta) + h\left[ f(t_i,X^{\pi}_{t_i},\ca{U}_i(X^{\pi}_{t_i};\theta),\ca{Z}_{i}(X^{\pi}_{t_i};\theta),\prom{\ca{G}}_{i}(X^{\pi}_{t_i};\theta)) - f(t_i,X^{\pi}_{t_i},\widehat{\ca{V}}_{t_i},\gb{Z}_{t_i},\gb{\Gamma}_{t_i})\right] \right)^2\\
    &\quad + h\, \esp{\gb{Z}_{t_i}-\ca{Z}_{i}(X^{\pi}_{t_i};\theta)}^2 + h\,\bb{E}\parent{\int_{\Rd} \big(\gb{U}_{t_i}(y)-\ca{G}_{i}(X^{\pi}_{t_i},y;\theta)\big)^2\lambda(dy)}.
\end{align*}
Let $\gamma>0$ and use Young inequality and the Lipschitz condition on $f$ to find that
\begin{align*}
    &\bb{E} \left( \widehat{\ca{V}}_{t_i}-\ca{U}_i(X^{\pi}_{t_i};\theta) + h\left[ f(t_i,X^{\pi}_{t_i},\ca{U}_i(X^{\pi}_{t_i};\theta),\ca{Z}_{i}(X^{\pi}_{t_i};\theta),\prom{\ca{G}}_{i}(X^{\pi}_{t_i};\theta))- f(t_i,X^{\pi}_{t_i},\widehat{\ca{V}}_{t_i},\gb{Z}_{t_i},\gb{\Gamma}_{t_i}) \right] \right)^2\\
    &\le \parent{1+\gamma h}\esp{\widehat{\ca{V}}_{t_i}-\ca{U}_i(X^{\pi}_{t_i};\theta)}^2\\
    &\quad +  \parent{1+\frac{1}{\gamma h}}h^2K^2\bb{E}\parent{|\widehat{\ca{V}}_{t_i}-\ca{U}_i(X^{\pi}_{t_i};\theta)|^2+|\ca{Z}_i (X^{\pi}_{t_i};\theta)-\gb{Z}_{t_i}|^2+|\prom{\ca{G}}_i(\xscheme{i};\theta)-\gb{\Gamma}_{t_i}|^2}\\
    &\le(1+Ch)\esp{\widehat{\ca{V}}_{t_i}-\ca{U}_i(X^{\pi}_{t_i};\theta)}^2 + Ch\Bigg[\bb{E}|\ca{Z}_i (X^{\pi}_{t_i};\theta)-\gb{Z}_{t_i}|^2+\bb{E}\parent{\norm{\gb{U}_{t_i}(\cdot)-\ca{G}_i(\xscheme{i},\cdot;\theta)}_{L^2(\lambda)}^2 }\Bigg].
\end{align*}
Therefore, we have an upper bound on $L(\theta)$ for all $\theta$
\begin{align*}
    \hat{L}(\theta)\le C\esp{\widehat{\ca{V}}_{t_i}-\ca{U}_i(X^{\pi}_{t_i};\theta)}^2 + h\parent{\bb{E}|\ca{Z}_i (X^{\pi}_{t_i};\theta) - \gb{Z}_{t_i}|^2}+h\bb{E}\parent{\norm{\gb{U}_{t_i}(\cdot)-\ca{G}_i(\xscheme{i},\cdot;\theta)}_{L^2(\lambda)}^2}.
\end{align*}
To find a lower bound, we use $(a+b)^2\ge(1-\gamma h)a^2+\left(1-\frac{1}{\gamma h}\right)b^2$ with $\gamma>0$
\begin{align*}
    &\bb{E} \left( \widehat{\ca{V}}_{t_i}-\ca{U}_i(X^{\pi}_{t_i};\theta) + h\left[ f(t_i,X^{\pi}_{t_i},\widehat{\ca{V}}_{t_i},\gb{Z}_{t_i},\gb{\Gamma}_{t_i}) - f(t_i,X^{\pi}_{t_i},\ca{U}_i(X^{\pi}_{t_i};\theta),\ca{Z}_{i}(X^{\pi}_{t_i};\theta),\prom{\ca{G}}_{i}(X^{\pi}_{t_i};\theta)) \right] \right)^2\\
    &\ge \parent{1-Ch}\esp{\widehat{\ca{V}}_{t_i}-\ca{U}_i(X^{\pi}_{t_i};\theta)}^2 -\frac{h}{2} \parent{\bb{E}|\ca{Z}_i (X^{\pi}_{t_i};\theta)-\gb{Z}_{t_i}|^2+\bb{E}|\prom{\ca{G}}_i (X^{\pi}_{t_i};\theta)-\gb{\Gamma}_{t_i} |^2};
\end{align*}
where we used $\gamma = 6K^2$. Then,
\begin{align*}
    \hat{L}(\theta) \ge &~{} \parent{1- Ch}\esp{\widehat{\ca{V}}_{t_i}-\ca{U}_i(X^{\pi}_{t_i};\theta)}^2 \\
    &~{} -\frac{h}{2} \Bigg[\bb{E}|\ca{Z}_i (X^{\pi}_{t_i};\theta)-\gb{Z}_{t_i}|^2+\bb{E}\parent{\int_{\Rd} \big(\gb{U}_{t_i}(y)-\ca{G}_{i}(X^{\pi}_{t_i},y;\theta)\big)^2\lambda(dy)} \Bigg].
    %&~{} + h\esp{\gb{Z}_{t_i} -\ca{Z}_{i}(X^{\pi}_{t_i};\theta)}^2 + h\bb{E}\parent{\int_{\Rd} %\big(\gb{U}_{t_i}(y)-\ca{G}_{i}(X^{\pi}_{t_i},y;\theta)\big)^2\lambda(dy)}.
\end{align*}
Connecting this bounds using that $\hat{L}(\theta^*)\le\hat{L}(\theta)$ yields that $\forall\theta$,
\begin{align*}
    \parent{1- Ch}\esp{\widehat{\ca{V}}_{t_i}-\ca{U}_i(X^{\pi}_{t_i};\theta^*)}^2 +\frac{h}{2} \bb{E}|\gb{Z}_{t_i}-\ca{Z}_i (X_{t_i},\theta^*)|^2+\frac{h}{2}\bb{E}\parent{\int_{\Rd} \big(\gb{U}_{t_i}(y)-\ca{G}_{i}(X^{\pi}_{t_i},y;\theta^*)\big)^2\lambda(dy)}\\
    \le
    C\esp{\widehat{\ca{V}}_{t_i}-\ca{U}_i(X^{\pi}_{t_i};\theta)}^2 + h\parent{\bb{E}|\gb{Z}_{t_i}-\ca{Z}_i (X_{t_i},\theta)|^2}+h\bb{E}\parent{\norm{\gb{U}_{t_i}(\cdot)-\ca{G}_i(\xscheme{i},\cdot;\theta)}_{L^2(\lambda)}^2}.
\end{align*}
By taking infimum on the right side and $h$ small such that $(1-Ch)\ge \frac{1}{2}$
\begin{align}
   &~{}  \esp{\widehat{\ca{V}}_{t_i}-\widehat{\ca{U}}_i(X^{\pi}_{t_i})}^2 +\frac{h}{2} \bb{E}|\gb{Z}_{t_i}-\widehat{\ca{Z}}_i (X_{t_i})|^2+\frac{h}{2}\bb{E}\parent{\int_{\Rd} \big(\gb{U}_{t_i}(y)-\widehat{\ca{G}}_{i} (X^{\pi}_{t_i},y)\big)^2\lambda(dy)}
  \nonumber\\
  &~{} \qquad   \le C\parent{\ca{E}_i^v + h\ca{E}_i^z + h\ca{E}_i^{\gamma} }.
    \label{eq:cota-imp}
\end{align}
Using this bound on what we found on steps 1 and 2,  we find
\begin{align*}
    \max_{i\in\set{0,...,N-1}}\esp{Y_{t_i}-\ug{i}(X_{t_i}^\pi)}^2\le &~{} C\left[ h + \sum_{i=0}^{N-1} (N\ca{E}_i^v + \ca{E}_i^z + \ca{E}_i^\gamma)  + \sum_{i=0}^{N-1}\bb{E}\parent{\integral|Y_s-Y_{t_i}|^2ds} \right.
    \\ 
    & \qquad + \varepsilon^{Z}(h)+\varepsilon^{\Gamma}(h) +  \esp{g(X_T)-g(X_T^\pi)}^2  \Bigg].
    \nonumber
\end{align*}
Finally, using Proposition \ref{lemma:y-bound}, one ends the proof of \eqref{step34}.

\subsection{Step 4}

    We are going to show some bounds for the terms involving the $\Gamma$ and $U$ components, the same bounds holds for the $Z$ component and are shown in \cite{DBS}. By using (\ref{eq: 4.11g}) on (\ref{eq:ortogonal2}),
\begin{align*}
\bb{E}\parent{\integral |\Gamma_t - \gb{\Gamma}_{t_i}|^2 dt} \le &~{} \bb{E}\parent{\integral |\Gamma_t - \overline{\Gamma}_{t_i}|^2 dt} + 2\lambda(\Rd)\parent{\bb{E}\parent{H_{i+1}^2} - \esp{\bb{E}_i (H_{i+1})}^2} \\
&+ 2h\lambda(\Rd)\bb{E}\parent{\integral f(\Theta_r)^2 dr},
\end{align*}
which implies, after using \eqref{errores0} and  \eqref{lemma:f-int-bound},
\begin{align*}
    \bb{E}\parent{\sum_{i=0}^{N-1}\integral |\Gamma_t - \gb{\Gamma}_{t_i}|^2 dt} &\le \bb{E}\parent{\sum_{i=0}^{N-1}\integral |\Gamma_t - \overline{\Gamma}_{t_i}|^2 dt} + C\sum_{i=0}^{N-1}\parent{\bb{E}\parent{H_{i+1}^2} - \esp{\bb{E}_i (H_{i+1})}^2}+ Ch\\
    &= \varepsilon^{\Gamma}(h) + Ch + C\sum_{i=0}^{N-1}\parent{\bb{E}\parent{H_{i+1}^2} - \esp{\bb{E}_i (H_{i+1})}^2}.
\end{align*}
From \cite{DBS} we get the analogous bound for the $Z$ component, therefore, putting this two together yields
\begin{align}\label{eq:final}
    \bb{E}\parent{\sum_{i=0}^{N-1}\integral \big(|Z_t - \gb{Z}_{t_i}|^2 + |\Gamma_t - \gb{\Gamma}_{t_i}|^2\big) dt}  \le 
    \varepsilon^{Z}(h)+\varepsilon^{\Gamma}(h) + Ch + C\sum_{i=0}^{N-1}\parent{\bb{E}\parent{H_{i+1}^2} - \esp{\bb{E}_i (H_{i+1})}^2}.
\end{align}
This tells us that the next mission in this proof is to give a suitable bound for $\bb{E}\parent{H_{i+1}^2} - \esp{\bb{E}_i (H_{i+1})}^2$. Recall from \eqref{def_H} that $H_{i+1} = Y_{t_{i+1}} - \widehat{\ca{U}}_{i+1} (\xscheme{i+1})$, then
\begin{equation}\label{EHEH}
\begin{aligned}
    \sum_{i=0}^{N-1}\parent{\bb{E}\parent{H_{i+1}^2} - \esp{\bb{E}_i (H_{i+1})}^2} &= \sum_{i=0}^{N-1}\bb{E}(H_{i+1}^2)-\sum_{i=0}^{N-1}\esp{\bb{E}_i(H_{i+1})}^2\\
    &= \esp{Y_{t_N}-\widehat{\ca{U}}_N(\xscheme{N})} + \sum_{i=0}^{N-2}\bb{E}(H_{i+1}^2) - \sum_{i=0}^{N-1}\esp{\bb{E}_i(H_{i+1})}^2\\
    &\le \esp{Y_{t_N}-\widehat{\ca{U}}_N(\xscheme{N})} +\bb{E}(H_{0}^2) + \sum_{i=1}^{N-1}\bb{E}(H_{i}^2) - \sum_{i=0}^{N-1}\esp{\bb{E}_i(H_{i+1})}^2\\
    &=  \esp{g(X_T)-g(X_T^\pi)}^2  +\sum_{i=0}^{N-1} \left( \bb{E}(H_{i}^2) - \esp{\bb{E}_i(H_{i+1})}^2 \right).
\end{aligned}
\end{equation}
From (\ref{eq:4.13}) and (\ref{eq:4.9}) we have an upper and lower bound on $\esp{Y_{t_i}-\widehat{\ca{V}}_{t_i}}^2$. Indeed, first one has
\begin{equation}\label{eq:delta-s4}
 (1-h)\esp{Y_{t_i}-\ug{i}(X^\pi_{t_i})}^2 \leq   \esp{Y_{t_i}-\widehat{\ca{V}}_{t_i}}^2   + \parent{\frac{1}{h}-1}\esp{\ug{i}(X^\pi_{t_i})-\widehat{\ca{V}}_{t_i}}^2.
\end{equation}
Second, we have that for all $\gamma>0$
\begin{align*}
    &\parent{1-h}\, \esp{Y_{t_i}-\widehat{\ca{U}}_{i}(\xscheme{i})}^2 \le   \parent{\frac{1}{h} -1}\esp{\widehat{\ca{U}}_{i}(\xscheme{i})-\widehat{\ca{V}}_{t_i}}^2 + (1+\gamma h)\esp{\bb{E}_i(H_{i+1})}^2 
     \\
    &+(1+\gamma h)\frac{C}{\gamma}\bigg[\underbrace{h^2+ \bb{E}\parent{\integral |Y_s - Y_{t_i}|^2 ds} + h\esp{Y_{t_i}-\widehat{\ca{V}}_{t_i}}^2+ \bb{E}\parent{\integral |Z_s-\gb{Z}_{t_i}|^2 ds} + \bb{E}\parent{\integral |\Gamma_s-\gb{\Gamma}_{t_i}|^2 ds}}_{B_i}\bigg].
\end{align*}
Let us call the expression inside the squared brackets by $B_i$. Subtracting $(1-h) \bb{E}\left|\bb{E}_i(H_{i+1})\right|^2$ and dividing by $(1-h)$,
\[
\begin{aligned}
    \bb{E}(H_i^2) - \esp{\bb{E}_i(H_{i+1})}^2 \le &~{} \frac{1}{h}\esp{\ug{i}(\xscheme{i}) - \widehat{\ca{V}}_{t_i} }^2 + \parent{\frac{h+ \gamma h}{1-h}}\esp{\bb{E}_i(H_{i+1})}^2 +\frac{C}{\gamma}\frac{(1+ \gamma h)}{(1-h)}B_i.
\end{aligned}
\]
For $\gamma = 3C$ and sufficiently small $h$, we can force,
\begin{align*}
    \frac{C}{\gamma}\frac{(1+\gamma h)}{(1-h)} \le \frac{1}{2}\qquad \text{and}\qquad \frac{1}{1-h}\le\frac{1}{2}.
\end{align*}
Hence,
\begin{align*}
    \bb{E}(H_i^2)& - \esp{\bb{E}_i(H_{i+1})}^2 \le \frac{1}{h}\esp{\ug{i}(\xscheme{i}) - \widehat{\ca{V}}_{t_i} }^2 + Ch\esp{\bb{E}_i(H_{i+1})}^2
    +\frac{1}{2}B_i.
\end{align*}
Finally, note that,
\begin{align}\label{eq:N-bound}
\sum_{i=0}^{N-1}\esp{\bb{E}_i(H_{i+1})}^2\le \esp{g(X_T)-g(X_T^\pi)}^2+N\underset{i=0,...,N-1}{\max} \esp{Y_{t_i}-\widehat{\ca{U}}_{i}(\xscheme{i})}^2.
\end{align}
\begin{remark}
	Note that in equation (\ref{eq:N-bound}) appears $N$ multiplying the last term. With the bounds that we have, is impossible to get rid of the $N$, and this is why the $\delta$ improvement mentioned on Remark \ref{delta-remark} will not be of much help.   
\end{remark}
Coming back to \eqref{EHEH},
\[
\begin{aligned}
 \sum_{i=0}^{N-1}\parent{\bb{E}\parent{H_{i+1}^2} - \esp{\bb{E}_i (H_{i+1})}^2}  \leq &~{}  2\esp{g(X_T)-g(X_T^\pi)}^2  + N \sum_{i=0}^{N-1}\esp{\ug{i}(\xscheme{i}) - \widehat{\ca{V}}_{t_i} }^2  \\
 &~{} + Ch N \underset{i=0,...,N-1}{\max} \esp{Y_{t_i}-\widehat{\ca{U}}_{i}(\xscheme{i})}^2 +\frac{1}{2} \sum_{i=0}^{N-1} B_i.
\end{aligned}
\]
Therefore, by plugging this bound in (\ref{eq:final}), noting that $|Y_{t_i}-\widehat{\ca{V}}_{t_i}|^2\le 2|Y_{t_i}-\widehat{\ca{U}}_{i}(\xscheme{i}) |^2 + 2|\widehat{\ca{U}}_{i}(\xscheme{i}) - \widehat{\ca{V}}_{t_i}|^2$, $hN = 1$, and using Lemma \ref{lemma:y-bound}, we have for some $C>0$,
\begin{align*}
     \bb{E}\bigg(\sum_{i=0}^{N-1}\integral \big(|Z_t - \gb{Z}_{t_i}|^2 + |\Gamma_t - \gb{\Gamma}_{t_i}|^2\big) dt\bigg) &\le  C\bigg[\esp{g(X_T)-g(X_T^\pi)}^2 + \varepsilon^{Z}(h)+\varepsilon^{\Gamma}(h) + h\\
    &~{}\qquad+ N \sum_{i=0}^{N-1}\esp{\widehat{\ca{U}}_{t_i}(\xscheme{i}) - \widehat{\ca{V}}_{t_i}}^2 + \underset{i=0,...,N-1}{\max}\esp{Y_{t_i}-\widehat{\ca{U}}_{t_i}(\xscheme{i})}^2\bigg] .
\end{align*}
Now, use (\ref{eq:cota-imp}) together with Lemma \ref{lemma:5.3} to get
\begin{align*}
    \bb{E}\bigg(\sum_{i=0}^{N-1}\integral \big(|Z_t - \gb{Z}_{t_i}|^2 + |\Gamma_t - \gb{\Gamma}_{t_i}|^2\big) dt\bigg) \le & C\bigg[\esp{g(X_T)-g(X_T^\pi)}^2 + \varepsilon^{Z}(h)+\varepsilon^{\Gamma}(h) + h\\
    &\quad+ \sum_{i=0}^{N-1} (N\ca{E}_i^v + \ca{E}_i^z + \ca{E}_i^\gamma)\bigg] .
\end{align*}
Again, recalling (\ref{eq:cota-imp}) using the previous bound and,  
\begin{align*}
    \sum_{i=0}^{N-1}\bb{E}\parent{\integral \Big[|Z_t-\widehat{\ca{Z}}_{i}(\xscheme{i})|^2 + |\Gamma_t-\widehat{\prom{\ca{G}}}_{i}(\xscheme{i})|^2\Big]dt} &\le \sum_{i=0}^{N-1}\bb{E}\parent{\integral \Big[|Z_t-\gb{Z}_{t_i}|^2 + |\Gamma_t-\gb{\Gamma}_{t_i}|^2\Big]dt}\\ 
    &\quad+ \sum_{i=0}^{N-1}h\bb{E}\parent{ \Bigg[|\gb{Z}_{t_i}-\widehat{\ca{Z}}_{i}(\xscheme{i})|^2 + \norm{\gb{U}_{t_i}(\cdot)-\widehat{\ca{G}}_{i}(\xscheme{i},\cdot)}_{L^2(\lambda)}^2\Bigg]dt},
\end{align*}
we conclude that there exist $C>0$, independent of the partition, such that for $h$ sufficiently small, 
\begin{align*}
    &\underset{i=0,...,N-1}{\max}\esp{Y_{t_i}-\widehat{\ca{U}}_{i}(\xscheme{i})}^2 + \sum_{i=0}^{N-1}\bb{E}\parent{\integral \Big[|Z_t-\widehat{\ca{Z}}_{i}(\xscheme{i})|^2 + |\Gamma_t-\widehat{\prom{\ca{G}}}_{i}(\xscheme{i})|^2\Big]dt}\\
    &\le C\left[ h + \sum_{i=0}^{N-1} (N\ca{E}_i^v + \ca{E}_i^z + \ca{E}_i^\gamma)  +\right.
    \varepsilon^{Z}(h)+\varepsilon^{\Gamma}(h) +  \esp{g(X_T)-g(X_T^\pi)}^2  \Bigg].
    \nonumber
\end{align*}
This ends the proof of the following Theorem:
%\section{Main results}\label{Sect:3}
%\subsection{Consistency of numerical schemes for PIDEs}
\begin{theorem}\label{MT1}

Under \textbf{(C)}, there exists a constant $C$ independent of the partition such that for sufficiently small $h$,
\begin{align*}
    &\underset{i=0,...,N-1}{\max}\esp{Y_{t_i}-\widehat{\ca{U}}_{i}(\xscheme{i})}^2 + \sum_{i=0}^{N-1}\bb{E}\parent{\integral \Big[ |Z_t-\widehat{\ca{Z}}_{i}(\xscheme{i})|^2 + |\Gamma_t-\widehat{\prom{\ca{G}}}_{i}(\xscheme{i})|^2\Big] dt}\\
     &~{} \qquad \le
    C\left[ h + \sum_{i=0}^{N-1} (N\ca{E}_i^v + \ca{E}_i^z + \ca{E}_i^\gamma) +\right.
    \varepsilon^{Z}(h)+\varepsilon^{\Gamma}(h) +  \esp{g(X_T)-g(X_T^\pi)}^2  \Bigg],
    \nonumber
\end{align*}
with $\ca{E}_i^v$,  $\ca{E}_i^z$  and $\ca{E}_i^\gamma$ given in \eqref{errores}, and $\varepsilon^{Z}(h)$ and $\varepsilon^{\Gamma}(h)$ defined in \eqref{errores0}.
\end{theorem}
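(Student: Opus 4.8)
The theorem is obtained by concatenating the four lemmas of Steps 1--4, so the proof is essentially a careful bookkeeping of error terms; the content that is new with respect to \cite{DBS} is the treatment of the nonlocal increment $\Delta M_i$ and of the third network $\ca{G}_i$. I would first recall the auxiliary $\ca{F}_{t_i}$-adapted processes $\widehat{\ca{V}}_{t_i},\gb{Z}_{t_i},\gb{\Gamma}_{t_i}$ (well defined for small $h$ by Lemma \ref{lemma:fixedPoint}) and the martingale $N_t=\bb{E}(\ug{i+1}(\xscheme{i+1})\mid\ca{F}_t)$ on $[t_i,t_{i+1}]$ with its representation $(\widehat Z,\widehat U)$ from Lemma \ref{MRT}. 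The recurring technical device is the $L^2(\p)$-orthogonality decompositions \eqref{eq:ortogonal1}--\eqref{eq:ortogonal2}, which split $\bb{E}\int|Z_s-\gb{Z}_{t_i}|^2\,ds$ and its $\Gamma$-analogue into an intrinsic regularity part and a part measuring $\overline Z_{t_i}-\gb{Z}_{t_i}$; the latter is then controlled, via the conditional It\^o isometry (Lemma \ref{lem:Ito}) and by pairing \eqref{eq:resta} against $\Delta W_i$ and $\Delta M_i$, by $\bb{E}(H_{i+1}^2)-\esp{\bb{E}_i(H_{i+1})}^2$ plus $h^2\bb{E}\int f(\Theta_r)^2\,dr$, with $\lambda(\Rd)$ as a multiplicative constant --- this is exactly where finiteness of $\lambda$ enters.

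Step 1 yields \eqref{step1}: from the increment \eqref{eq:resta} of the backward equation and the fixed-point identity \eqref{eq:v}, take $\bb{E}_i$ to kill the stochastic integrals, apply $(a+b)^2\le(1+\gamma h)a^2+(1+\tfrac1{\gamma h})b^2$, bound the $f$-difference by the Lipschitz hypothesis in \textbf{(C)}, insert the $X$-scheme error \eqref{error_X} and the splittings above, choose $\gamma\sim C(\lambda(\Rd)+d)$, and absorb the $\esp{Y_{t_i}-\widehat{\ca{V}}_{t_i}}^2$ term on the left for $h$ small. Step 2 complements this with the elementary lower bound $\esp{Y_{t_i}-\widehat{\ca{V}}_{t_i}}^2\ge(1-h)\esp{Y_{t_i}-\ug{i}(\xscheme{i})}^2+(1-\tfrac1h)\esp{\ug{i}(\xscheme{i})-\widehat{\ca{V}}_{t_i}}^2$; solving for $\esp{Y_{t_i}-\ug{i}(\xscheme{i})}^2$ (here the factor $1/h\sim N$ first appears), iterating the backward recursion from $i$ to $N$ with $\ug{N}=g$, and invoking Propositions \ref{lemma:y-bound}, \ref{lemma:f-int-bound} together with \eqref{errores0} gives \eqref{step2}.

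Step 3 removes the still-uncontrolled $\sum_i\esp{\ug{i}(\xscheme{i})-\widehat{\ca{V}}_{t_i}}^2$: expand the loss $L_i(\theta)$ using the martingale representation \eqref{eq:rep}; the cross term vanishes since the $\ca{F}_{t_i}$-measurable part is orthogonal to the sum of martingale differences, and independence of $W$ and $\mu$ diagonalizes the noise term, so $L_i(\theta)$ splits as a $\theta$-independent part plus a quantity $\hat L_i(\theta)$. Two-sided Young/Lipschitz estimates for $\hat L_i$, optimality $\hat L_i(\theta^*)\le\hat L_i(\theta)$ for all $\theta$, and the definitions \eqref{errores} of $\ca{E}_i^v,\ca{E}_i^z,\ca{E}_i^\gamma$ (arbitrarily small by Theorem \ref{uat}) produce \eqref{eq:cota-imp}; substituting into \eqref{step2} and using Lemma \ref{lemma:y-bound} gives Lemma \ref{lemma:5.3}.

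Step 4 closes the loop on the $Z$ and $\Gamma$ errors, and I expect it to be the main obstacle. Using \eqref{eq:ortogonal2} and \eqref{eq: 4.11g} one reduces $\sum_i\bb{E}\int|\Gamma_t-\gb{\Gamma}_{t_i}|^2\,dt$ to $\varepsilon^\Gamma(h)+Ch+C\sum_i(\bb{E}(H_{i+1}^2)-\esp{\bb{E}_i(H_{i+1})}^2)$ (the $Z$-bound being imported from \cite{DBS}), so everything hinges on estimating $\bb{E}(H_i^2)-\esp{\bb{E}_i(H_{i+1})}^2$, where $H_i=Y_{t_i}-\ug{i}(\xscheme{i})$ as in \eqref{def_H}. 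Combining the upper and lower bounds \eqref{eq:4.13} and \eqref{eq:4.9} for $\esp{Y_{t_i}-\widehat{\ca{V}}_{t_i}}^2$ gives a self-referential inequality for $\bb{E}(H_i^2)$ in which, after telescoping as in \eqref{EHEH} and applying \eqref{eq:N-bound}, the term $N\max_i\esp{Y_{t_i}-\ug{i}(\xscheme{i})}^2$ reappears; one must pick $\gamma=3C$ and $h$ small so that $\tfrac C\gamma\tfrac{1+\gamma h}{1-h}\le\tfrac12$, use $Nh=T$ to absorb the surviving factor, and feed in \eqref{eq:cota-imp} and Lemma \ref{lemma:5.3}. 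Adding the resulting bound for $\sum_i\bb{E}\int(|Z_t-\widehat{\ca{Z}}_i(\xscheme{i})|^2+|\Gamma_t-\widehat{\prom{\ca{G}}}_{i}(\xscheme{i})|^2)\,dt$ to \eqref{step34} produces the stated inequality. The delicacy is exactly that the $N$ multiplying $\sum_i\ca{E}_i^v$ cannot be removed --- as the remark after \eqref{eq:N-bound} notes, even the $\delta$-refinement of Remark \ref{delta-remark} does not help --- so the constants must be tracked carefully enough that the only $N$-inflated contribution is the network approximation error, which is harmless since it can be driven to zero independently of $N$.
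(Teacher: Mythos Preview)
Your proposal is correct and follows essentially the same route as the paper's proof: the same four-step architecture, the same auxiliary processes $\widehat{\ca{V}}_{t_i},\gb{Z}_{t_i},\gb{\Gamma}_{t_i}$ and martingale representation \eqref{eq:rep}, the same orthogonality splittings \eqref{eq:ortogonal1}--\eqref{eq:ortogonal2} combined with the $\Delta W_i/\Delta M_i$ pairings, the same choices of $\gamma$ in Steps~1 and~4, and the same telescoping \eqref{EHEH} together with \eqref{eq:N-bound} to close Step~4. You have also correctly identified where the finiteness of $\lambda$ enters and why the factor $N$ in front of $\sum_i\ca{E}_i^v$ survives.
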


Note that the terms involving the NNs $\ca{E}_i^v$,  $\ca{E}_i^z$  and $\ca{E}_i^\gamma$, can be made arbitrarily small, in view of Theorem \ref{uat} and \eqref{NN_L2norm}.
The challenge here, and therefore in almost every DL algorithm, is that we don't know how many hidden layers
and units per layer we need to achieve a fixed tolerance, we only can ensure the existence of such NN architecture.\\

We state some remarks from the proof.

\begin{remark}
The main difficulty of the adaptation of the proof given in \cite{DBS}, was to give a useful definition of the third NN with the mission of approximate the non local component. This was problematic because we have two options, the first is to define the NN to approximate the whole integral 
        \[
            \int_{\Rd}[u(t_i,X_{t_i}^{\pi}+\beta(X_{t_i}^{\pi},y))-u(t_i,X_{t_i}^{\pi})]\lambda(dy),
        \]
        which seems intuitive because this will lead our third NN to approximate the nonlocal part of the PIDE and, therefore, receive one parameter: $\xscheme{i}$. But, we also need to approximate or been able to calculate the stochastic integral
        \[
            \int_{\Rd}[u(t_i,X_{t_i}^{\pi}+\beta(X_{t_i}^{\pi},y))-u(t_i,X_{t_i}^{\pi})]\bar{\mu}\parent{(t_i,t_{i+1}],dy},
        \]
        that cannot be done by just knowing the first integral. To overcome this issue, we proposed the idea to approximate just what it is inside the integrals and solve the problem of actually integrate this function with other tools.
\end{remark} 
\begin{remark}       
 The non local part of the PIDE (\ref{eq: pide}) makes us add a Lévy process, which is a canonical tool when dealing with non local operators such as the one that appears on equation (\ref{eq: pide}). This addition results in the natural definition of analogous objects from \cite{DBS} such as the $\Gamma, \bar{\Gamma}$ components for the nonlocal case.     
\end{remark}

 \begin{remark}
 Because of the finite character of the measure $\lambda$, the case of the Fractional Laplacian mentioned in the introduction is not contained in Theorem \ref{MT1}. We hope to extend our results to this case in a forthcoming result. 
 \end{remark}

\begin{remark}
    The result of the theorem states that the better we can approximate $v_i, z_i, \gamma_i$ by NN architectures, the better we can approximate $(Y_{t_i}, Z_{t_i}, \Gamma_{t_i})$ by $(\ug{i}(\xscheme{i}),\widehat{\ca{Z}}_{i}(\xscheme{i}),\prom{\widehat{\ca{G}}}_{i}(\xscheme{i}))$. 
\end{remark}

\subsection{Optimization step of the algorithm \label{sect:opti}} 

In this subsection we give a brief but complete description of how to compute the loss function from Algorithm \ref{algorithm1}.As usual, we extend the computation of the loss function shown on \cite{DBS} to our non local case. For simplicity assume that $\lambda$ is a probability measure absolutely continuous
with respect to Lebesgue measure. As we will see, several simulation of the Lévy process $(X_t)_t$ are needed.\\

Given $\ug{i+1}$, we need to minimize $L_i(\cdot)$ and define the NNs for step $i$. Recall the Definition \ref{eq:loss}, the idea is to write the expected value from the loss function as an average of simulations. Let $M\in\bb{N}$ and $I = \set{1,..., M}$, generate simulations $\set{x_k: k\in I}$, $\set{w_k:\in I}$ of $\xscheme{i}$ and $\Delta W_i$ respectively. Then,
\begin{align*}
    L_i(\theta)\approx\frac{1}{M}\sum_{k\in I}  \big(\ug{i+1}(x_k)-F(t_i,x_k,\ca{U}_i(x_k;\theta),\ca{Z}_i(x_k;\theta),\ca{G}_i(x_k,\cdot;\theta),h,w_i)\big)^2.
\end{align*}
Note that we are using an Euler scheme on the simulations of $(X_t)_t$, nevertheless, there exists other methods depending on the structure of the diffusion, see \cite{rk} and \cite{jump-adapted}. Recall that $F$ needs two different integrals of $\ca{G}_i(x_k,\cdot;\theta)$, to approximate these values let $L\in\bb{N}$ and $J=\set{1,...,L}$  and consider, for every $k\in I$, simulations $\set{y^k_{l}: l\in J}$ of a random variable $Y\sim\lambda$, here is important the finitness of the measure. Then, the quantities we need can be computed as follows,
\begin{align*}
    \int_{\Rd}\ca{G}_i(x_k,y;\theta)\lambda(dy) &\approx \frac{1}{L}\sum_{l\in J} \ca{G}_i(x_k,y^k_{l};\theta)\\
    \int_{\Rd}\ca{G}_i(x_k,y;\theta)\bar{\mu}\parent{(t_i,t_{i+1}],dy} &= \int_{t_i}^{t_{i+1}}\int_{\Rd}\ca{G}_i(x_k,y;\theta)\mu\parent{dt,dy} - \integral\int_{\Rd} \ca{G}_i(x_k,y;\theta)dt\lambda(dy) \\
    &\approx
    \sum_{t_i\le s<t_{i+1}} \ca{G}_i(x_k,\Delta P_s;\theta)\ind{\Rd} (\Delta P_s) - \frac{h}{L}\sum_{l\in J} \ca{G}_i(x_k,y^k_{l};\theta).
\end{align*}
Therefore, provided we can simulate: trajectories of $(X_t)_t$ and $(W_t)_t$, realizations of $Y\sim \lambda$ and the compound Poisson process $(P_s)_s$, we can minimize $L_i$, find the optimal $\theta^*$ and define
\begin{align*}
    (\ug{i},\widehat{\ca{Z}}_{i},\widehat{\ca{G}}_{i})=(\ca{U}_i(\cdot;\theta^*),\ca{Z}_i(\cdot;\theta^*),\ca{G}_i(\cdot,\circ;\theta^*)).
\end{align*}
\begin{remark}
    The nonlocal term in equation (\ref{eq: pide}) adds complexity not only in the proof of the consistency of the algorithm but in the algorithm itself. As we saw, it is key that the measure $\lambda$ is finite as well as the capability to simulate integrals with respect to Poisson random measures and trajectories of the Lévy process. The implementation of this method and an extension to PIDEs with more general integro-differential operators, such as fractional Laplacian, are left to future work. 
\end{remark}

\addcontentsline{toc}{section}{References}

\end{document}